\def\C{\mathcal{C}}
\def\D{\mathcal{D}}
\def\S{\mathcal{S}}
\def\P{\mathcal{P}}
\def\E{\mathcal{E}}
\def\T{\mathbb{T}}
\def\Tp{\mathbb{T}'}
\def\tbar{t(\vec{\bar{x}},\vec{x})}
\def\sbar{s(\vec{\bar{x}},\vec{x})}
\def\lb{\lbrace}
\def\rb{\rbrace}
\newcommand\frakfamily{\usefont{U}{yfrak}{m}{n}}
\DeclareTextFontCommand{\textfrak}{\frakfamily}
\theoremstyle{plain}\newtheorem{lemma}{Lemma}[section]
\theoremstyle{plain}\newtheorem{cor}[lemma]{Corollary}
\theoremstyle{plain}\newtheorem{theo}[lemma]{Theorem}
\theoremstyle{definition}
\theoremstyle{definition}\newtheorem{defin}[lemma]{Definition}
\theoremstyle{remark}\newtheorem{exam}[lemma]{Example}
\theoremstyle{remark}
\theoremstyle{definition}
\theoremstyle{definition}
\theoremstyle{definition}\newtheorem{cons}{Construction}
\theoremstyle{definition}
\theoremstyle{plain}\newtheorem{prop}[lemma]{Proposition}
\author{Dimitris Tsementzis}
\title{A syntactic characterization of Morita equivalence}
\begin{document}

\date{\today}
\keywords{Classifying Toposes, Morita Equivalence, Common Definitional Extension}
\subjclass[2010]{03G30, 18B25 (primary) and 03C52 (secondary)}

\address{Department of Philosophy, Princeton University}
\curraddr{1879 Hall, Princeton, NJ 08544, USA}
\email{dtsement@princeton.edu}

\maketitle

\begin{abstract}
We characterize Morita equivalence of theories in the sense of Johnstone in \cite{Elephant} in terms of a new syntactic notion of a common definitional extension developed by Barrett and Halvorson in \cite{Barrett} for cartesian, regular, coherent, geometric and first-order theories. This provides a purely syntactic characterization of the relation between two theories that have equivalent categories of models naturally in any Grothendieck topos.
\end{abstract}

\tableofcontents

\section*{Introduction}


In \cite{Elephant} Johnstone defines a signature-independent notion of equivalence between first-order theories, which he calls Morita equivalence (and which we will call J-Morita equivalence). 
With altogether different motivations in mind, Barrett and Halvorson in \cite{Barrett} have defined a purely syntactic notion to relate (for their purposes \emph{classical, first-order}) theories which they also call Morita equivalence (and which we will call T-Morita equivalence).
Roughly speaking, two theories are T-Morita equivalent if they have a common definitional extension (\cite{Hodges}, Chapter 2.6) where in addition to defining new function and relation symbols in terms of available formulas one is also allowed to define four new types of sort symbols: product sorts, coproduct sorts, subsorts and quotient sorts.

In this paper we prove an equivalence between the topos-theoretic notion of J-Morita equivalence and (appropriate generalizations of) the new syntactic notion of T-Morita equivalence.
As such we answer the following question: 
Suppose you are given an equivalence between the categories of models (natural in any cocomplete topos) of two (regular, cartesian, coherent or geometric) theories $\T$ and $\Tp$. Then from a purely syntactic point of view, how are $\T$ and $\Tp$ related? Our investigation may thus be seen as an inversion of the kind of investigation undertaken e.g. in \cite{AwoFors} and \cite{MakkaiSD, MakkaiDD}. There the question is asked: what extra structure do we need to impose on the category of models of a theory in order to recover the theory up to up to equivalence of its syntactic category? Here we ask: if we identify a theory with its category of models, then what can we recover it \emph{up to}? 

Our work here is also inspired by (and may be seen as a ``logical'' version of) I. Moerdijk's work \cite{Moerdijk1, Moerdijk2, Moerdijk3, Moerdijk4} on the representation theory of Grothendieck toposes (cf. \cite{Moerdijk5} for an overview).
In addition to the projects pursued by Barrett and Halvorson  \cite{Barrett, Halv1, Halv2} our work also has obvious connections with the research programme developed by Caramello \cite{Caram, Caram2012, Caram2012b, Caram2009} based around the idea of exploiting J-Morita equivalences between geometric theories to transfer mathematical results from one theory to another. More classical investigations along these lines can be found in \cite{Pinter, Mycielski} and connections with Shelah's work are explained nicely in \cite{Harnik}.
Furthermore, our syntactic characterization is very type-theoretic in flavour since the main innovation of the new syntactic notion of Morita equivalence is to allow one to define new sort symbols from old ones in much the same way that in type theories one defines new types from old ones, 
though we restrict ourselves here only to (fragments of) first-order logic which correspond more closely to what have been called ``logic-enriched type theories'' (e.g. \cite{PAT, Maietti, MaiSam, MoePal, GamAcz}.)
But none of them have been used, to the best of our knowledge, as tools for studying equivalences of classifying toposes of theories, which is our main concern here.
Nevertheless the technical connections are obvious and we plan to explore the type-theoretic versions of our results in future work.

\vspace{0.3cm}

\emph{Outline of the Paper.} In Section \ref{prelim}, we quickly go through some preliminaries, fixing notation.
In Section \ref{tj} we define the two above-described notions of Morita equivalence for coherent theories and then go on in Sections \ref{tmortojmor} and \ref{jmortotmor} to prove that these notions coincide in the coherent fragment of first-order logic (Theorem \ref{TiffJ}.) 
We then go on in Section \ref{gen} to consider how Theorem \ref{TiffJ} translates to other fragments of first-order logic as well as to full first-order logic.
In this latter case we prove Theorem \ref{morlTiffJ} which says that two first-order theories are T-Morita equivalent if and only if their Morleyizations are J-Morita equivalent. 
Finally, in Section \ref{appimp} we discuss some mathematical applications and sketch future directions of research.

\section{Preliminaries}\label{prelim}

We assume familiarity with first-order categorical logic as can be found e.g. in \cite{MakkaiReyes} and D1 of \cite{Elephant}. We also assume some familiarity with the theory of classifying toposes as described e.g. in \cite{MakkaiReyes}, \cite{Elephant} or Chapter VIII of \cite{MM}. Throughout this paper our terminology and notation will follow \cite{Elephant} and our deductive system can be taken to be exactly the one presented there. We will here review a few standard concepts and make explicit some notational conventions that will be of use to us in what follows.

Since we will mainly be dealing with fragments of first-order logic which do not necessarily include implication, negation and universal quantification we will employ turnstile ($\vdash$) notation for the sequents we consider. So a given sequent $\sigma$ over a signature $\Sigma$ will be written as
$\phi \vdash_{\vec{x}} \psi$
where $\phi, \psi$ are $\Sigma$-formulas and $\vec{x}$ is a context of variables appropriate to both of them. We will often omit explicit mention of the variables binding our sequent when it is clear from the context. For a given sequent $\sigma$ over a signature $\Sigma$ and theory $\T$ also over $\Sigma$ we write
$\T \models \sigma$
to indicate that the sequent is derivable from (the sequents defining) $\T$. As usual, a theory $\T$ is identified with a set of sequents (the deductive closure of the axioms relative to our deductive system.) We will also sometimes separate sequents by commas if we want to list several sequents that a theory contains, e.g. $\T \models \sigma_1 , \dots, \sigma_n$ and we will also use the symbol $\equiv$ where necessary to indicate that two sequents  or formulas are (grammatically) identical.
We will usually abbreviate the variable contexts of sequents whenever these are clear from the context.
Similarly we will often write a single formula for a conjunction of formulas depending on variables ranging over some index. 
Furthermore whenever we are considering two signatures $\Sigma_1, \Sigma_2$ with $\Sigma_2 \supset \Sigma_1$ we will will write $\Sigma_2 \setminus \Sigma_1 \textbf{-Sort}$ for $\Sigma_2\textbf{-Sort} \setminus \Sigma_1 \textbf{-Sort}$ and similarly for $\textbf{Fun}$ and $\textbf{Rel}$.
We will also use the notation $\sigma_S$ to denote the set of sequents defining a symbol $S \in \Sigma_2 \setminus \Sigma_1$. 

Since our system contains the cut rule we will use the following convention: Given a theory $\T$ over a signature $\Sigma$, and $\Sigma$-formulas $\phi_1, \phi_2, ... \phi_n$ such that
$
\T \models \phi_1 \dashv \vdash \phi_2,
 \phi_2 \dashv \vdash \phi_3,
\dots,
 \phi_{n-1} \dashv \vdash \phi_n
$
we will write
$
\T \models \phi_1 \dashv \vdash \phi_2 
 \dashv \vdash \phi_3 
\dots  
 \dashv \vdash \phi_n
$
to indicate the derivation by $\T$ of (the two sequents represented by) $\phi_1 \dashv \vdash \phi_n$, i.e. that $\T \models \phi_1 \vdash \phi_n, \phi_n \vdash \phi_1$. When we do so we will usually justify each step one formula at a time. 

We will mainly be concerned with the \emph{coherent} fragment of first-order logic and our main results will be proved in detail for (constructive) coherent logic.
When we consider other fragments of first-order logic in Section \ref{gen} then the deductive system we have in mind will contain only the rules for the relevant connectives.


Secondly, our system includes the following two rules as axioms
\begin{align*}
\phi \wedge (\psi \vee  \chi) &\dashv \vdash_{\vec{x}} (\phi \wedge \psi) \vee (\phi \wedge \chi) \label{eq:dist} \tag{\text{Dist}} \\
\phi \wedge (\exists y \psi) &\dashv \vdash_{\vec{x}} \exists y(\phi \wedge \psi) \label{eq:frob} \tag{\text{Frob}}
\end{align*}
where in (\ref{eq:frob}) $y$ does not appear among the $\vec{x}$. 
We will also have occasion to refer to the following derived rules (whenever they make sense):
\[
x=z \dashv \vdash_{x,z \colon S} \exists y \colon S (x=y \wedge y=z) \label{etran} \tag{$\exists$-tran}
\]
\[
\exists \vec{x} (\phi \vee \psi) \dashv \vdash_{\vec{x}} \exists \vec{x} \phi \vee \exists \vec{x} \psi \label{evee} \tag{$\exists \vee$}
\]
where $\phi$ and $\psi$ are formulas with (some) free variables among the $\vec{x}$. 

%

Finally, recall 
that given a theory $\T$ over a signature $\Sigma$ we define the syntactic category $\C_\T$ of $\T$ to be the category whose objects are $\Sigma$-formulas-in-context $\vec{x}.\phi$ up to renaming of variables
 and whose arrows are $\T$-provable equivalence classes of $\T$-provably functional relations between such formulas. More precisely, a morphism $[\theta] \colon \lb \vec{x}. \phi \rb \rightarrow \lb \vec{y}. \psi \rb$ is a $\Sigma$-formula $\theta$ in the context $\vec{x},\vec{y}$ such that:
\[
\T \models \theta \vdash_{\vec{x},\vec{y}} \phi \wedge \psi,  \theta \wedge \theta[\vec{z}/\vec{y}] \vdash_{\vec{x},\vec{y},\vec{z}} \vec{z} = \vec{y},
\phi \vdash_{\vec{x}} \exists \vec{y} \theta(\vec{x}, \vec{y})
\]
Depending on the logical complexity of the theory $\T$ the syntactic category $\C_\T$ carries the appropriate logical structure. This if $\T$ is coherent (resp. cartesian, regular, geometric) then $\C_\T$ is a coherent (resp. cartesian, regular, geometric) category (cf. \cite{Elephant}, D1.4.10 and D1.4.2). With this in mind let us record the following well-known lemmas that we will often refer to in the rest of the paper:


\begin{lemma}[\cite{Elephant}, D1.4.4(iv)]
Every subobject of an object $\lb \vec{x}. \phi \rb$ in $\C_\T$ is isomorphic to one of the form 
\[
\lb \vec{x} . \psi \rb \overset{[\psi]}{\hookrightarrow} \lb \vec{x}. \phi \rb
\]
where $\T \models \phi \vdash_{\vec{x}} \psi$. Furthermore $\lb \vec{x}. \psi \rb \leq \lb \vec{x}. \chi \rb$ as subobjects of $\lb \vec{x}. \phi \rb$ if and only if $\T \models \psi \vdash_{\vec{x}} \chi$.
\end{lemma}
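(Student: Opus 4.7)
My plan is to establish the two assertions in turn, both directly from the definition of $\C_\T$. For the first, given an arbitrary mono $[\theta] \colon \lb \vec{y}. \chi \rb \hookrightarrow \lb \vec{x}. \phi \rb$, I would set $\psi(\vec{x}) :\equiv \exists \vec{y}\, \theta(\vec{y}, \vec{x})$ and show that the canonical functional relation represented by $\psi(\vec{x}) \wedge \vec{x} = \vec{x}'$ presents a subobject $\lb \vec{x}. \psi \rb \hookrightarrow \lb \vec{x}. \phi \rb$ isomorphic to $[\theta]$. The containment $\psi \vdash_{\vec{x}} \phi$ follows from the clause $\theta \vdash_{\vec{y},\vec{x}} \chi \wedge \phi$ in the definition of $[\theta]$ together with existential elimination. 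The three functional-relation clauses for $\psi(\vec{x}) \wedge \vec{x} = \vec{x}'$ are essentially trivial because the output coordinate is pinned down by an equality; monicity is immediate for the same reason.

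To exhibit the isomorphism $\lb \vec{y}. \chi \rb \cong \lb \vec{x}. \psi \rb$ I would use $\theta$ itself, reinterpreted as a morphism from $\lb \vec{y}. \chi \rb$ to $\lb \vec{x}. \psi \rb$. The three functional-relation clauses transfer: $\theta \vdash \chi \wedge \psi$ is immediate from the definition of $\psi$; functionality in the second variable is inherited from the original $[\theta]$; and totality ($\chi \vdash \exists \vec{x}\, \theta$) is also inherited. One composite with $[\theta]$ reduces to $[\theta]$ itself, which means the two subobjects coincide; the other composite is the identity by monicity of $[\theta]$ applied to the obvious equation.

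For the second assertion, if $\T \models \psi \vdash_{\vec{x}} \chi$, then $\psi(\vec{x}) \wedge \vec{x} = \vec{x}'$ serves equally well as a functional relation into $\lb \vec{x}.\chi \rb$ and provides the factorization exhibiting $\lb \vec{x}.\psi \rb \leq \lb \vec{x}.\chi \rb$. Conversely, if such a factorization exists via a functional relation $[\alpha]\colon \lb \vec{x}.\psi \rb \to \lb \vec{x}.\chi \rb$, then composing with the canonical embedding $\lb \vec{x}.\chi \rb \hookrightarrow \lb \vec{x}.\phi \rb$ must agree (as a subobject representative) with $\psi(\vec{x}) \wedge \vec{x} = \vec{x}'$, forcing $\alpha \vdash \vec{x} = \vec{x}'$ modulo $\T$. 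Combined with $\alpha \vdash \chi[\vec{x}'/\vec{x}]$ and totality $\psi \vdash \exists \vec{x}'\, \alpha$, existential elimination yields $\psi \vdash_{\vec{x}} \chi$.

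The only real obstacle I anticipate is bookkeeping the variable contexts cleanly, especially reinterpreting $\theta$ in two different roles in the first part and keeping track of the fresh-copy variables $\vec{x}'$ used to realize the canonical embeddings; no conceptual difficulty beyond careful application of (\ref{eq:frob}) and existential elimination is required.
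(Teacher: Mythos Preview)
The paper does not supply its own proof of this lemma: it is quoted verbatim from Johnstone's \emph{Elephant} (D1.4.4(iv)) and used as a standing fact, so there is nothing to compare your argument against. Your approach---taking $\psi(\vec{x}) :\equiv \exists \vec{y}\,\theta(\vec{y},\vec{x})$ as the image of the given mono and then checking that $\theta$ itself furnishes the isomorphism $\lb \vec{y}.\chi\rb \cong \lb \vec{x}.\psi\rb$---is the standard one and is correct.

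Two small remarks. First, note that the sequent in the paper's statement is written backwards: for $[\psi]$ to be a mono into $\lb \vec{x}.\phi\rb$ one needs $\T \models \psi \vdash_{\vec{x}} \phi$, not $\phi \vdash_{\vec{x}} \psi$; you correctly use the right direction in your argument. Second, your sentence ``one composite with $[\theta]$ reduces to $[\theta]$ itself, which means the two subobjects coincide'' only establishes $\lb \vec{y}.\chi\rb \leq \lb \vec{x}.\psi\rb$; for the reverse inequality you should explicitly exhibit $\theta$ read in the opposite direction as a morphism $\lb \vec{x}.\psi\rb \to \lb \vec{y}.\chi\rb$, where totality is the definition of $\psi$ and single-valuedness in $\vec{y}$ is exactly the monicity of the original $[\theta]$. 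This is presumably what you meant by ``the other composite is the identity by monicity,'' but it deserves to be spelled out.
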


\begin{lemma}[\cite{Elephant}, D1.4.10]
\begin{enumerate}
\item If $\T$ is at least a regular theory, then given a morphism $[\theta] \colon \lb \vec{x}. \phi \rb \rightarrow \lb \vec{y}. \psi \rb$ in $\C_\T$ its image is given by the subobject $\lb \vec{y}. \exists \vec{x} \theta \rb$ of $\lb \vec{y}. \psi \rb$.
\item If $\T$ is at least a regular theory, then a morphism $[\theta] \colon \lb \vec{x}. \phi \rb \rightarrow \lb \vec{y}. \psi \rb$ in $\C_\T$ is a regular epi if and only if $\T \models \psi \vdash_{\vec{y}} \exists \vec{x} \theta$.
\item If $\T$ is at least a coherent category, and $\lb \vec{x}. \psi \rb$, $\lb \vec{x}. \chi \rb$ are two subobjects of $\lb \vec{x}. \phi \rb$ then their sup is given by the subobject $\lb \vec{x}. \psi \vee \chi \rb$.
\end{enumerate}
\end{lemma}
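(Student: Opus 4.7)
The plan is to treat the three parts in turn, using the previous lemma (D1.4.4(iv)) throughout to translate categorical statements about subobjects of $\lb \vec{x}. \phi \rb$ into syntactic statements about formulas $\psi$ such that $\T \models \psi \vdash_{\vec{x}} \phi$, and to translate inclusions of subobjects into provable entailments.

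For part (1), I would verify directly that $\lb \vec{y}. \exists \vec{x} \theta \rb$ satisfies the universal property of the image. First I would exhibit a factorization: the candidate arrow $\lb \vec{x}. \phi \rb \to \lb \vec{y}. \exists \vec{x} \theta \rb$ is again represented by $\theta$, which is legal because $\T \models \theta \vdash_{\vec{x},\vec{y}} \exists \vec{x} \theta$ is trivial and the three functionality conditions for $\theta$ as a morphism to $\lb \vec{y}. \exists \vec{x} \theta \rb$ reduce, via the original functionality of $\theta$ and the definition of $\exists$, to the ones already assumed. Composing with the inclusion $\lb \vec{y}. \exists \vec{x} \theta \rb \hookrightarrow \lb \vec{y}. \psi \rb$ recovers $[\theta]$. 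For minimality, suppose $[\theta]$ factors through some subobject $\lb \vec{y}. \chi \rb \hookrightarrow \lb \vec{y}. \psi \rb$ via some $[\eta]$. Then $\eta$ is a morphism into $\lb \vec{y}. \chi \rb$ and composing with the inclusion yields a provably functional relation equivalent to $\theta$; in particular $\T \models \theta \vdash_{\vec{x},\vec{y}} \chi$. Existentially quantifying over $\vec{x}$ gives $\T \models \exists \vec{x} \theta \vdash_{\vec{y}} \chi$, which by the previous lemma is exactly the required inclusion of subobjects.

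For part (2), I would simply combine (1) with the characterization of regular epis as morphisms whose image is the entire codomain. By (1), $[\theta]$ is a regular epi iff $\lb \vec{y}. \exists \vec{x} \theta \rb = \lb \vec{y}. \psi \rb$ as subobjects, and by the previous lemma this equality of subobjects corresponds to mutual entailment $\T \models \exists \vec{x} \theta \dashv \vdash_{\vec{y}} \psi$. One direction, $\exists \vec{x} \theta \vdash_{\vec{y}} \psi$, is automatic from $\theta \vdash_{\vec{x},\vec{y}} \psi$ (which is part of functionality), so the nontrivial condition is precisely $\T \models \psi \vdash_{\vec{y}} \exists \vec{x} \theta$.

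For part (3), I would verify the universal property of the join directly using the usual introduction and elimination rules for disjunction. The subobject $\lb \vec{x}. \psi \vee \chi \rb$ contains both $\lb \vec{x}. \psi \rb$ and $\lb \vec{x}. \chi \rb$ since $\T \models \psi \vdash_{\vec{x}} \psi \vee \chi$ and $\T \models \chi \vdash_{\vec{x}} \psi \vee \chi$. For minimality, if $\lb \vec{x}. \eta \rb$ is a subobject of $\lb \vec{x}. \phi \rb$ containing both, i.e.\ $\T \models \psi \vdash_{\vec{x}} \eta$ and $\T \models \chi \vdash_{\vec{x}} \eta$, then the disjunction elimination rule yields $\T \models \psi \vee \chi \vdash_{\vec{x}} \eta$, which is again the required inclusion. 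The only step requiring mild care is checking that $\psi \vee \chi$ actually defines a subobject of $\lb \vec{x}. \phi \rb$, which follows because both disjuncts entail $\phi$, so $\T \models \psi \vee \chi \vdash_{\vec{x}} \phi$ by disjunction elimination.

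I expect no real obstacle: each part is a direct translation between categorical universal properties and coherent-logic derivations. The one place to be careful is the minimality argument in (1), where I need to be sure that the factorization $[\eta]$ through $\lb \vec{y}. \chi \rb$ yields $\T \models \theta \vdash_{\vec{x},\vec{y}} \chi$ (rather than a weaker provable equivalence of functional relations); this is handled cleanly by unpacking composition of morphisms in $\C_\T$ via existential quantification and then using uniqueness of the functional relation representing the composite.
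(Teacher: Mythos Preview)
Your proof sketch is correct and is the standard way to verify these facts directly from the definition of $\C_\T$ and the rules of regular/coherent logic. However, there is nothing to compare it against: the paper does not prove this lemma at all but simply records it as a well-known fact, citing \cite{Elephant}, D1.4.10. So your proposal supplies a proof where the paper gives none.
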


\section{T-Morita and J-Morita}\label{tj}

We now define the two different notions of Morita equivalence that we will go on to relate. 
In this and the next section we will restrict ourselves only to coherent theories. 


\begin{defin}
We call two coherent theories $\T$ and $\Tp$ \textbf{J-Morita equivalent} if they have equivalent classifying toposes.
\end{defin}





Now let $\T_1$ be a coherent theory over a signature $\Sigma_1$ and let $\T_2$ be a theory over a signature $\Sigma_2 \supset \Sigma_1$. 
We say that $\T_2$ is a \emph{definitional extension} (cf. \cite{Hodges}) of $\T_1$ if every symbol in $\Sigma_2 \setminus \Sigma_1$ is explicitly defined in $\T_1$ by a coherent $\Sigma_1$-formula. There are also admissibility conditions when defining new function or relation symbols (cf. \cite{Hodges} or \cite{Barrett}). 
Following \cite{Barrett} we define the following four sets of sequents, expressing the definability of certain new sorts in the expanded signature $\Sigma_2$ in terms of $\Sigma_1$. 

%

We say that $S_1 \times S_2 \times \dots \times S_n \in \Sigma_2\textbf{-Sort}$ (which we also write as $\prod_{i=1}^n S_i$) is a \emph{product sort} of $S_1, S_2, ... , S_n \in \Sigma\textbf{-Sort}$ with projections $\pi_i \colon S_1 \times... \times S_n \rightarrow S_i \in \Sigma_2\textbf{-Sort}$ for $i = 1,\dots,n$ if $\T_2$ contains the following sequents:
\[
\top \vdash_{x_i \colon S_i} \exists x \colon \prod_{i=1}^n S_i (\pi_1 (x) = x_1 \wedge ... \wedge \pi_n (x) = x_n) \label{eq:prod1} \tag{\text{prod$_1$}}
\]
\[
(\bigwedge_{i=1}^n \pi_i (x) = x_i) \wedge (\bigwedge_{i=1}^n \pi_i (z) = x_i ) \vdash_{x_1,...,x_n,x,z} x=z \label{eq:prod2} \tag{\text{prod$_2$}}
\]

We say that $S_1 \amalg S_2 \dots \amalg S_n  \in \Sigma_2\textbf{-Sort}$ is a \emph{coproduct sort} of $S_1, S_2, \dots, S_n \in  \Sigma_1\textbf{-Sort}$ with injections $\rho_i \colon S_i \rightarrow S_1 \amalg \dots \amalg S_n \in  \Sigma_2\textbf{-Fun}$ for $i=1,\dots,n$ 
if $\T_2$ contains the following sequents:
\[
\top \vdash_{x \colon \coprod_{i=1}^n S_i} \bigvee_{i=1}^n \exists x_i \colon S_i (\rho_i (x_i) = x) \label{eq:cop1} \tag{\text{cop$_1$}}
\]
\[
\rho_i (x_i) = x \wedge \rho_i (x'_i) = x \vdash_{x_i,x'_i,x} x_i = x'_i \text{ for all $i=1,...,n$}\label{eq:cop2} \tag{\text{cop$_2$-$i$}}
\]
\[
\rho_i (x_i) = x \wedge \rho_j (x_j) = x \vdash_{x_i \colon S_i,x_j \colon S_j x} \bot \text{ for all $i \neq j \in \lb 1,...,m \rb$}  \label{eq:cop3} \tag{\text{cop$_3$}}
\]
We say that $S \in \Sigma_2\textbf{-Sort}$ is a \emph{subsort} of a sort $T \in \Sigma_1\textbf{-Sort}$ defined by a coherent $\Sigma_1$-formula $\phi$ and a function symbol $i \colon S \rightarrow T \in \Sigma_2\textbf{-Sort}$ if $\T_2$ contains the following sequents:
\[
\phi(x) \dashv \vdash_{x \colon T} \exists y \colon S (i(y)=x) \label{eq:sub1} \tag{\text{sub$_1$}}
\] 
\[
i(x)=i(y) \vdash_{x,y\colon S} x=y \label{eq:sub2} \tag{\text{sub$_2$}}
\]
We say that $S \in \Sigma_2\textbf{-Sort}$ is a \emph{quotient sort} of $T \in \Sigma_1\textbf{-Sort}$ defined by a  $\T_1$-provable equivalence relation $\phi$ and a function symbol $\epsilon \colon T \rightarrow S \in \Sigma_2\textbf{-Sort}$ if $\T_2$ contains the following sequents:
\[
\epsilon (x) = \epsilon (y) \dashv \vdash_{x,y \colon T} \phi (x,y) \label{eq:quot1} \tag{\text{quot$_1$}}
\]
\[
\top \vdash_{x \colon S} \exists y \colon T (\epsilon (y) = x) \label{eq:quot2} \tag{\text{quot$_2$}}
\]

It is important to note that if $S$ is defined as a subsort of $T$, then $\phi$ is allowed to be $\bot$, i.e. we allow ourselves to define an ``empty'' subsort. (This automatically commits to a semantics that permits the interpretation of sorts as empty sets or initial objects.)
We also make the following restriction: new coproduct, product, subsorts and quotient sorts can only be defined on sorts $S_i$ that our base theory proves are non-empty, i.e. 
$\T \models \top \vdash \exists x_i \colon S_i \top$. 
(Whether a subsort is empty or not, of course, is a substantial question, since it implies that a certain sequent is derivable.) 
For the sake of Section \ref{jmortotmor} we must make one further very minor restriction: all the theories $\T$ we consider will be such that their signature contains at least one sort symbol $S$ that $\T$ proves is inhabited, i.e. such that $\T \models \top \vdash \exists x \colon S (\top)$. We require this in order always to be able to define a ``singleton sort'' as will be explained in more detail there.

\begin{defin}[Morita extension, \cite{Barrett}] \label{morex}
Let $\T_1$ and $\T_2$ be coherent theories over signatures $\Sigma_1$ and $\Sigma_2$ respectively. We say that $\T_2$ is a \textbf{Morita extension} of $\T_1$ if $\T_2$ is a definitional extension of $\T_1$ and every symbol in $\Sigma_2 \setminus \Sigma_1 \textbf{-Sort}$ is either a product sort, a coproduct sort, a quotient sort or a subsort and $\T_2$ contains in each case the appropriate sequents as defined above.
\end{defin}


The following proposition is immediate from the definitions.

\begin{prop}\label{cemor}
If $\T_2$ is a Morita extension of $\T_1$ then $\T_2$ is a conservative extension of $\T_1$.
\end{prop}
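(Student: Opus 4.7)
The plan is to prove conservativity via a model-theoretic expansion argument, using completeness for coherent logic (Deligne's theorem, or equivalently the syntactic-semantic correspondence through the classifying topos). Specifically, I will show that every $\T_1$-model in any Grothendieck topos (more generally, any coherent category with the relevant colimits) extends canonically to a $\T_2$-model whose $\Sigma_1$-reduct is the original model. From this the conservativity is immediate: if $\sigma$ is a $\Sigma_1$-sequent with $\T_2 \models \sigma$, then for any $\T_1$-model $M$ in any such topos, the expansion $M^+$ satisfies $\sigma$, and since $\sigma$ only mentions $\Sigma_1$-symbols, $M \models \sigma$ as well; completeness for coherent theories then gives $\T_1 \models \sigma$.

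The main step is the construction of $M^+$. I would work through the four sorts of new sort symbols in turn. For a product sort $\prod_i S_i$, interpret it as the categorical product of the $M(S_i)$ with $\pi_i$ as the canonical projections; the sequents (\ref{eq:prod1}) and (\ref{eq:prod2}) are then just the universal property of the product. For a coproduct sort, interpret it as the coproduct, with $\rho_i$ as the coproduct injections; the sequents (\ref{eq:cop1})--(\ref{eq:cop3}) then express exactly that finite coproducts in a coherent category are disjoint and their coprojections jointly cover, which holds in every coherent category. For a subsort defined by $\phi$, take $M^+(S) = \llbracket \vec{x}.\phi \rrbracket_M \hookrightarrow M(T)$ with $i$ the inclusion; then (\ref{eq:sub1}) and (\ref{eq:sub2}) are automatic. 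For a quotient sort defined by a $\T_1$-provably equivalence relation $\phi$, take $M^+(S)$ to be the quotient of $M(T)$ by $\llbracket \phi \rrbracket_M$ (which exists in any coherent category, since coherent categories have quotients of equivalence relations); again (\ref{eq:quot1}) and (\ref{eq:quot2}) follow. Once the new sorts are interpreted, the new function and relation symbols are interpreted using their explicit coherent definitions, and the defining sequents $\sigma_S$ are satisfied by construction.

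The main obstacle is really one of bookkeeping: the definitions of the new function and relation symbols in $\Sigma_2 \setminus \Sigma_1$ may refer to new sort symbols (e.g.\ a function valued in a product sort), so the interpretations of symbols must be built up in a suitable order, respecting the dependency order inherited from the definitional nature of $\T_2$ over $\T_1$. This is a standard issue with iterated definitional extensions and is handled by induction on the stage at which a symbol is introduced (which is well-founded by the definition of definitional extension). The restriction that new sorts are only built from provably inhabited $S_i$'s plays no essential role here, though it ensures that the Morita-style sequents remain well-typed in the empty context, and it is convenient for the converse constructions of later sections.

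Once the expansion $M^+$ is built, the completeness argument is standard: if $\T_2 \models \sigma$ for a $\Sigma_1$-sequent $\sigma$, apply this to the universal (generic) $\T_1$-model in the classifying topos of $\T_1$. Since that generic model is jointly conservative for $\Sigma_1$-sequents under $\T_1$, we conclude $\T_1 \models \sigma$. Alternatively, one may phrase everything internally in $\C_{\T_1}$: the expansion procedure defines a coherent functor $\C_{\T_2} \to \C_{\T_1}$ splitting the canonical inclusion $\C_{\T_1} \to \C_{\T_2}$, which directly forces conservativity at the syntactic level.
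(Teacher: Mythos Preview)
Your argument is correct and supplies what the paper omits: the paper records no proof at all, only the remark that the proposition is ``immediate from the definitions.'' The model-expansion strategy you outline---interpret each new product, coproduct, sub- and quotient sort by the corresponding universal construction in a Grothendieck topos, then invoke completeness via the generic $\T_1$-model in its classifying topos---is the standard way to cash out that ``immediate,'' and it goes through as you describe.

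Two small points deserve correction. First, your parenthetical that quotients of equivalence relations ``exist in any coherent category'' is false: coherent categories are regular with stable finite unions of subobjects, but need not be Barr-exact. This does no damage to your main argument, since you are working in Grothendieck toposes, which are exact and cocomplete. Second, and for the same reason, your alternative formulation at the end---a coherent functor $\C_{\T_2} \to \C_{\T_1}$ splitting the inclusion---does not work as stated: $\C_{\T_1}$ is only a coherent category, so in general there is nowhere to send a coproduct sort $S_1 \amalg S_2$ or a quotient sort $T/{\sim}$. To make that version go one must land in the pretopos completion $\P_{\T_1}$ (or in the classifying topos), which is exactly your primary argument rephrased.
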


Given this definition of a Morita extension we may say that a \emph{(finite) Morita span} between two coherent theories $\T$ and $\T'$ is given by (finite) collections of theories
$\T=\T_0, \T_1,..., \T_n$ and $\Tp=\Tp_0, \Tp_1,..., \Tp_m$ such that for each $i=1,...,n$ each $\T_i$ is a Morita extension of $\T_{i-1}$ and for each $j=1,...,m$ each $\T_j$ is a Morita extension of $\T_{j-1}$ and also that $\T_n = \T_m$ where $(=)$ is to be understood as \emph{logical equivalence}: each of the axioms of one theory are derivable from the axioms of the other. We will also call a (finite) series $\T=\T_0, \T_1,..., \T_n$ of theories all of which are successively Morita extensions of each other a \emph{(finite) Morita chain} from $\T$ to $\T_n$.
We call two theories $\T$ and $\Tp$ \emph{pre-T-Morita equivalent} if they can be connected by a Morita span -- this is essentially the notion found in \cite{Barrett}.
The reason we don't define T-Morita equivalence as pre-T-Morita equivalence is because the latter fails to be transitive (as a relation between theories). 
To fix this we take the transitive closure of pre-T-Morita equivalence as our notion of T-Morita equivalence. 

\begin{defin}\label{Tmor}
We call two theories $\T$, $\Tp$ (over signatures $\Sigma$, $\Sigma'$) \textbf{T-Morita equivalent} if and only if there is a theory $\T''$ over a signature $\Sigma''$ such that $\Sigma \cap \Sigma'' = \varnothing$ and $\Sigma''$ differs from $\Sigma''$ only in renaming the symbols in $\Sigma \cap \Sigma'$ (keeping their arities fixed in the case of function and relation symbols) and $\T''$ differs from $\T$ only in replacing the symbols in $\Sigma'$ with symbols in $\Sigma''$ and $\T''$ is pre-T-Morita equivalent to $\T$.
\end{defin}


The following then follows easily.

\begin{prop}\label{Tmorer}
T-Morita equivalence is an equivalence relation on the class of (coherent) theories.
\end{prop}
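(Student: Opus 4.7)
The claim is an equivalence-relation statement; reflexivity and symmetry are essentially formal, given the renaming clause of Definition~\ref{Tmor}, and transitivity is the substantive content. For reflexivity, I would take $\Sigma''$ to be a fresh disjoint copy of $\Sigma$ and construct a witness of pre-T-Morita equivalence between $\T$ and its renamed copy $\T''$ as follows: for each sort $S \in \Sigma$ introduce the renamed sort $S' \in \Sigma''$ as a quotient sort of $S$ by the identity equivalence relation, so that the accompanying $\epsilon_S \colon S \to S'$ is forced by (\ref{eq:quot1}) and (\ref{eq:quot2}) to be a provable bijection, then introduce the renamed function and relation symbols as definitional extensions using these bijections; the resulting theory is simultaneously a Morita extension of $\T$ and of $\T''$. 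Symmetry is immediate, since the reversal of a Morita span is again a Morita span, and the renaming clause of Definition~\ref{Tmor} can be re-performed in the opposite direction, applying a further fresh renaming if disjointness from $\Sigma'$ is needed.

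For transitivity, suppose $\T$ is T-Morita equivalent to $\Tp$ and $\Tp$ is T-Morita equivalent to $\T^{(3)}$. Unfolding Definition~\ref{Tmor} twice yields two Morita spans: a chain from $\T$ up to some $\T_n$ meeting a chain down from a renamed copy $\Tp^{r}$ of $\Tp$, and a chain from $\Tp$ up to some $\Tp'_k$ meeting a chain down from a renamed copy $\T^{(3),r}$ of $\T^{(3)}$. The plan is to \emph{amalgamate} these two spans over $\Tp$. Using the renaming freedom of Definition~\ref{Tmor}, I first arrange that all new symbols introduced along the chain $\Tp \to \Tp'_1 \to \dots \to \Tp'_k$ are disjoint from those introduced along (the preimage under the first renaming of) $\Tp \to \Tp_1 \to \dots \to \Tp_m \simeq \T_n$. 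I then \emph{transport} the chain $\Tp \to \Tp'_1 \to \dots \to \Tp'_k$ on top of $\T_n$ step by step, copying each new sort together with its defining sequents in the same order. The result is a common Morita extension $\T^*$ of $\T_n$ and $\Tp'_k$; gluing it with the two outer Morita chains produces a Morita span from $\T$ to a suitable renaming of $\T^{(3)}$, witnessing T-Morita equivalence.

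The main obstacle is verifying that each step of the transported chain is genuinely a Morita extension. Two conditions must be checked: (a) the base sorts and the defining formula of the new symbol still belong to the enlarged signature, which holds because $\T_n$ contains a copy of $\Tp$; and (b) the non-emptiness side condition $\top \vdash \exists x \colon S_i \top$ required of each base sort $S_i$ survives the passage from $\Tp$ to the enlarged base. The latter is the critical input and is supplied by Proposition~\ref{cemor}: since Morita extensions are conservative, any inhabitation sequent provable in the original chain over $\Tp$ remains provable in $\T_n$, so every step of the transported chain satisfies the side condition of Definition~\ref{morex}. Once this is in hand, the amalgamation reduces to a routine induction on the length of the second Morita chain.
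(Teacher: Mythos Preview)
The paper gives no proof of this proposition; it asserts only that it ``follows easily'' from Definition~\ref{Tmor}, which was explicitly introduced as a renaming-robust version of the transitive closure of pre-T-Morita equivalence. Your argument is therefore far more detailed than anything the paper supplies, and its substantive part --- amalgamating two Morita spans over their common vertex by transporting one Morita chain on top of the apex of the other, and invoking conservativity (Proposition~\ref{cemor}) to verify that the inhabitation and admissibility side conditions survive the transport --- is correct and is presumably what the author has in mind.

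Two small remarks on your reflexivity construction. First, the claim that the resulting theory is ``simultaneously a Morita extension of $\T$ and of $\T''$'' is essentially right, but the mechanism deserves to be made explicit: the sequents (\ref{eq:quot1}) and (\ref{eq:quot2}) for $S'$ as the quotient of $S$ by the identity relation coincide literally with (\ref{eq:sub1}) and (\ref{eq:sub2}) for $S$ as the subsort of $S'$ cut out by $\top$, so the very same axioms exhibit the theory as a Morita chain over $\T$ (reading each $S'$ as a quotient) and over $\T''$ (reading each $S$ as a subsort), with the function and relation symbols handled in a second step on each side. Second, the paper's side condition that new sorts may only be built on provably inhabited base sorts means your quotient construction does not literally apply to a sort $S$ with $\T \not\models \top \vdash \exists x\colon S\,\top$; the standard patch is to realise $S'$ instead as a subsort of a coproduct with a fixed inhabited sort. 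This is routine, and the paper's ``follows easily'' suggests the author did not regard it as an obstacle.
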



\section{T-Morita to J-Morita}\label{tmortojmor}


Let us outline the general strategy of the proof. Let $\T_1$ and $\T_2$ be coherent theories. The coherent Grothendieck topology on their syntactic categories gives us sites $(\C_{\T_1},J_1)$ and $(\C_{\T_2},J_2)$ and the topos of sheaves on those sites gives us the classifying toposes of $\T_1$ and $\T_2$. So what we care about is whether these two sites give rise to equivalent categories of sheaves.
A version of Verdier's ``Comparison Lemma'' from SGA4 which provides a sufficient condition for this to be the case and we will prove that this condition is satisfied on the assumption that $\T_2$ is a Morita extension of $\T_1$. 


Let $(\C,J)$ be any site and let $i \colon \D \hookrightarrow \C$ be a full and faithful functor. Then there is a topology $J_{\D}$ on $\D$ which we call the \emph{induced topology} defined for every $A$ in $\D$ by
$
J_\D (A) = J(A) \cap \text{Mor}(\D)
$. We call $(\D,J_{\D})$ the \emph{induced site}.
There is an induced functor $i^* \colon \textbf{Sh}(\C,J) \rightarrow \textbf{Sh}(\D,J_\D)$ defined by precomposition in the usual way. 
We have:

\begin{lemma}[Comparison Lemma]
Let $(\C,J)$ be a site and let $i \colon \D \hookrightarrow \C$ be a full and faithful functor and let $(\D,J_{\D})$ be the induced site. If every object $A$ of $\C$ has a covering sieve $R \in J(A)$ generated by arrows all of whose domains are in $\D$, then $i^*$ is an equivalence.
\end{lemma}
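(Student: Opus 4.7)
The plan is to produce an explicit quasi-inverse $i_*$ to $i^*$ and verify that the unit and counit of the resulting adjunction are natural isomorphisms. The candidate $i_*$ is the right Kan extension along $i^{op}$: for a presheaf $F$ on $\D$ and an object $X$ of $\C$, set
\[
i_* F(X) = \lim_{(D, f \colon iD \to X)} F(D),
\]
with the limit ranging over the comma category $(i \downarrow X)$. That $i$ is fully faithful immediately yields $i^* i_* F \cong F$ naturally in $F$, since for any $D \in \D$ the category $(i \downarrow iD)$ has $(D, \mathrm{id})$ as a terminal object and the defining limit collapses.

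The first nontrivial step is to verify that $i_* F$ is a $J$-sheaf whenever $F$ is a $J_\D$-sheaf. Given a covering sieve $R \in J(X)$, I would refine $R$ using the hypothesis to a subsieve generated by arrows $f_\alpha \colon iD_\alpha \to X$. A matching family for $i_* F$ on such a cover yields compatible data on each $D_\alpha$; compatibility on pullbacks reduces, via the induced topology $J_\D$, to the sheaf condition for $F$ on the $D_\alpha$, appealing again to the covering hypothesis to reduce any intermediate $\C$-object appearing in a pullback to a cover by $\D$-objects.

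The second step is to show the counit $\epsilon_G \colon i_* i^* G \to G$ is an isomorphism for every $J$-sheaf $G$ on $\C$. Given $X \in \C$ and a cover $\lbrace f_\alpha \colon iD_\alpha \to X \rbrace$ guaranteed by hypothesis, the sheaf condition for $G$ expresses $G(X)$ as the equalizer of the canonical pair $\prod_\alpha G(iD_\alpha) \rightrightarrows \prod_{\alpha,\beta} G(iD_\alpha \times_X iD_\beta)$; unwinding the Kan extension formula, this equalizer computes exactly $i_* i^* G(X)$, so $\epsilon_G(X)$ is an isomorphism.

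The principal obstacle is the sheaf verification in Step 1: although the hypothesis guarantees a single cover of $X$ by $\D$-objects, pullbacks of such covers along morphisms of $\C$ may involve arrows or objects not in $\D$, so one must iteratively apply the covering hypothesis to reduce all descent data back to $\D$. Managing the interplay between $J$ and the induced topology $J_\D$ is where the full strength of the hypothesis --- that \emph{every} object of $\C$, not just $X$, admits a $\D$-generated cover --- is essential; once this is in place, the unit and counit computations are formal consequences of $i$ being fully faithful together with the classical sheaf condition.
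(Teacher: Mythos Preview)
The paper does not prove this lemma at all: it is quoted as a classical result (Verdier's Comparison Lemma from SGA4, cf.\ also \cite{Elephant}, C2.2.3) and used as a black box. So there is no ``paper's own proof'' to compare against; your proposal is a sketch of one of the standard proofs of this classical fact.

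Your outline is essentially correct and follows the usual right-Kan-extension route. Two places deserve more care if you intend this as a self-contained proof. First, in your counit argument you write $G(X)$ as an equalizer over products indexed by $iD_\alpha \times_X iD_\beta$; but $\C$ need not have pullbacks, and even when it does these fibre products need not lie in $\D$, so the sheaf condition must be phrased in terms of matching families on the sieve rather than an equalizer of products over pullbacks. Second, the claim that ``this equalizer computes exactly $i_* i^* G(X)$'' hides real work: the Kan-extension limit is over \emph{all} arrows $iD \to X$, not just those in a chosen cover, and identifying the two uses cofinality of the covering family in $(i \downarrow X)$, which is exactly where the hypothesis earns its keep. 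You flag this difficulty in your final paragraph, so you are aware of it; just note that it is the substantive part of the proof rather than a routine verification.
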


We record the following fact about induced topologies that we shall require below.

\begin{lemma}[\cite{Elephant}, C.2.2.2(i)]\label{elephlem}
Let $(\C,J)$ be a site and let $\D$ be a full subcategory of $\C$. Then a sieve $S$ on an object $A$ of $\D$ is $J_\D$-covering if and only if the sieve $\bar{S}$ in $\C$ generated by the members of $S$ is $J$-covering.
\end{lemma}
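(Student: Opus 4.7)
The plan is to prove both implications by establishing the identity $S = \bar{S} \cap \text{Mor}(\D)$ for any sieve $S$ on $A$ in $\D$, with fullness of $\D \hookrightarrow \C$ as the essential ingredient. This identity makes the operations ``generate a $\C$-sieve from a $\D$-sieve'' and ``restrict a $\C$-sieve to $\D$-morphisms'' mutually inverse on the relevant class of sieves, which is what lets one translate between $J$-covers on $A$ in $\C$ and $J_\D$-covers on $A$ in $\D$.

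To prove the identity: the inclusion $S \subseteq \bar{S} \cap \text{Mor}(\D)$ is immediate, since $S \subseteq \bar{S}$ and every element of $S$ is a $\D$-morphism. For the reverse inclusion, let $f : B \to A$ lie in $\bar{S}$ with $B \in \D$. By the definition of the generated sieve, $f$ factors as $f = g \circ h$ with $g : C \to A$ in $S$ (so $C \in \D$) and $h : B \to C$ a morphism of $\C$. Fullness of $\D$ then forces $h$ to be a $\D$-morphism, and since $S$ is a sieve in $\D$ the composite $f = g \circ h$ lies in $S$. Once the identity is in hand, the $(\Leftarrow)$ direction is immediate: if $\bar{S} \in J(A)$, then setting $R := \bar{S}$ we have $S = R \cap \text{Mor}(\D)$, exhibiting $S$ as the $\D$-part of a $J$-cover, hence $S \in J_\D(A)$.

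For the forward direction $(\Rightarrow)$, suppose $S \in J_\D(A)$, so $S = R \cap \text{Mor}(\D)$ for some $R \in J(A)$. Clearly $\bar{S} \subseteq R$, since $R$ is itself a $\C$-sieve containing $S$. Applying the identity above to $R$ shows that the $\D$-arrows of $R$ are exactly $S$, so the discrepancy between $\bar{S}$ and $R$ concerns only arrows in $R$ whose domains lie outside $\D$. To upgrade $\bar{S} \subseteq R$ to $\bar{S} \in J(A)$, one either appeals to the standard convention which defines $J_\D(A)$ as the sieves whose $\C$-completion is $J$-covering (in which case the implication is tautological by the very same identity), or one reads $J_\D(A) = J(A) \cap \text{Mor}(\D)$ literally and invokes the transitivity axiom of $J$ to promote the sub-sieve inclusion to a cover. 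The main obstacle is precisely this forward direction: the backward implication is a clean consequence of fullness, but the forward implication either collapses to the definition or requires a genuine use of the Grothendieck topology axioms (typically supplemented by a density condition of the kind appearing in the hypothesis of the Comparison Lemma above) to bridge the two formulations.
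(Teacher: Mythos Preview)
The paper does not prove this lemma; it is simply quoted from \cite{Elephant} without argument, so there is no in-paper proof to compare your attempt against.

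Your analysis is essentially correct. The identity $S = \bar{S} \cap \mathrm{Mor}(\D)$ is the right observation, your proof of it via fullness is clean, and the backward implication follows immediately. You are also right to flag the forward direction as the delicate point: with the induced topology read literally as $J_\D(A) = \{\,R \cap \mathrm{Mor}(\D) : R \in J(A)\,\}$, the hypothesis $S = R \cap \mathrm{Mor}(\D)$ for some $R \in J(A)$ only yields $\bar{S} \subseteq R$, and promoting this to $\bar{S} \in J(A)$ is not automatic from the Grothendieck-topology axioms alone. One genuinely needs either to take the lemma as the \emph{definition} of $J_\D$ (as some treatments do), or to assume $J$-density of $\D$ in $\C$ and then argue via the transitivity axiom --- exactly the density hypothesis appearing in the Comparison Lemma and established separately in this paper as Proposition~\ref{comlemmhyp}. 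In Johnstone's original the surrounding context supplies what is needed; here the paper is simply importing the statement, and your hedged final paragraph accurately reflects that the forward direction is either definitional or requires that extra input.
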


So now assume $\T_2$ is a Morita extension of $\T_1$ and let $\Sigma_2 \supset \Sigma_1$ be their respective signatures. Assuming Lemma \ref{key} - which we prove below - we have the following:

\begin{lemma}\label{fullfaith}
There is a full and faithful embedding $i \colon \C_{\T_1} \hookrightarrow \C_{\T_2}$.
\end{lemma}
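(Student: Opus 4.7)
The plan is to define $i$ on objects by sending a $\Sigma_1$-formula-in-context $\lb \vec{x}.\phi \rb$ to the same formula-in-context regarded as a $\Sigma_2$-formula-in-context; this makes sense because $\Sigma_2 \supset \Sigma_1$, so every $\Sigma_1$-formula is already a $\Sigma_2$-formula and every $\Sigma_1$-sort is already a $\Sigma_2$-sort. On morphisms, I define $i([\theta]_{\T_1})$ by taking the same representing formula $\theta$ and regarding it as a morphism of $\C_{\T_2}$. This is well-defined: the $\T_1$-provable functionality sequents for $\theta$ are inherited by $\T_2 \supseteq \T_1$, so $\theta$ still represents a morphism in $\C_{\T_2}$, and if $\T_1 \models \theta \dashv\vdash \theta'$ then $\T_2 \models \theta \dashv\vdash \theta'$, so equivalence classes are preserved. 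Functoriality of $i$ is immediate, since in both syntactic categories the identity on $\lb \vec{x}.\phi \rb$ is represented by $\phi \wedge \vec{x}=\vec{x}'$ and the composition of $[\theta]$ and $[\theta']$ is represented by $\exists \vec{y}(\theta \wedge \theta')$, formulas which do not change when the ambient theory is enlarged.

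Faithfulness of $i$ follows directly from Proposition \ref{cemor}: if $i([\theta_1]) = i([\theta_2])$ then $\T_2 \models \theta_1 \dashv\vdash \theta_2$; but $\theta_1, \theta_2$ are $\Sigma_1$-formulas, so conservativity of $\T_2$ over $\T_1$ forces $\T_1 \models \theta_1 \dashv\vdash \theta_2$, hence $[\theta_1]_{\T_1} = [\theta_2]_{\T_1}$.

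Fullness is the main obstacle. Given a morphism $[\theta] \colon \lb \vec{x}.\phi \rb \rightarrow \lb \vec{y}.\psi \rb$ in $\C_{\T_2}$ with $\phi, \psi$ coherent $\Sigma_1$-formulas, the representing formula $\theta(\vec{x},\vec{y})$ is a $\Sigma_2$-formula and may well involve new function and relation symbols from $\Sigma_2 \setminus \Sigma_1$, or existential quantifiers ranging over the newly introduced product, coproduct, subsort and quotient sorts. To pull $\theta$ back into $\Sigma_1$ I invoke Lemma \ref{key}, which will provide exactly the translation result that every coherent $\Sigma_2$-formula in a context of $\Sigma_1$-variables is $\T_2$-provably equivalent to some coherent $\Sigma_1$-formula in the same context. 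Applying this to $\theta$ yields a coherent $\Sigma_1$-formula $\theta'$ with $\T_2 \models \theta \dashv\vdash \theta'$; the three functionality sequents for $\theta'$ are then $\Sigma_1$-sequents derivable from $\T_2$, and conservativity (Proposition \ref{cemor}) transfers their derivability to $\T_1$. Hence $[\theta']_{\T_1}$ is a well-defined morphism in $\C_{\T_1}$ and $i([\theta']_{\T_1}) = [\theta]_{\T_2}$.

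The hard part is thus isolated entirely in Lemma \ref{key}: once that elimination/translation result for coherent formulas is available, every clause of Lemma \ref{fullfaith} reduces to routine verification using the two facts that $\T_2 \supseteq \T_1$ and that the extension is conservative. I expect the proof of Lemma \ref{key} itself to proceed by structural induction on $\Sigma_2$-formulas, with the nontrivial cases being existential quantification over a new product, coproduct, subsort or quotient sort, where one uses the defining sequents \eqref{eq:prod1}--\eqref{eq:quot2} together with the distributivity and Frobenius axioms to rewrite such existentials in terms of quantification over the $\Sigma_1$-sorts they are built from.
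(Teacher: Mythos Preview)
Your proposal is correct and follows essentially the same approach as the paper: define $i$ as the identity on objects and morphism representatives, obtain faithfulness from conservativity (Proposition~\ref{cemor}), and obtain fullness by invoking Lemma~\ref{key} to replace an arbitrary $\Sigma_2$-representative $\theta$ (whose free variables are all of $\Sigma_1$-sorts) by a $\T_2$-equivalent $\Sigma_1$-formula. Your write-up is in fact slightly more explicit than the paper's in spelling out well-definedness, functoriality, and the use of conservativity to transfer the functionality sequents for $\theta'$ back to $\T_1$.
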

\begin{proof}
We have that $\T_2$ is a Morita extension of $\T_1$. Then we have that 
$
\T_2 = \T_1 \cup \lbrace  \sigma_S \vert S \in \Sigma_2 \setminus \Sigma_1 \rbrace
$
where $\sigma_S$ are sequents defining the symbols in $\Sigma_2 \setminus \Sigma_1$ in terms of $\T_1$-formulas. Clearly we get an inclusion functor 
$
i \colon \C_{\T_1} \rightarrow \C_{\T_2} 
$
that takes
$
\lb \vec{x}.\phi \rb \mapsto \lb \vec{x}.\phi \rb
$
since $\phi$ will be a $\Sigma_1$-formula and therefore automatically a $\Sigma_2$-formula since $\Sigma_2 \supset \Sigma_1$. On arrows we define $i$ in the obvious way as $i([\theta]) = [\theta]$. This functor is clearly faithful since $\T_2$ is a conservative extension of $\T_1$ (Proposition \ref{cemor}) and therefore $\T_1$-provable equivalence classes of $\Sigma_1$-formulas are the same as $\T_2$-provable equivalence classes of $\Sigma_1$-formulas. To see that it is full, suppose $[\theta] \colon \lb \vec{x}.\phi \rb \rightarrow \lb \vec{y}.\psi \rb$ is an arrow in $\C_{\T_2}$ where $\phi$ and $\psi$ are $\Sigma_1$-formulas and the variables in $\vec{x}$ and $\vec{y}$ are all of sorts in $\Sigma_1$ (i.e. both objects are in the image of $i$.) Since $\theta$ has free variables only of sorts in $\Sigma_1$ by Lemma \ref{key} below there is a $\Sigma_1$-formula $\theta^*$ (with free variables $\vec{x},\vec{y}$) such that $\T_2 \models \theta \dashv \vdash_{\vec{x},\vec{y}} \theta^*$ and by construction of the syntactic category this means that $i([\theta^*])=[\theta^*]=[\theta]$. 
\end{proof}



\begin{lemma}\label{indeqcoh}
The topology induced by $i$ as in Lemma \ref{fullfaith} on $\C_{\T_1}$  when $\C=(\C_{\T_2},J_2)$ where $J_2$ is the coherent topology on $\C_{\T_2}$ coincides with the coherent topology on $\C_{\T_1}$.
\end{lemma}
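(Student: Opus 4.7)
The plan is to use Lemma \ref{elephlem} to reduce the claim to the following statement: for every object $\lb \vec{y}.\psi \rb$ of $\C_{\T_1}$ and every sieve $S$ on it in $\C_{\T_1}$, $S$ is $J_1$-covering if and only if the sieve $\bar{S}$ it generates in $\C_{\T_2}$ is $J_2$-covering. The coherent topology on either syntactic category is generated by finite jointly epimorphic families, and by Lemma 2.2(2) a finite family $\lb [\theta_i] \colon \lb \vec{x}_i.\phi_i \rb \rightarrow \lb \vec{y}.\psi \rb \rb_{i=1}^n$ is jointly epi in $\C_{\T_k}$ precisely when $\T_k \models \psi \vdash_{\vec{y}} \bigvee_{i=1}^n \exists \vec{x}_i \theta_i$. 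Both implications will then boil down to this sequent-level translation.

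For the forward direction, if $S$ is $J_1$-covering then it contains a finite jointly epi family $\lb [\theta_i] \rb$ of $\Sigma_1$-morphisms, and so $\T_1 \models \psi \vdash_{\vec{y}} \bigvee_i \exists \vec{x}_i \theta_i$. Since $\T_2 \supseteq \T_1$, the same sequent is provable in $\T_2$, so by Lemma 2.2(2) this family is jointly epi in $\C_{\T_2}$ as well, whence $\bar{S}$ (which contains it) is $J_2$-covering. This direction requires no work beyond observing that the added sort axioms of a Morita extension only ever strengthen derivability.

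For the backward direction, suppose $\bar{S}$ is $J_2$-covering, so it contains a finite jointly epi family $\lb g_j \colon C_j \rightarrow \lb \vec{y}.\psi \rb \rb_{j=1}^n$ in $\C_{\T_2}$. By definition of $\bar{S}$, each $g_j$ factors as $g_j = h_j \circ k_j$ where $h_j \in S$, hence $h_j$ is a $\Sigma_1$-morphism $[\theta_j] \colon \lb \vec{x}_{i_j}.\phi_{i_j} \rb \rightarrow \lb \vec{y}.\psi \rb$. Because each $g_j$ factors through $h_j$, the finite subfamily $\lb h_j \rb_{j=1}^n \subseteq S$ is itself jointly epi in $\C_{\T_2}$. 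By Lemma 2.2(2) this yields $\T_2 \models \psi \vdash_{\vec{y}} \bigvee_j \exists \vec{x}_{i_j} \theta_j$; but this is a sequent over $\Sigma_1$, so conservativity of $\T_2$ over $\T_1$ (Proposition \ref{cemor}) gives $\T_1 \models \psi \vdash_{\vec{y}} \bigvee_j \exists \vec{x}_{i_j} \theta_j$, whence $\lb h_j \rb$ is jointly epi in $\C_{\T_1}$ and $S$ is $J_1$-covering.

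The main obstacle is the backward direction: although $\bar{S}$ may contain morphisms whose domains are genuine $\C_{\T_2}$-objects (built out of product, coproduct, subsort or quotient sorts), we must replace them by the members of $S$ they factor through without losing joint epimorphicity. This step is immediate once phrased correctly, and the substantive input is then Proposition \ref{cemor}, which is precisely what transports the covering sequent back to $\T_1$.
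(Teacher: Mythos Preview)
Your proof is correct and follows essentially the same approach as the paper: both reduce the comparison of $J_1$ and the induced topology $K$ to the sequent-level characterization of coherent covers, invoke conservativity of $\T_2$ over $\T_1$ (Proposition~\ref{cemor}) to transport the relevant sequent, and use Lemma~\ref{elephlem} to mediate between sieves in $\C_{\T_1}$ and $\C_{\T_2}$. Your treatment of the backward direction is more explicit than the paper's---the paper simply writes ``without loss of generality we will consider only basic covers'' and runs a chain of biconditionals, whereas you spell out the factorization of the $g_j$ through members of $S$---but this is a difference of presentation, not of substance.
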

\begin{proof}
Write $K$ for topology on $\C_{\T_1}$ induced by the coherent topology $J_2$ on $\C_{\T_2}$. Without loss of generality we will consider only basic covers. We have
$
\lb [\theta_i] \colon \lb \vec{x}_i . \phi_i \rb \rightarrow \lb \vec{y} . \psi \rb \rb \in J_1 (\lb \vec{y}.\psi \rb)  \text{ iff } \T_1 \models \bigvee_i \exists\vec{x}_i \theta_i \dashv \vdash \psi 
$
\text{ iff }
$
\T_2 \models \bigvee_i \exists\vec{x}_i \theta_i \dashv \vdash \psi 
$
\text{ iff } 
$
\lb [\theta_i] \rb \in J_2 (\lb \vec{y}.\psi \rb) 
$
\text{ iff } 
$
\lb [\theta_i] \rb \in K (\lb \vec{y}.\psi \rb)
$
where the first biconditional is simply the definition of the coherent topology, the second one follows from the fact that a $\T_2$ is a conservative extension of $\T_1$, the third one is again by definition and the fourth one follows from Lemma \ref{elephlem}.
\end{proof}

So Lemmas \ref{fullfaith} and \ref{indeqcoh} gives us the first two conditions in the statement of the Comparison Lemma. The next series of lemmas aim to establish the remaining condition. First we require some definitions about how to relate variables of ``new'' sorts to variables of ``old'' sorts. We follow \cite{Barrett} in calling these sequents \emph{codes}.
Let $x_1,...,x_n$ be variables of sorts in $\Sigma_2 \setminus \Sigma_1\textbf{-Sort}$. We say that a \emph{code} for $x_1,...,x_n$ is a $\Sigma_2$-formula 
\[
\xi(x_1,...,x_n,y_1,y_{11},...,y_{1m_1},...,y_n,y_{n1},...,y_{nm_n}) = \bigwedge_{i=1}^n \xi_i(x_i,y_i,y_{i1},y_{im_i})
\]
where each conjunct $\xi_i$ depends on the type of variable it codes.
If $x_i$ is of product sort $S_1 \times ... \times S_{m_i}$ with projections $\pi_1,...,\pi_{m_i}$ then
\[
\xi_i(x_i,y_i,y_{i1},...,y_{im_i}) \equiv \bigwedge_{k=1}^{m_i} \pi_k (x_i) = y_{ik}
\]
If $x_i$ is of coproduct sort $S_1 \amalg ... \amalg S_{m_i}$ with injections $\rho_1,...,\rho_{m_i}$ then
$
\xi^k_i(x_i,y_i,y_{i1},...,y_{im_i}) \equiv \rho_k (x_i) = y_{ik}
$
for each $k=1,...,m_i$. This means that there are $k$ choices for a valid code, and we make one.
If $x_i$ is of subsort $S \subset T$ with injection $i \colon S \hookrightarrow T$ then
$
\xi_i(x_i,y_i,y_{i1},...,y_{im_i}) \equiv i(x_i) = y_i
$.
If $x_i$ is of quotient sort $T= S/{\sim}$ with projection $\epsilon \colon S \rightarrow T$ then
$
\xi_i(x_i,y_i,y_{i1},...,y_{im_i}) \equiv \epsilon(y_i) = x_i
$.
We also stipulate that $\top$ is a code for the empty context of variables, i.e. the \emph{empty code}.
Finally it is important to note that a code for a variable of coproduct sort is not a disjunction of all possible codes. This proves crucial in the proof of Lemma \ref{codefunc} where we prove that codes behave in a ``provably functional'' way as made precise there.

\begin{lemma}\label{codex}
For any code $\xi (\vec{x},\vec{y})$ for any variables $\vec{x}$ of sorts in $\Sigma_2 \setminus \Sigma_1\textbf{-\emph{Sort}}$ and variables $\vec{y}$ of sorts in $\Sigma_1$ we have
\[
\T_2 \models \top \vdash_{\vec{x}} \bigvee_k \exists \vec{y}_k \xi^k(\vec{x},\vec{y}_k)
\]
where the index of the disjunction is taken over all possible codes for a variable of a coproduct sort if there are any among the $\vec{x}$. If there are no such variables then the disjunction symbol may be ignored.
Similarly, for any code other than one that contains a subsort code we have
\[
\T_2 \models \top \vdash_{\vec{y}} \exists \vec{x} \xi(\vec{x},\vec{y})
\]
\end{lemma}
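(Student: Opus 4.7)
The plan is to prove both assertions by induction on the length of the tuple $\vec{x}$, reducing to single-variable claims one for each new-sort variable $x_i$. Since the code $\xi(\vec{x},\vec{y}) = \bigwedge_i \xi_i(x_i, y_i, y_{i1}, \ldots, y_{im_i})$ is a conjunction in which each conjunct involves disjoint sets of bound $y$-variables and a distinct $x_i$, iterated use of Frobenius (\ref{eq:frob}) lets us distribute $\exists \vec{y}$ or $\exists \vec{x}$ across the conjunction, so it suffices to verify each assertion with a single new-sort variable at a time.

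For the first assertion I would do a case analysis on the sort of $x_i$. If $x_i$ lies in a product sort, iterated applications of the elementary coherent-logic fact $\top \vdash_{x} \exists y\, (f(x)=y)$, valid for any function symbol $f$ and derivable from reflexivity plus existential introduction, to each projection $\pi_k$ supply the required witnesses $y_{ik}$. If $x_i$ lies in a coproduct sort, the axiom (\ref{eq:cop1}) is precisely a disjunction of existentially quantified codes, one per injection $\rho_k$, so each disjunct is one of the allowed choices $\xi_i^k$. If $x_i$ lives in a subsort with injection $i \colon S \hookrightarrow T$, the same function-has-value fact applied to $i$ produces the witness $y_i$. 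If $x_i$ is of a quotient sort, (\ref{eq:quot2}) gives the claim directly.

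For the second assertion, where subsort codes are excluded, product-sort variables are handled directly by (\ref{eq:prod1}); coproduct-sort codes $\rho_k(x_i) = y_{ik}$ and quotient-sort codes $\epsilon(y_i) = x_i$ are both dispatched by the function-has-value fact applied to $\rho_k$ and $\epsilon$, taking as witnesses $x_i := \rho_k(y_{ik})$ and $x_i := \epsilon(y_i)$ respectively. I do not expect any genuine obstacles. The only mild bookkeeping concern, which is the one point worth flagging, is that in the first assertion with several $x_i$'s of coproduct sort, the disjunction $\bigvee_k$ in the conclusion ranges over all tuples of per-variable code choices; one has to verify that distributing the local disjunctions provided by (\ref{eq:cop1}) over the conjunction yields exactly this indexed disjunction, which is routine given distributivity (\ref{eq:dist}) together with Frobenius.
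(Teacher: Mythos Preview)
Your proposal is correct and follows essentially the same approach as the paper: reduce to the single-variable basic codes (the paper simply asserts this reduction, while you spell out that Frobenius and distributivity justify it), and then dispatch each of the four cases by invoking either the relevant defining axiom or the elementary ``every function symbol has a value'' fact. Your treatment is in fact slightly more explicit than the paper's, which only writes out the subsort case for the first assertion and the coproduct case for the second.
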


The problem with subsort codes $S' \subset S$ is that it won't necessarily be true that for any variable $y$ of sort $S$ there is a variable $x \colon S'$ such that $i(x)=y$ because the defining formula $\phi$ of the subsort might not be true of all $y \colon S$. 
This is a subtlety that proves important in the proofs of Lemma \ref{prekey} and \ref{key}. The solution, roughly, is this: whenever we want to ``interpret'' a $\Sigma_2$-formula containing a variable $x$ of some subsort as a $\Sigma_1$-formula, we send every instance of $x$ to $y \wedge \phi(y)$, i.e. to a variable of the original sort that ``is in the image'' of $i$.

\begin{proof}[Proof of Lemma \ref{codex}]
It suffices to prove the statement for the basic cases of codes, since any conjunctions of those will also clearly satisfy the conclusions. All these basic cases follow immediately. In the case of subsorts for example we need to show that
$
\T_2 \models \top \vdash_{x \colon S \subset T} \exists y \colon T (i(x)=y)
$
and this follows from the fact that $i$ is a function symbol and $T$ is assumed non-empty. 
The three cases in the other direction follow similarly. Let us do only the coproduct case in order to make it clear why a disjunction is no longer necessary. Given a variable $y_k \colon S_k$ for $k \in \lb 1,...,n \rb$ we know that there is $x \colon S_1 \amalg ... \amalg S_n$ such that $\rho_k (y_k) = x$, i.e. that
$
\T_2 \models \top \vdash_{y_k \colon S_k} \exists x \colon S_1 \amalg ... \amalg S_n (\rho_k(y_k) = x)  
$.
But the formula in the scope of the existential quantifier is exactly $\xi^k (x,y_k)$ which means that 
$
\T_2 \models \top \vdash_{y_k \colon S_k} \exists x \colon S_1 \amalg ... \amalg S_n \xi^k (x,y_k)
$
as required.
\end{proof}

\begin{lemma}[``Functionality of codes'']\label{codefunc}
Let $\vec{x}$ be a context of variables of sorts in $\Sigma_2 \setminus \Sigma_1$ and let $\vec{y}$ be variables of sorts in $\Sigma_1$. Let $\xi(\vec{x},\vec{y})$ be a code for $\vec{x}$. Let $\vec{z}$ be a distinct context of variables of the same sort and length as $\vec{x}$. Then we have
\[
\T_2 \models \xi(\vec{x},\vec{y}) \wedge \xi (\vec{z},\vec{y}) \vdash \vec{x} = \vec{z}
\]
\end{lemma}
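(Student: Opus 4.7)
The plan is to reduce to a case analysis on the form of the code $\xi_i$ associated with each individual variable $x_i$, since $\xi(\vec{x},\vec{y}) \equiv \bigwedge_{i=1}^n \xi_i(x_i, y_i, y_{i1}, \dots, y_{im_i})$ and the conclusion $\vec{x} = \vec{z}$ is likewise a conjunction $\bigwedge_i x_i = z_i$. Hence it suffices to show, for a single $i$, that
\[
\T_2 \models \xi_i(x_i, y_i, y_{i1}, \dots, y_{im_i}) \wedge \xi_i(z_i, y_i, y_{i1}, \dots, y_{im_i}) \vdash x_i = z_i,
\]
and then conjoin the four (or fewer) derivations together. Note that it is crucial here that $\xi(\vec{x}, \vec{y})$ and $\xi(\vec{z}, \vec{y})$ share the \emph{same} $\vec{y}$, and (in the coproduct case) that $\xi$ commits to a \emph{single} choice of injection index $k$ rather than being a disjunction.

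I would dispatch the four basic cases as follows. If $x_i$ is of product sort $\prod_k S_k$, then the two code instances expand to $\bigwedge_k \pi_k(x_i) = y_{ik}$ and $\bigwedge_k \pi_k(z_i) = y_{ik}$; an application of the axiom (\ref{eq:prod2}) yields $x_i = z_i$ directly. If $x_i$ is of subsort $S \subset T$ with inclusion $i \colon S \to T$, then the code gives $i(x_i) = y_i$ and $i(z_i) = y_i$; transitivity of equality yields $i(x_i) = i(z_i)$, and then (\ref{eq:sub2}) gives $x_i = z_i$. If $x_i$ is of quotient sort $T/{\sim}$ with projection $\epsilon$, then the code gives $\epsilon(y_i) = x_i$ and $\epsilon(y_i) = z_i$, from which $x_i = z_i$ follows by symmetry and transitivity of equality alone. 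Finally, if $x_i$ is of coproduct sort $\coprod_k S_k$, then since $\xi_i$ is a single $\xi_i^k$ (the same $k$ for both instances, by assumption), we obtain $\rho_k(y_{ik}) = x_i$ and $\rho_k(y_{ik}) = z_i$, and symmetry and transitivity again yield $x_i = z_i$.

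There is no serious obstacle here: the proof is essentially a routine unpacking of the definitions together with the defining axioms (\ref{eq:prod2}) and (\ref{eq:sub2}). The only point where one has to pause is the coproduct case, where one might initially be tempted to invoke (\ref{eq:cop2}). This is unnecessary precisely because of the convention (flagged just before the statement) that a code for a coproduct variable is one of the injection-specific clauses $\xi_i^k$ rather than a disjunction over all $k$: without this convention one would have to case-split on the two disjunction indices and rule out the ``mixed'' case using (\ref{eq:cop3}), which would no longer give a clean provably-functional behaviour. As it stands, the shared choice of $k$ reduces matters to a transitivity argument.
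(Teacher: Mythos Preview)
Your proposal is correct and follows exactly the same approach as the paper: reduce to the basic one-variable cases and dispatch each using the relevant defining axiom or just equality reasoning. The paper's proof is in fact terser than yours (it spells out only the subsort case and declares the rest ``more or less immediate''), and your remark about why the coproduct convention is essential matches the paper's earlier observation that a code for a coproduct variable must not be taken as a disjunction.
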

\begin{proof}
Once again it suffices to prove the statement for the basic cases, which are more or less immediate.
In the case of subsorts for example let $\vec{x},\vec{z} = x,z \colon S \subset T$ where $S$ is a subsort of $T$ defined by $\phi$ and with injection $i$. Then we have
$
\T_2 \models i(x)=y \wedge i(z)=y \vdash z=x
$
since the left hand-side implies $i(x)=i(z)$ and then the right-hand side follows from (\ref{eq:sub2}).
\end{proof}
As its description suggests, Lemma \ref{codefunc} will be important in proving the second part of Lemma \ref{key} below where we have to show that conjuncts of codes and formulas define functional relations. The next two lemmas establish that every $\Sigma_2$-formula is $\T_2$-provably equivalent to a $\Sigma_1$-formula ``up to coding new variables into old ones.''
\begin{lemma}[``Recoding of terms'']\label{prekey}
Let $t(\vec{\bar{x}},\vec{x})$ be a $\Sigma_2$-term with $\vec{\bar{x}}$ variables of sorts in $\Sigma_1$ and $\vec{x}$ variables of sorts in $\Sigma_2 \setminus \Sigma_1$ and let $z$ be a variable of the same sort as $t$. Then we have
\[
\T_2 \models t(\vec{\bar{x}},\vec{x}) = z \dashv \vdash_{\vec{\bar{x}},\vec{x},z} \bigvee_j \exists \vec{y}_j (\xi_j(z,\vec{x},\vec{y}_j) \wedge \phi^j_t(z,\vec{\bar{x}},\vec{y}_j))
\]
where the $\xi_j$ are codes and each $\phi_t^j$ is a $\Sigma_1$-formula.
\end{lemma}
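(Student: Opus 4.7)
The plan is to proceed by induction on the structure of the term $t$. The only ``obstructions'' to $t = z$ being a $\Sigma_1$-formula are (i) occurrences in $\vec{x}$ (and possibly in $z$) of variables whose sort lies in $\Sigma_2 \setminus \Sigma_1$, and (ii) applications of function symbols of new types: projections $\pi_k$, injections $\rho_k$, subsort inclusions, or quotient maps $\epsilon$. In each case the defining sequents (\ref{eq:prod1})--(\ref{eq:quot2}) provide the bridge: they allow us to replace such a variable or term by a conjunction of code-conjuncts together with a residual $\Sigma_1$-condition on the resulting codes $\vec{y}_j$.

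For the base case $t$ is a single variable. If $t \equiv \bar{x}_i$ lies in the $\Sigma_1$-context, then $z$ has a sort in $\Sigma_1$, so I simply take $\phi^j_t \equiv (\bar{x}_i = z)$ and let the $\xi_j$ range over all possible codes for the variables in $\vec{x}$; by Lemma \ref{codex} the disjunction $\bigvee_j \exists \vec{y}_j \xi_j$ is $\T_2$-provable from $\top$, so both directions reduce to the identity $\bar{x}_i = z$. If $t \equiv x_i$ is of a new sort, then $z$ shares its sort; I code both of them (and the remainder of $\vec{x}$) into $\vec{y}_j$ and let $\phi^j_t$ be the $\Sigma_1$-equation asserting that the code-components for $x_i$ and those for $z$ coincide. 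The forward direction again uses Lemma \ref{codex} to pick a code, while the backward direction is exactly the ``functionality of codes'' (Lemma \ref{codefunc}).

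For the inductive step, write $t \equiv f(t_1, \ldots, t_n)$, introduce fresh variables $w_i$ of the appropriate sorts, and rewrite $t = z$ as $\exists \vec{w}(\bigwedge_i t_i = w_i \wedge f(\vec{w}) = z)$. The inductive hypothesis recodes each $t_i = w_i$; it remains to handle $f(\vec{w}) = z$. If $f \in \Sigma_1$ no further work is needed (beyond the codings already in place). Otherwise I split on the type of $f$ and apply the relevant defining sequent: for $f = \pi_k$ one uses (\ref{eq:prod1})--(\ref{eq:prod2}) to rewrite $\pi_k(w) = z$ as a product-code conjunct in $w$ together with the $\Sigma_1$-equation $y_k = z$; the equation $\rho_k(w) = z$ already \emph{is} a coproduct-code conjunct for $z$; for a subsort inclusion $i(w) = z$ one obtains a subsort-code conjunct carrying along the defining predicate $\phi$ of the subsort, which is crucial because of the asymmetry of Lemma \ref{codex} for subsorts; and $\epsilon(w) = z$ is itself a quotient-code conjunct. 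After collecting everything I apply distributivity (\ref{eq:dist}), Frobenius (\ref{eq:frob}) and (\ref{evee}) to collapse the resulting nested existentials and disjunctions into the form $\bigvee_j \exists \vec{y}_j(\xi_j \wedge \phi^j_t)$ demanded by the statement.

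The main obstacle is bookkeeping rather than conceptual: every coproduct-sorted variable forces a branching of the code, so the index $j$ ranges over all combinations of coproduct-branch choices made for the variables in $\vec{x}$ and (if applicable) for $z$, and one must check that the assembled code remains $\T_2$-provably functional in the sense of Lemma \ref{codefunc}. A subtler point is that when $z$ itself has a sort in $\Sigma_2 \setminus \Sigma_1$, the $\Sigma_1$-formula $\phi^j_t$ cannot mention $z$ at all; all information about $z$ is carried by the coding conjunct $\xi_j(z, \vec{x}, \vec{y}_j)$, and $\phi^j_t$ constrains only the code-components of $\vec{y}_j$ corresponding to $z$. Handling the subsort case also requires the defining formula $\phi$ to be absorbed into $\phi^j_t$ so that the reverse direction of (\ref{eq:sub1}) can be invoked.
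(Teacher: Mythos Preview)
Your proposal is correct and follows essentially the same route as the paper: induction on term structure, with the base case splitting on whether the variable's sort is old or new (and in the latter case on which kind of new sort), and the inductive step introducing fresh variables $w_i$ for the immediate subterms and then case-splitting on whether $f$ is an old symbol, an explicitly defined symbol, or one of the structural maps $\pi_k,\rho_k,i,\epsilon$. You have also identified the one genuinely delicate point---that for subsort variables the defining predicate must be absorbed into $\phi^j_t$ (the paper calls this the ``patch'')---and the reason for it, namely the asymmetry in Lemma~\ref{codex}. The only cosmetic difference is that in the base case for a variable of an old sort you code all of $\vec{x}$, whereas the paper takes the empty code there; both are fine.
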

\begin{proof}
We take cases. 
The difficult cases are those when either $t$ contains variables of new sorts or $S$ is a new sort. The former follow easily but in the latter we need to consider each type of new sort separately. 
For example, if $t\equiv x_i \colon S$
and $S \in \Sigma_1\textbf{-Sort}$ we have
\[
\T_2 \models x_i = z \dashv \vdash_{x_i,z} \top \wedge x_i = z
\]
where $\phi_t \equiv x_i = z$ and we let $\xi = \top$, i.e. the empty code.
If $S \notin \Sigma_1\textbf{-Sort}$ then this case splits into four subcases, one for each new sort. 
If $S = \lb S_1 \times ... \times S_n, \pi_1,...,\pi_n \rb$ we have
\[
\T_2 \models x_i = z \dashv \vdash_{x_i,z} \exists y_1\exists y_2 \exists y_{i1} \exists y_{i2} (\pi_j(x_i) = y_j \wedge \pi_j (z) = y_{ij} \wedge y_j=y_{ij})
\]
which is of the required form with $\xi \equiv \pi_j(x_i) = y_j \wedge \pi_j (z) = y_{ij}$ and $\phi^j_t \equiv y_j=y_{ij}$.) The fact that $\T_2$ contains the above sequents follows immediately from (\ref{eq:prod1}).
The coproduct and quotient cases follow similarly, but there is an important subtlety in the case of subsorts.
%
%
Let $S= \lb S \subset T, i, \psi \rb$. We then have
\[
\T_2 \models x_i = z \dashv \vdash_{x_i,z} \exists y_1,y_2 \colon T (i(x_i)=y_1 \wedge i(z) = y_2 \wedge y_1 = y_2 \wedge \psi (y_1) \wedge \psi (y_2))
\]
which is of the required form with $\phi_t \equiv y_1 = y_2 \wedge \psi (y_1) \wedge \psi (y_2)$. The addition of $\psi (y_1) \wedge \psi (y_2)$ at the end of $\phi_t$ seems redundant here since in the sequent above we already have the conjuncts $i(x_i)=y_1 \wedge i(z) = y_2$ which by (\ref{eq:sub1}) are provably equivalent to the former, but the explicit form of $\phi_t$ as constructed here will be important in the proof of Lemma \ref{key} below where the presence of conjuncts such as $\psi(y_2)$ will prove essential. This is the subtlety that we referred to in the remark just after Lemma \ref{codex}. Now as to why the above sequent actually holds, we must invoke (\ref{eq:sub2}) as follows
\begin{align*}
\T_2 \models x_i = z &\dashv \vdash i(x_i) = i(z) \tag{\ref{eq:sub2}} \\
&\dashv \vdash \exists y_1,y_2 \colon T (i(x_i)=y_1  \tag{\ref{etran}} \\
&\! \! \!\wedge i(z) = y_2 \wedge y_1 = y_2 \wedge \psi (y_1) \wedge \psi (y_2))
\end{align*}

%
%

Now if $t \equiv f(t_1 (\vec{x},\vec{\bar{x}}),...,t_k(\vec{x},\vec{\bar{x}}))\colon S$ we proceed by induction, assuming the hypothesis holds for each term $t_i(\vec{x},\vec{\bar{x}})$, for $i=1,...,k$, i.e.
\[
t_i(\vec{x},\vec{\bar{x}}) = z_i \dashv \vdash \bigvee_j \exists \vec{y}_{ij} (\xi_{ij}(z_i,\vec{x},\vec{y}_{ij}) \wedge \phi^j_{t_i}(z_i,\vec{\bar{x}},\vec{y}_{ij})) \label{IH} \tag{IH}
\]
(We will refer to similar inductive hypotheses as (\ref{IH}) in the rest of this proof even when we drop the subscripts.)
If $f \in \Sigma_1\textbf{-Fun}$ we have
\begin{align*}
\T_2 \models t(\vec{x},\vec{\bar{x}}) = z &\dashv \vdash \bigwedge_{i=1}^k (\bigvee_j \exists \vec{y}_{ij} (\xi_{ij}(z_i,\vec{x},\vec{y}_{ij}) \wedge \phi^j_{t_i}(z_i,\vec{\bar{x}},\vec{y}_{ij}))) \\  
&\wedge f(z_1,...,z_k)=z \tag{\ref{IH}}\\ 
&\dashv \vdash \bigvee_j \exists \vec{y}_{1j} ... \exists \vec{y}_{kj} ((\bigwedge_{i=1}^k \xi_{ij}(z_i,\vec{x},\vec{y}_{ij}))\wedge((\bigwedge_{i=1}^k \phi_{t_i}^j (z_i,\vec{x},\vec{y}_{ij})) \\
& \wedge f(z_1,...,z_k)=z)) \tag{\ref{eq:dist}}
\end{align*}
which is of the required form since $f \in \Sigma_1\textbf{-Fun}$.
If $f \notin \Sigma_1\textbf{-Fun}$ there are two subcases. Either $f$ has arity with all sorts in $\Sigma_1$, in which case it is defined by $\T_2$, say by some formula $\psi(z_1,...,z_k,z)$. In that case we have the exact same sequents as above, except with $\psi(z_1,...,z_k,z)$ replacing $f(z_1,...,z_k)=z$. Otherwise, $f$ is one of $\pi_i, \rho_i, i$ or $\epsilon$ for some new sort. As before we need to take each subcase separately and they all follow straightforwardly except for subsorts. 
%
%
%
So take the case of a subsort injection $i$ defined by a formula $\psi$. We must once again apply a ``patch'' by adding an instance of $\psi$ where it appears not to be needed. Also note that we need only consider $i$ as applied to variables since there is no function symbol with codomain $S$ in $\Sigma_2$ (by assumption.) Thus from (\ref{etran}) and (\ref{eq:sub1}) we get
\[
\T_2 \models i(x) = z \dashv \vdash \exists z' \colon T (i(x) = z' \wedge z=z' \wedge \psi(z'))
\]
which is clearly of the required form since $\psi$ is a $\Sigma_1$-formula.
%
\end{proof}

\begin{lemma}[``Recoding of formulas'']\label{key}
Let $\psi(\vec{\bar{x}}, \vec{x})$ be a $\Sigma_2$-formula with $\vec{x}$ and $\vec{\bar{x}}$ as in Lemma \ref{prekey}. Then
\[
\T_2 \models \psi(\vec{\bar{x}}, \vec{x}) \dashv \vdash \bigvee_j \exists \vec{y}_j (\xi_j (\vec{x},\vec{y}_j) \wedge \psi^*_j (\vec{\bar{x}},\vec{y}_j))
\]
where each $\xi_j$ is a code and each $\psi_j^*$ is a $\Sigma_1$-formula. In addition, each
$
\theta_j \equiv \xi_j (\vec{x},\vec{y}_j) \wedge \psi^*_j (\vec{\bar{x}},\vec{y}_j)
$
is a $\T_2$-provably functional relation from $\psi_j^*$ to $\psi$, i.e. defines a morphism
$
[\theta_j] \colon \lb \vec{\bar{x}},\vec{y}. \psi^*_j \rb \rightarrow \lb \vec{x}, \vec{\bar{x}}.\psi \rb
$
in $\C_{\T_2}$.
\end{lemma}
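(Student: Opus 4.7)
My plan is to prove Lemma \ref{key} by structural induction on the coherent $\Sigma_2$-formula $\psi$, with the two clauses (the equivalence, and the functionality of each disjunct) proved in parallel. Lemma \ref{prekey} already handles the ``term level'' of the recoding, so the induction really only has to explain how to propagate a recoded-into-codes-and-$\Sigma_1$-formulas form through the coherent connectives and through existential quantification.

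For the base cases I would treat $\top$ and $\bot$ trivially (taking the empty code and $\psi^* \equiv \top$ or $\psi^* \equiv \bot$), and then for an atomic formula $R(t_1,\ldots,t_k)$ or $t_1 = t_2$ I would apply Lemma \ref{prekey} to each $t_i$, pull out all the existentials using (\ref{eq:frob}) and rearrange with (\ref{eq:dist}) to produce a single big disjunction of $\exists\vec{y}\,(\xi \wedge \phi_t^j)$-conjunctions; if $R \in \Sigma_1$ the resulting conjunct is already a $\Sigma_1$-formula, and if $R \notin \Sigma_1$ then by admissibility of the definitional extension $R$ is explicitly defined in $\T_2$ by a $\Sigma_1$-formula, which we substitute. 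Conjunctions and disjunctions are straightforward: for $\psi_1 \wedge \psi_2$, apply the inductive hypothesis to each, distribute with (\ref{eq:dist}), and use that codes are closed under conjunction on disjoint variables $\vec{y}_j, \vec{y}_k$; for $\psi_1 \vee \psi_2$, just concatenate the two disjunctions.

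The genuinely interesting case is $\psi \equiv \exists x\,\chi$. If $x$ has sort in $\Sigma_1$, the induction goes through by pulling the $\exists x$ past $\exists \vec{y}_j$ and combining it into $\psi_j^*$ (which stays in $\Sigma_1$). If $x$ has a new sort, the idea is to use Lemma \ref{codex} to ``witness'' $x$ by its code variables $\vec{y}$: apply induction to $\chi$, then existentially quantify the code variables of $x$ in place of $x$ itself, discharging the $\exists x$ using the defining sequents (\ref{eq:prod1}), (\ref{eq:cop1}), (\ref{eq:sub1}), (\ref{eq:quot2}) together with the functionality Lemma \ref{codefunc}. The subsort case is the most delicate: when absorbing $\exists x \colon S$ into an existential over the parent sort $T$, I would keep the defining formula $\psi$ as an explicit conjunct of $\psi_j^*$, exactly the ``patch'' used at the end of the proof of Lemma \ref{prekey}; this guarantees that we do not accidentally allow witnesses from $T \setminus \psi(T)$.

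For the second clause — that each disjunct $\theta_j \equiv \xi_j \wedge \psi_j^*$ is $\T_2$-provably functional as a morphism $\lb\vec{\bar{x}},\vec{y}.\psi_j^*\rb \to \lb\vec{x},\vec{\bar{x}}.\psi\rb$ — I would verify the three clauses of functionality from the construction: $\theta_j \vdash \psi \wedge \psi_j^*$ follows from the $(\dashv)$ direction of the equivalence just established; $\psi_j^* \vdash \exists\vec{x}\,\theta_j$ follows from the $(\vdash)$ direction together with Lemma \ref{codex} (which is exactly why the ``patch'' carrying the subsort-defining formula in $\psi_j^*$ is needed — it makes this existence clause actually provable); and the uniqueness clause $\theta_j \wedge \theta_j[\vec{z}/\vec{x}] \vdash \vec{x} = \vec{z}$ reduces to Lemma \ref{codefunc} applied to $\xi_j(\vec{x},\vec{y}) \wedge \xi_j(\vec{z},\vec{y})$. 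I expect the main obstacle to be bookkeeping rather than ideas: keeping the indices $j$ over the (finitely many) coproduct-choices straight through the connectives, and making sure that in the subsort subcase of the existential step the ``patch'' is carried in the right variables so that Lemma \ref{codex} actually applies and functionality is preserved.
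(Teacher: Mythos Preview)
Your proposal is correct and follows essentially the same route as the paper: structural induction on $\psi$ using Lemma~\ref{prekey} for the atomic cases, (\ref{eq:dist}) and (\ref{eq:frob}) to normalise into a disjunction of code-plus-$\Sigma_1$-formula blocks, a case split on the sort of the bound variable in the existential step (with the subsort ``patch'' carried explicitly), and then Lemmas~\ref{codex} and~\ref{codefunc} for the three functionality clauses. One point you underplay and should make explicit when writing it up: in the atomic case $t_1=t_2$ where the common sort $S$ lies in $\Sigma_2\setminus\Sigma_1$, after introducing a fresh $w\colon S$ and applying Lemma~\ref{prekey} the variable $w$ itself is coded, and you must eliminate $\exists w$ by exactly the same mechanism you describe for the inductive $\exists$-step (the paper does this by four subcases, and the subsort subcase is again where the patch in $\phi_t^j$ is actually used).
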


\begin{proof}
The first part involves a long induction on complexity. 
If $\psi \equiv \tbar = \sbar \colon S$ and $S \in \Sigma_1\textbf{-Sort}$ then 
we get
\begin{align*}
\T_2 \models \tbar = \sbar &\dashv \vdash \exists w \colon S (\tbar = w \wedge \sbar = w)  \\ 
&\dashv \vdash \exists w((\bigvee_j \exists \vec{y}_{t,j} (\xi_{t,j} \wedge \phi_{t,j}))\wedge (\bigvee_k \exists \vec{y}_{s,k} (\xi_{s,k} \wedge \phi_{s,k}))) \tag{Lemma \ref{prekey}}  \\
&\dashv \vdash \exists w (\bigvee_{j,k} (\exists \vec{y}_{t,j}(\xi_{t,j} \wedge \phi_{t,j}) \wedge \exists \vec{y}_{s,k}(\xi_{s,k} \wedge \phi_{s,k})) \tag{\ref{eq:dist}} \\ 
&\dashv \vdash \exists w \bigvee_{j,k} \exists \vec{y}_{t,j} \exists \vec{y}_{s,k} (\xi_{t,j} \wedge \xi_{s,k} \wedge (\phi_{t,j} \wedge \psi_{s,k})) \tag{\ref{eq:frob}} \\ 
&\dashv \vdash \bigvee_{j,k} \exists \vec{y}_{t,j} \exists \vec{y}_{s,k} (\xi_{t,j} \wedge \xi_{s,k} \wedge \exists w:S(\phi_{t,j} \wedge \psi_{s,k})) \tag{\ref{evee}}
\end{align*}
where the first line of the deduction is simply (\ref{etran}).
If $S \notin \Sigma_1\textbf{-Sort}$ we must again consider all subcases.
If $S = \lb S_1 \times S_2, \pi_1,\pi_2 \rb$ then we have
\begin{align*}
\T_2 \models \tbar = \sbar &\dashv \vdash \exists w \colon S_1 \times \dots \times S_n (\tbar = w \wedge \sbar = w) \\ 
&\dashv \vdash \exists w ((\bigvee_j \exists \vec{y}_{t,j} \exists \vec{y}_w (\xi_{t,j} \wedge \phi_{t,j}))  \wedge (\bigvee_k \exists \vec{y}_{s,k} \exists \vec{y}_w (\xi_{s,k}  \wedge \phi_{s,k}))) \tag{\ref{prekey}}   \\ 
&\dashv \vdash \exists w(\bigvee_{j,k} \exists \vec{y}_{t,j} \exists \vec{y}_{s,k} \exists \vec{y}_w (\xi_{t,j} \wedge \xi_{s,k} \wedge \phi_{t,j} \wedge \phi_{s,j})) \tag{\ref{eq:dist}} \\ 
&\dashv \vdash \bigvee_{j,k} \exists \vec{y}_{t,j} \exists \vec{y}_{s,k} \exists \vec{y}_w ( \exists w(\xi_{t,j} \wedge \xi_{s,k}) \wedge \phi_{t,j} \wedge \phi_{s,j}) \tag{\ref{evee}} \\ 
&\dashv \vdash \bigvee_j \exists \vec{y}_{s,j} \exists \vec{y}_{t,j} \exists \vec{y}_w ((\widehat{\xi}_{t,j} \wedge \widehat{\xi}_{s,j}) \wedge \phi_{s,j} \wedge \phi_{t,j}) \tag{\ref{codex}} \\
&\dashv \vdash \bigvee_{j,k} \exists \vec{y}_{t,j} \exists \vec{y}_{s,k} ( \widehat{\xi}_{t,j} \wedge \widehat{\xi}_{s,k}) \wedge \exists \vec{y}_w (\phi_{t,j} \wedge \phi_{s,j})) \tag{\ref{eq:frob}}
\end{align*}
where $\vec{y}_w$ stands for the ($n$-tuple of) variables of sorts $S_1$ to $S_n$ coding the variable $w$ of sort $S_1 \times \dots \times S_n$ and $\widehat{\xi}$ stands for 
the same code as $\xi$ except we've removed the conjuncts coding $w$ through $\vec{y}_w$ (as above, we are allowed to do this because of Lemma \ref{codex}.)
Coproducts and quotients follow similarly but if $S= \lb S \subset T, i, \chi \rb$ then the patch that we mentioned in the proof of Lemma \ref{prekey} is going to be used essentially. We have
\begin{align*}
\T_2 \models \tbar = \sbar &\dashv \vdash \exists w \colon S (\tbar = w \wedge \sbar = w) \\
&\dashv \vdash \exists w \colon S ((\bigvee_j \exists \vec{y}_{t,j} \exists y_w (\xi_{t,j} (w, y_w, \vec{x},\vec{y}_{t,j}) \wedge \phi_{t,j}(\vec{\bar{x}},\vec{y}_{t,j},y_w)))) \\ 
\wedge &(\bigvee_j \exists \vec{y}_{s,j} \exists y_w (\xi_{s,j} (w, y_w, \vec{x},\vec{y}_{s,j}) \wedge \phi_{s,j}(\vec{\bar{x}},\vec{y}_{s,j},y_w))) \\ 
&\dashv \vdash \bigvee_j \exists \vec{y}_{t,j} \exists y_w (\exists w \xi_{t,j} (w, y_w, \vec{x},\vec{y}_{t,j}) \wedge \phi_{t,j}(\vec{\bar{x}},\vec{y}_{t,j},y_w)))) \\ 
\wedge &(\bigvee_j \exists \vec{y}_{s,j} \exists y_w (\exists w \xi_{s,j} (w, y_w, \vec{x},\vec{y}_{s,j}) \wedge \phi_{s,j}(\vec{\bar{x}},\vec{y}_{s,j},y_w))) \\
&\dashv \vdash \bigvee_j \exists \vec{y}_{s,j} \exists \vec{y}_{t,j} ((\xi_{t,j} (\vec{x},\vec{y}_{t,j}) \wedge \xi_{s,j}(\vec{x}, \vec{y}_{s,j})) \wedge (\exists y_w (\phi_{s,j} \wedge \phi_{t,j}))) \tag{$*$}
\end{align*}
which is of the required form. $\widehat{\xi}$ and $y_w$ are used in exactly the same way as in the product case above and every move up to the penultimate sequent is justified similarly. To move to the final sequent (labelled ($*$)) we invoke the fact that in the proof of Lemma \ref{prekey} we stipulated that any $\phi$ which contains a variable coding another variable belonging to a subsort will also contain a conjunct asserting that that variable satisfies the defining formula for the subsort. 
So in particular since each $\phi_{s,j}$ contains  a free variable $y_w$ of sort $T$ coding a variable of sort $S$, $\phi_{s,j}$ will also contain the conjunct $\chi(y_w)$. Since we know from the defining axioms of a subsort that
$\T_2 \models \exists w (i(w) = y_w) \dashv \vdash_{y_w \colon T} \chi (y_w) $
this allows us to safely move from $\xi$ to $\widehat{\xi}$. 
But since by (\ref{eq:sub1}) this is $\T_2$-provably equivalent to $\chi(y_w)$ and the latter is contained as a conjunct in both $\phi_{t,j}$ and $\phi_{s,j}$ we may drop it altogether. After we do so $y_w$ is no longer free in $\xi_t,j$ or $\xi_{s,j}$ and we may therefore push the quantifier in, which is what gives us the (right-hand side of) the final sequent. 

%
If $\psi \equiv R(t_1 (\vec{\bar{x}}, \vec{x}),...,t_k(\vec{\bar{x}}, \vec{x}))$ we once again have two cases. If $R \in \Sigma_1\textbf{-Rel}$ then for any $i=\lb 1,...,k \rb$ we have by Lemma \ref{prekey} that
$
\T_2 \models t_i (\vec{\bar{x}}, \vec{x}) = w_i \dashv \vdash \underset{j}{\bigvee} \exists \vec{y}_{ij} (\xi_{ij} \wedge \psi^*_{ij})
$. Using this fact we can define
$
\psi^* \equiv \exists w_1 ...  \exists w_k (\bigwedge_{i=1}^k \psi^*_{ij}) \wedge R(w_1,...,w_k)
$
which is a $\Sigma_1$-formula and 
$
\xi_j = \bigwedge_{i=1}^k \xi_{ij}
$
which is clearly a code. Then it follows easily that
$
\T_2 \models \psi \dashv \vdash \underset{j}{\bigvee} \exists \vec{y}_{1j} ... \exists \vec{y}_{kj} (\xi_j \wedge \psi^*)
$
which is of the required form.
If $R \notin \Sigma_2\textbf{-Rel}$ then this means that $R$ is definable in terms of a $\Sigma_1$-formula $\chi_R$, which we then use in exactly the same way as we used $R(w_1,...,w_k)$ above.

This completes the base case and we now move to the inductive step. Since we are only considering coherent formulas, we need only check the inductive step for conjunctions, disjunctions and existential quantifiers. Conjunctions and disjunctions follow easily. 
%
%
%
In the case of the existential quantifier let $\psi \equiv \exists x \colon S \phi \colon$ where $x \colon S$ is free in $\phi$ and $\phi$ satisfies the inductive hypothesis, i.e. $
\T_2 \models \phi \dashv \vdash_{x\colon S} \underset{j}{\bigvee} \exists\vec{y} (\xi_j \wedge \phi_j^*)
$. 
Now there are two subcases.
If $S \in \Sigma_1\textbf{-Sort}$ then $x$ does not appear in $\xi_j$ for any $j$ and so we immediately get that
\[
\T_2 \models \exists x \phi \dashv \vdash \exists x \bigvee_j \exists\vec{y} (\xi_j \wedge \phi_j^*) 
\dashv \vdash \bigvee_j \exists\vec{y} (\xi_j \wedge \exists x \phi_j^*)
\]
On the other hand, if $S \notin \Sigma_1\textbf{-Sort}$ we once again have four subcases. They all follow similarly. We do the coproduct case for variety.
%
%
%
So if $S = \lb S_1 \amalg \dots \amalg S_n, \rho_i \rb$ this means that $x \colon S_1\amalg  \dots \amalg S_n$ does not appear free in $\phi^*_j$ but appears free in $\xi_j$. Therefore $\exists x \xi_j$ contains a conjunct $\exists x (\rho_j (x_j) = x)$ for $j=\lb 1,\dots,n \rb$ and also the $x_j$ appear free in $\phi^*_j$. From Lemma \ref{codex} we have
\[
\top \dashv \vdash_{x_j \colon S_j} \exists x \bigvee_{j=1}^n (\rho_j (x_j) = x) \tag{$\dag$} 
\]
Thus we have
\begin{align*}
\T_2 \models \psi &\dashv \vdash \exists x  \colon S_1 \amalg \dots \amalg S_n \phi \dashv \vdash \exists x \bigvee_{j} \exists \vec{y}_j (\xi_j \wedge \phi^*_j) \\
&\dashv \vdash \bigvee_{j} \exists \vec{y}_j (\exists x \xi_j \wedge \phi^*_j) \tag{\ref{etran}}
\dashv \vdash \bigvee_j \exists \vec{y}_j ((\exists x (\rho_j(x_j) = x) \wedge \widehat{\xi}_j )\wedge \phi_j^*) \\ 
&\dashv \vdash \bigvee_j \exists \widehat{\vec{y}}_j (\widehat{\xi}_j \wedge \exists x_j\phi^*_j) \tag{$\dag$}
\end{align*}
where $\widehat{\vec{y}}_j $ means the variables in $\vec{y}_j$ except $x_j$.
%
%
%
So this completes the proof that every $\Sigma_2$-formula $\psi$ is $\T_2$-provably equivalent to a formula of the required form, i.e.
\[
\T_2 \models \psi \dashv \vdash \bigvee_j \exists \vec{y}_j (\xi_j (\vec{x},\vec{y}_j) \wedge \psi^*_j (\vec{\bar{x}},\vec{y}_j)) \tag{H} \label{H}
\]
Write $\theta_j$ for each $\xi_j (\vec{x},\vec{y}_j) \wedge \psi^*_j (\vec{\bar{x}},\vec{y}_j)$ as above. We must now show that each such $\theta_j$ defines a morphism in $\C_{\T_2}$, i.e. that $\theta_j$ is a $\T_2$-provably functional relation from $\psi^*_j$ to $\psi$. 
Firstly, by an instance of one of the disjunction axioms we have
$
 \T_2 \models \xi(\vec{x},\vec{y}) \wedge \psi^* (\vec{\bar{x'}},\vec{y}) \vdash \bigvee \xi_j(\vec{x},\vec{y}) \wedge \psi_j^* (\vec{\bar{x'}},\vec{y}) 
$
where the conjunct of the left is one of the disjuncts on the right. Thus by (\ref{H}) we get
$
\T_2 \models \xi(\vec{x},\vec{y}) \wedge \psi^* (\vec{\bar{x'}},\vec{y}) \vdash \psi 
$ 
and by an instance for one of the axioms for conjunction we also have
$
\T_2 \models \xi(\vec{x},\vec{y}) \wedge \psi^* (\vec{\bar{x'}},\vec{y}) \wedge \vec{\bar{x'}} = \vec{\bar{x}} \vdash \psi^* (\vec{\bar{x'}},\vec{y}) 
$
. From these last two sequents and $\wedge$-intro we then get
\[
\T_2 \models \xi(\vec{x},\vec{y}) \wedge \psi^* (\vec{\bar{x'}},\vec{y}) \wedge \vec{\bar{x'}} = \vec{\bar{x}} \vdash \psi^* (\vec{\bar{x'}},\vec{y}) \wedge  \psi (\vec{\bar{x}},\vec{x})
\]
To see that
\[
\T_2 \models \xi(\vec{x},\vec{y}) \wedge \psi^* (\vec{\bar{x'}},\vec{y}) \wedge \vec{\bar{x'}} = \vec{\bar{x}} \wedge \xi(\vec{x''},\vec{y}) \wedge \psi^* (\vec{\bar{x'}},\vec{y}) \wedge \vec{\bar{x'}} = \vec{\bar{x''}} \vdash \vec{\bar{x''}} = \vec{\bar{x}} \wedge \vec{x''} = \vec{x}
\]
we note that first conjunct on the right follows directly form the formula on the left as above. and the second conjunct on the right follows from Lemma \ref{codefunc}.
Finally, we need
 $\T_2 \models \psi^*(\vec{\bar{x'}},\vec{y}) \vdash \exists \vec{x} \exists \vec{\bar{x}} (\xi(\vec{x},\vec{y}) \wedge \psi^* (\vec{\bar{x'}},\vec{y}) \wedge \vec{\bar{x'}} = \vec{\bar{x}} )$.
In case $\psi$ has no variables of subsorts this follows immediately from Lemma \ref{codex} by noting that the right-hand side of the sequent is $\T_2$-provably equivalent to $\exists \vec{x} (\xi(\vec{x},\vec{y}) \wedge \psi^* (\vec{\bar{x}},\vec{y}))$. If on the other hand a variable of a subsort $S$ of a sort $T$ (via $i$ and $\phi$) is involved then we must rely on the explicit definition of $\psi^*$, which, in the case of subsorts, we have stipulated contains a conjunct $\phi(y)$ for some $y \colon T$ appearing among the $\vec{y}$. If $\phi \equiv \bot$ (i.e. if we have a variable of the empty subsort in $\psi^*$) then $\psi^*$ contains $\bot$ as one of its conjuncts and is therefore $\T_2$-provably equivalent to $\bot$ and the result follows trivially from EFQ. 
\end{proof}

\begin{prop}\label{comlemmhyp}
For every object $\lb \vec{y}.\psi \rb$ of $\C_{\T_2}$ there exists a covering family in $J_2(\lb \vec{y}.\psi \rb)$ generated by morphisms all of whose domains are in $\C_{\T_1}$.
\end{prop}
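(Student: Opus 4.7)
The plan is to read this as an essentially immediate corollary of Lemma~\ref{key}, packaging its conclusion into the language of covering families. Given an arbitrary object $\lb \vec{y}.\psi \rb$ of $\C_{\T_2}$, first split the variable context as $\vec{y} = (\vec{\bar{x}}, \vec{x})$, where $\vec{\bar{x}}$ collects the variables whose sorts lie in $\Sigma_1$ and $\vec{x}$ collects those whose sorts lie in $\Sigma_2 \setminus \Sigma_1$. (If $\vec{x}$ is empty, the code degenerates to $\top$, and the argument below still goes through.) Now apply Lemma~\ref{key} to obtain a decomposition
\[
\T_2 \models \psi(\vec{\bar{x}},\vec{x}) \dashv \vdash \bigvee_j \exists \vec{y}_j \bigl(\xi_j(\vec{x},\vec{y}_j) \wedge \psi^*_j(\vec{\bar{x}},\vec{y}_j)\bigr),
\]
together with the fact that each $\theta_j \equiv \xi_j(\vec{x},\vec{y}_j) \wedge \psi^*_j(\vec{\bar{x}}',\vec{y}_j) \wedge \vec{\bar{x}}' = \vec{\bar{x}}$ defines a morphism $[\theta_j] \colon \lb \vec{\bar{x}}',\vec{y}_j.\psi^*_j \rb \rightarrow \lb \vec{\bar{x}},\vec{x}.\psi \rb$ in $\C_{\T_2}$.

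The key observation I would then make is that the domain $\lb \vec{\bar{x}}',\vec{y}_j.\psi^*_j \rb$ of each $[\theta_j]$ lies in the image of the embedding $i \colon \C_{\T_1} \hookrightarrow \C_{\T_2}$ of Lemma~\ref{fullfaith}: the variables $\vec{\bar{x}}'$ and $\vec{y}_j$ are all of $\Sigma_1$-sorts by construction, and $\psi^*_j$ is a $\Sigma_1$-formula. So these morphisms are precisely of the kind required by the proposition, and it remains only to show that they jointly generate a $J_2$-covering sieve on $\lb \vec{y}.\psi \rb$.

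For this last step I would compute the supremum of the images. By Lemma~1.2(1), the image of $[\theta_j]$ is the subobject $\lb \vec{\bar{x}},\vec{x}. \exists \vec{\bar{x}}'\, \vec{y}_j\, \theta_j \rb$ of $\lb \vec{\bar{x}},\vec{x}. \psi \rb$, and by Lemma~1.2(3) the sup of a finite family of such subobjects is obtained by disjoining the defining formulas. Using the equation $\vec{\bar{x}}' = \vec{\bar{x}}$ to eliminate the bound variables $\vec{\bar{x}}'$, this sup simplifies to $\lb \vec{\bar{x}},\vec{x}. \bigvee_j \exists \vec{y}_j(\xi_j \wedge \psi^*_j) \rb$, which by Lemma~\ref{key} equals $\lb \vec{\bar{x}},\vec{x}.\psi \rb$ in $\C_{\T_2}$. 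In other words, the family $\{[\theta_j]\}_j$ is jointly covering in the coherent topology $J_2$, so the sieve it generates belongs to $J_2(\lb \vec{y}.\psi \rb)$, completing the proof.

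Since Lemma~\ref{key} has done all the heavy lifting, there is no real obstacle here; the only care needed is in tracking the variable contexts carefully (particularly the auxiliary renaming $\vec{\bar{x}}'$ that makes the $\theta_j$ type-correct as morphisms), and checking that the empty-context and empty-code cases do not cause edge-case issues.
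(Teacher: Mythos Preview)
Your proposal is correct and follows essentially the same argument as the paper: invoke Lemma~\ref{key} to obtain the morphisms $[\theta_j]$ with $\Sigma_1$-domains, compute the sup of their images as $\lb \vec{\bar{x}},\vec{x}.\bigvee_j \exists \vec{y}_j \theta_j \rb$, and use the provable equivalence from Lemma~\ref{key} to conclude this is the maximal subobject, hence a $J_2$-cover. If anything, your handling of the variable contexts (the explicit renaming $\vec{\bar{x}}'$) is slightly more careful than the paper's own presentation.
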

\begin{proof}
Let $\lb \vec{\bar{x}},\vec{x}. \psi \rb$ be an object of $\C_{\T_2}$ with $\vec{\bar{x}}$ variables of sorts in $\Sigma_1$ and $\vec{x}$ variables of sorts in $\Sigma_2$. By the second part of Lemma \ref{key}, we have that there are (finitely many) morphisms
$
[\theta_j] \colon \lb \vec{\bar{x}},\vec{x}. \psi^*_j \rb \rightarrow \lb \vec{\bar{x}},\vec{x}. \psi \rb
$
where each $\theta_j$ is of the form described in the proof of Lemma \ref{key}. Their images are given by the subobjects
$
[\exists \vec{y}_j \theta_j] \colon \lb \vec{\bar{x}}, \vec{x} . \exists \vec{y}_j \theta_j \rb \hookrightarrow \lb \vec{\bar{x}},\vec{x}. \psi \rb
$
and the union of all these subobjects is given by the following subobject
$
[\bigvee_j\exists \vec{y}_j \theta_j] \colon \lb \vec{\bar{x}}, \vec{x} . \bigvee_j \exists \vec{y}_j \theta_j \rb \hookrightarrow \lb \vec{\bar{x}},\vec{x}. \psi \rb
$.
But by Lemma \ref{key}, we have
$
\T_2 \models \bigvee_j\exists \vec{y}_j \theta_j \dashv \vdash \psi
$
which by \cite{Elephant} D1.4.4(iv) implies that $[\bigvee_j\exists \vec{y}_j \theta_j]$ is indeed the maximal subobject. But then this means that the family $[\theta_j]$ generates a $J_2$-cover. Since all $\psi_j^*$ are $\Sigma_1$-formulas, we are done.
\end{proof}

We are now ready to prove the desired result.

\begin{theo}\label{main}
Let $\T$ and $\Tp$ be T-Morita equivalent coherent theories. Then they are J-Morita equivalent.
\end{theo}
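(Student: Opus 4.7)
The plan is to reduce the statement to the case of a single Morita extension and then invoke the Comparison Lemma, using the machinery already assembled.

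First I would note that J-Morita equivalence is visibly an equivalence relation on theories, since equivalence of (classifying) toposes is reflexive, symmetric, and transitive. Because T-Morita equivalence is defined in Definition \ref{Tmor} as (essentially) the transitive closure of pre-T-Morita equivalence, and pre-T-Morita equivalence is witnessed by a Morita span $\T = \T_0, \T_1, \ldots, \T_n$ together with $\Tp = \Tp_0, \Tp_1, \ldots, \Tp_m$ with $\T_n$ logically equivalent to $\Tp_m$, it is enough to establish two things: (i) if $\T_2$ is a Morita extension of $\T_1$, then $\T_1$ and $\T_2$ are J-Morita equivalent; and (ii) logical equivalence of theories and the renaming of signature symbols in Definition \ref{Tmor} both induce equivalences of classifying toposes. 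Point (ii) is essentially bookkeeping: logically equivalent theories have identical syntactic categories (morphisms are provable-equivalence classes), and a bijective renaming of symbols produces an isomorphism of syntactic sites, hence an equivalence of sheaf toposes.

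The substantive step is (i). Given a Morita extension $\T_2$ of $\T_1$, I would apply the Comparison Lemma to the inclusion $i \colon \C_{\T_1} \hookrightarrow \C_{\T_2}$ equipped on both sides with the coherent topology. The three hypotheses of the Comparison Lemma are exactly what Lemmas \ref{fullfaith}, \ref{indeqcoh} and Proposition \ref{comlemmhyp} provide: $i$ is full and faithful; the topology induced from the coherent topology $J_2$ on $\C_{\T_2}$ agrees with the coherent topology $J_1$ on $\C_{\T_1}$; and every object of $\C_{\T_2}$ admits a $J_2$-covering family whose members have domains in the image of $i$. The Comparison Lemma then yields an equivalence $i^* \colon \textbf{Sh}(\C_{\T_2}, J_2) \xrightarrow{\sim} \textbf{Sh}(\C_{\T_1}, J_1)$. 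Since $\textbf{Sh}(\C_{\T_k}, J_k)$ is the classifying topos of the coherent theory $\T_k$, this gives J-Morita equivalence of $\T_1$ and $\T_2$.

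Chaining through the Morita spans of an arbitrary T-Morita equivalence then finishes the argument. The genuine obstacle of the proof lies upstream, in Lemmas \ref{prekey} and \ref{key} and the resulting Proposition \ref{comlemmhyp}; once the covering condition has been verified, the theorem itself is a short assembly of the Comparison Lemma with the transitivity of topos equivalence, so I expect the only care required at this stage is to be explicit about the reduction from T-Morita equivalence (a transitive closure of spans, with renaming allowed) to a single Morita extension.
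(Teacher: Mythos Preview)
Your proposal is correct and follows essentially the same approach as the paper: reduce to the case of a single Morita extension and then invoke the Comparison Lemma via Lemmas \ref{fullfaith}, \ref{indeqcoh} and Proposition \ref{comlemmhyp}. The paper's proof is terser (it simply asserts the reduction as clear), whereas you spell out the bookkeeping about renaming and logical equivalence, but the substance is identical.
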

\begin{proof}
Clearly it suffices to show that for any theories $\T_1$ and $\T_2$ such that $\T_2$ is a Morita extension of $\T_1$ then $\T_2$ and $\T_1$ are J-Morita equivalent. 
By Lemmas \ref{fullfaith},\ref{indeqcoh} and Proposition \ref{comlemmhyp} we have that all the hypotheses of the Comparison Lemma are satisfied for the sites $(\C_{\T_1},J_1)$ and $(\C_{\T_2},J_2)$. This means that they give rise to equivalent categories of sheaves and hence (by \cite{Elephant} D3.1.9) $\T_1$ and $\T_2$ have equivalent classifying toposes. 
\end{proof}

\section{J-Morita to T-Morita}\label{jmortotmor}

We will now prove a converse to Theorem \ref{main}. 
Before we begin we should note that ``terminal object sorts'' and ``initial object sorts'' can be defined in terms of quotients and subsorts, as long as our signature contains at least one (non-empty) sort symbol, which we have been assuming throughout. So let $\T$ be a coherent theory over a signature $\Sigma$ and let $S$ be such a non-empty sort. Let $\phi (x,y) \equiv x=x \wedge y=y$ and let $1$ be the quotient sort defined from $S$ using $\phi$ with $\epsilon \colon S \rightarrow 1$ the associated projection. Clearly it follows from (\ref{eq:quot1}) and (\ref{eq:quot2}) that
$
\T \models \top \vdash \exists x \colon 1 (x=x)
$
and
$
\T \models \top \vdash_{x,x' \colon 1} x=x'
$
We can then define new function symbols $!_T \colon T \rightarrow 1$ for any other sort symbols $T \in \Sigma\textbf{-Sort}$ by
$
!_T (x) = y \dashv \vdash_{x \colon T, y \colon 1} x=x \wedge y=y 
$.  
Similarly we can define $0$ to be the subsort of $S$ defined using the formula $\bot$ in which case we get
$
\T \models x=x \vdash_{x \colon 0} \bot
$
. We can then define a function symbol $0_T \colon 0 \rightarrow T$ for any other sort symbol $T \in \Sigma\textbf{-Sort}$ by
$
0_T (x) = y \dashv \vdash_{x \colon 0, y \colon T} \bot
$
. Clearly any $\Tp$ that contains any and all of these extra definitions (over the suitably expanded signature $\Sigma' \supset \Sigma$) will be a Morita extension of $\T$. 



As in the previous section, if $\T_2$ (over $\Sigma_2$) is a Morita extension of $\T_1$ (over $\Sigma_1$) then for $S$ any sort symbol in $\Sigma_2$ we will reserve the notation $\sigma_S$ for the collection of sequents defining $S$ in terms of $\T_1$. With this in mind, the rest of this section is devoted to proving the following result:

\begin{theo}\label{main2}
Let $\T$ and $\Tp$ be J-Morita equivalent coherent theories. Then they are T-Morita equivalent.
\end{theo}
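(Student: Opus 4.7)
The strategy is to reduce the problem to the classical correspondence between classifying toposes and pretopos completions of syntactic categories. The plan is first to show that every coherent theory admits a Morita extension whose syntactic category is (equivalent to) its pretopos completion, and then to use the equivalence of these completions coming from J-Morita equivalence to assemble a common Morita extension of $\T$ and $\Tp$.

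For a coherent theory $\T$ over $\Sigma$, I would construct a Morita extension $\T^\#$ over some $\Sigma^\# \supset \Sigma$ whose syntactic category $\C_{\T^\#}$ is a pretopos. The category $\C_\T$ is already coherent, so what is missing from a pretopos structure are: initial objects, disjoint finite coproducts, and effective quotients of equivalence relations. Each of these is realizable syntactically by the Morita-extension constructs of this paper --- initial objects via empty subsorts (taking the defining formula to be $\bot$), disjoint coproducts via coproduct sorts, effective quotients via quotient sorts, and ambient finite products via product sorts. Iterating these adjunctions (in one Morita extension step per round, as Definition \ref{morex} allows arbitrarily many new symbols to be added at once) and closing under the missing operations yields $\T^\#$ with $\C_{\T^\#}$ equivalent to the pretopos completion $\overline{\C}_\T$. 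The analogous construction gives $\Tp^\#$ a Morita extension of $\Tp$ with $\C_{\Tp^\#}$ equivalent to $\overline{\C}_{\Tp}$.

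By classical results (e.g.\ \cite{Elephant} D3.3 or \cite{MakkaiReyes}), two coherent theories have equivalent classifying toposes if and only if their pretopos completions are equivalent as pretoposes. The J-Morita equivalence hypothesis therefore yields an equivalence of coherent categories $F \colon \C_{\T^\#} \to \C_{\Tp^\#}$. Using $F$ we then construct a common Morita extension $\mathbb{U}$ of $\Tp^\#$: each sort $S \in \Sigma^\#\textbf{-Sort}$ is sent by $F$ to an object $\lb \vec{y}.\phi_S \rb$ of $\C_{\Tp^\#}$, which can be introduced into $\Tp^\#$ as a new sort symbol via a Morita extension of product-plus-subsort type with defining formula $\phi_S$; each function symbol and relation symbol of $\Sigma^\#$ is then added by ordinary definitional extension using the morphism or subobject in $\C_{\Tp^\#}$ that $F$ assigns to it. A suitable renaming of the newly-added symbols of $\mathbb{U}$ turns it into a theory containing a logical equivalent of $\T^\#$, yielding the required Morita span $\T \to \T^\# \to \mathbb{U}$ matched with $\Tp \to \Tp^\# \to \mathbb{U}$ (after renaming).

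The main obstacle I expect lies in the last step, namely verifying that the renamed $\mathbb{U}$ is actually logically equivalent to $\T^\#$ on the renamed subsignature. This amounts to showing that under the interpretation induced by $F$, every sequent provable in $\T^\#$ becomes provable in $\mathbb{U}$, and conversely every $\T^\#$-unprovable sequent remains unprovable --- which holds because $F$ is an equivalence of coherent categories and therefore preserves and reflects the validity of coherent sequents. A secondary bookkeeping issue is that the Morita-extension construction in Step 1 may introduce many iterations of the pretopos operations; each such iteration must be shown to fit into the finite-chain format demanded by Definition \ref{Tmor}, which works because a single Morita extension step is permitted to adjoin an arbitrary set of new symbols.
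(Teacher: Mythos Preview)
Your overall strategy is in the right spirit and is related to the paper's, but it diverges at a key point and there is a genuine gap in your Step 3.

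The paper does \emph{not} extend $\T$ until its syntactic category becomes a pretopos. Instead it passes to the \emph{internal theory} $\T_{\C_\T}$ of the syntactic category (one sort per object, one function symbol per morphism). The hard step is Corollary~\ref{ttct}: $\T$ is T-Morita equivalent to $\T_{\C_\T}$, proved by exhibiting an explicit Morita chain $\T \to \widehat{\T}$ (products, then subsorts and defined function symbols) and a Morita extension $\T_{\C_\T} \to \widetilde{\T}$, and then checking by a lengthy syntactic argument (Proposition~\ref{hateqtil}) that $\widehat{\T}$ and $\widetilde{\T}$ are logically equivalent. Once one is at the level of internal theories, the passage to pretopos completions is two further Morita steps (Makkai--Reyes), and equivalent pretoposes have T-Morita equivalent internal theories trivially, via a common skeleton. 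So the paper front-loads the difficulty into the bridge $\widehat{\T} \leftrightarrow \widetilde{\T}$, and the ``equivalence $\Rightarrow$ common extension'' step becomes essentially free.

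Your proposal instead back-loads the difficulty into Step 3, and that is where the gap lies. You build $\mathbb{U}$ as a Morita extension of $\Tp^\#$ by adjoining, for each $\Sigma^\#$-sort $S$, a new sort realizing $F(S)$. After renaming, these new sorts are supposed to carry a copy of $\T^\#$. But for $\T \to \T^\# \to \mathbb{U}$ to be a leg of a Morita span you need $\mathbb{U}$ to be reachable from $\T^\#$ by a Morita chain, i.e.\ every symbol of $\Sigma'^\#$ must be definable from the (renamed) $\Sigma^\#$-symbols by the product/coproduct/subsort/quotient constructs. Nothing in your construction provides those definitions: the $\Sigma'^\#$-sorts were the \emph{base} of your extension, not the target. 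To close the span you must add, for each $\Sigma'^\#$-sort $S'$, a definition of $S'$ as (say) a subsort of a product of $\Sigma^\#$-sorts via $F^{-1}(S')$, and then prove that $\mathbb{U}$ already derives these defining sequents. That last verification---that the $F$-interpretation of $\T^\#$ inside $\mathbb{U}$ is coherent enough that $F^{-1}(S')$ really lands back on $S'$ up to provable isomorphism---is exactly the kind of calculation carried out in the paper's Proposition~\ref{hateqtil} and Lemma~\ref{tilde}, and it is not a consequence of ``$F$ preserves and reflects validity of sequents'' alone. Your remark that this is ``the main obstacle'' is correct, but the resolution you sketch does not address it.

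A secondary point: your Step 1 also needs more than stated. Adding product, subsort, coproduct, and quotient sorts in a few rounds gives a theory $\T^\#$ whose syntactic category \emph{contains} the relevant colimits for sorts, but you must still argue that $\C_{\T^\#}$ is a pretopos (closed under these operations on \emph{all} objects, not just sorts) and that it is the pretopos completion of $\C_\T$ rather than something larger. This can be done (for instance by invoking Theorem~\ref{main} to see that the inclusion $\C_\T \hookrightarrow \C_{\T^\#}$ induces an equivalence of classifying toposes, hence of pretopos completions), but it is not automatic from the Morita-extension axioms.
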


For any coherent category $\C$ we write $\T_\C$ for its internal theory as a coherent category, i.e. the collection of all sequents satisfied by $\C$ over the canonical language $\Sigma_\C$ obtained in the usual way by taking $\Sigma_\C \textbf{-Sort} = \text{ob} (\C)$ and $\Sigma_\C \textbf{-Fun} = \text{mor} (\C)$ with the obvious sorting. For the purposes of this paper we will not consider the \emph{extended} canonical language in which we also add a relation symbol for each monomorphism.
Now if $\P$ is the pretopos completion of $\C$, then $\T_{\mathcal{\P}}$ is obtained from $\T_\C$ by adding coproducts and then quotients for equivalence relations (\cite{MakkaiReyes}, 8.4 (A) and (B)). Inspection of this construction easily shows that each of these two steps are exactly Morita extensions in our sense. Therefore, for a coherent theory $\T$, there is a Morita span from $\T_{\C_\T}$ to $\T_{\P_\T}$.
It is also easy to see that if $\C$ and $\D$ be equivalent coherent categories then $\T_\C$ and $\T_\D$ are T-Morita equivalent (just take the internal theory of a common skeleton.) From all this we get:

\begin{prop}\label{tct}
Let two coherent theories $\T$ and $\Tp$ be J-Morita equivalent. Then the internal theories $\T_{\C_\T}$ and $\T_{\C_{\Tp}}$ of their syntactic categories are T-Morita equivalent.
\end{prop}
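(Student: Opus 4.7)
The plan is to assemble the conclusion from the observations already recorded in the paragraph immediately preceding the proposition, together with one standard fact about classifying toposes of coherent theories.

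First, I would invoke the classical result that two coherent theories $\T$ and $\Tp$ are J-Morita equivalent if and only if their coherent pretopos completions $\P_\T$ and $\P_{\Tp}$ are equivalent as (pre)toposes. One direction of this is essentially the construction of the classifying topos as sheaves on $(\P_\T, J_{\text{coh}})$; the converse uses that the pretopos completion can be recovered from the classifying topos as its full subcategory of coherent (i.e.\ supercoherent, finitely presentable) objects. This is recorded in Makkai--Reyes and in D3.3 of the Elephant, so I would simply cite it. Hence from J-Morita equivalence of $\T$ and $\Tp$ we get an equivalence of coherent categories $\P_\T \simeq \P_{\Tp}$.

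Second, I would apply the observation already noted just before the proposition: if two coherent categories $\C$ and $\D$ are equivalent, then passing to a common skeleton shows that $\T_\C$ and $\T_\D$ coincide up to renaming of symbols, and in particular are T-Morita equivalent in the sense of Definition \ref{Tmor}. Applying this to $\P_\T \simeq \P_{\Tp}$ yields that $\T_{\P_\T}$ and $\T_{\P_{\Tp}}$ are T-Morita equivalent.

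Third, the preceding paragraph also exhibits Morita spans from $\T_{\C_\T}$ to $\T_{\P_\T}$ (obtained by successively adjoining coproduct sorts and then quotient sorts for equivalence relations, each step being a Morita extension by inspection of the pretopos completion construction in \cite{MakkaiReyes}, 8.4) and from $\T_{\C_{\Tp}}$ to $\T_{\P_{\Tp}}$. Stringing everything together via transitivity of T-Morita equivalence (Proposition \ref{Tmorer}):
\[
\T_{\C_\T} \;\sim_T\; \T_{\P_\T} \;\sim_T\; \T_{\P_{\Tp}} \;\sim_T\; \T_{\C_{\Tp}},
\]
we obtain the conclusion.

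The only non-routine step is the classical correspondence between equivalence of classifying toposes and equivalence of pretopos completions, and the mildly bureaucratic matter of matching signatures up to renaming so that the definitions of Morita span and of T-Morita equivalence (Definition \ref{Tmor}) apply verbatim. Everything else is a direct chase through the definitions and the observations already collected in the surrounding text.
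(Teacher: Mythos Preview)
Your proposal is correct and follows essentially the same route as the paper: the paragraph preceding the proposition assembles exactly the three ingredients you list (Morita span from $\T_{\C_\T}$ to $\T_{\P_\T}$ via coproducts and quotients; equivalent coherent categories have T-Morita equivalent internal theories via a common skeleton; transitivity). The only point where you are more explicit than the paper is in spelling out that J-Morita equivalence of coherent theories yields an equivalence of their pretopos completions---the paper leaves this step implicit, but it is indeed the standard fact (recoverability of the pretopos completion from the classifying topos) that makes the argument go through.
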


We now turn our attention to the more difficult problem of establishing that for any coherent theory $\T$, the internal theory $\T_{\C_\T}$ of its syntactic category $\C_\T$ is T-Morita equivalent to $\T$. 
We do so by explicitly constructing a Morita span between $\T$ and $\T_{\C_\T}$.
Before doing this we introduce some notation. 
Given a sort symbol $S \in \Sigma$ we let $\Sigma(S)$ be the set of $\Sigma$-formulas in a context consisting of a single variable of sort $S$ (which is assumed suitable for them.) Similarly, if $\vec{S}=\prod_{i=1}^n S_i \in \bar{\Sigma}\textbf{-Sort}$ for some $\bar{\Sigma} \supset \Sigma$ we let $\Sigma(\vec{S})$ be the set of $\Sigma$-formulas in the context $x_1 \colon S_1,...,x_n \colon S_n$ (which is assumed suitable for them.) 
For any $\phi \in \Sigma(\vec{S})$ we write
\[
\widetilde{\phi} (s) \equiv \exists x_1 \colon S_1 ... \exists x_n \colon S_n (\xi(s,\vec{x}) \wedge \phi(\vec{x}))
\]
where $\xi$ is a code for $s$. 
We will use the notation $\vec{S}_\phi$ to denote the subsort of $\vec{S}$ defined by $\widetilde{\phi}$ and we will denote by $i_\phi$ the associated function symbol (dropping the tildes). We will also adopt the following notational convention: if $\vec{S}$ is being considered as a product of sorts, we will denote variables of sort $\vec{S}$ by the lower-case version of the letter in question, i.e. in this case $s$. On the other hand, if we are considering $\vec{S}$ as a list of sorts of $\Sigma$ viewed in isolation, we will write $\vec{x} \colon \vec{S}$ to denote $x_1 \colon S_1,...,x_n \colon S_n$. 
The following proposition follows easily.

\begin{prop}\label{tildequiv}
Let $\T$ be a coherent theory over a signature $\Sigma$ and $\phi, \psi$ be $\Sigma$-formulas in a context $x_1 \colon S_1,..., x_n \colon S_n$ suitable for both of them. Let $\Tp$ be any Morita extension of $\T$ that contains the product sort $S_1 \times ... \times S_n$ to $\Sigma$. Then
\[
\T \models \phi \vdash_{\vec{x}\colon \vec{S}} \psi \text{ \emph{iff} } \Tp \models \widetilde{\phi} \vdash_{s \colon \vec{S}} \widetilde{\psi}
\]
\end{prop}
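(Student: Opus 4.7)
The plan is to prove the two directions of the biconditional separately, with the backward direction being the substantive one. Throughout, $\xi(s,\vec{x}) \equiv \bigwedge_{i=1}^n \pi_i(s) = x_i$ denotes the product code, and we rely on the fact that $\Tp$ is a conservative extension of $\T$ (Proposition \ref{cemor}) together with the defining sequents (\ref{eq:prod1}) and (\ref{eq:prod2}) for the product sort $\vec{S}$.

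For the forward direction, suppose $\T \models \phi \vdash_{\vec{x}} \psi$. Since $\Tp$ contains $\T$, this derivation lifts to $\Tp$, so $\Tp \models \phi(\vec{x}) \vdash_{\vec{x}} \psi(\vec{x})$. I would then derive $\Tp \models \widetilde{\phi}(s) \vdash_s \widetilde{\psi}(s)$ by starting from $\xi(s,\vec{x}) \wedge \phi(\vec{x})$, using $\wedge$-elimination followed by the hypothesis to obtain $\xi(s,\vec{x}) \wedge \psi(\vec{x})$, and then reintroducing the existential quantifier on $\vec{x}$.

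For the backward direction, the key auxiliary fact I would establish is that, for any $\Sigma$-formula $\phi$ in the context $\vec{x}\colon\vec{S}$,
\[
\Tp \models \phi(\vec{x}) \dashv\vdash_{\vec{x}} \exists s\colon \vec{S}\,(\xi(s,\vec{x}) \wedge \widetilde{\phi}(s)).
\]
The left-to-right direction uses (\ref{eq:prod1}) to produce an $s$ with $\xi(s,\vec{x})$, after which $\xi(s,\vec{x}) \wedge \phi(\vec{x})$ witnesses the inner existential in $\widetilde{\phi}(s)$. The right-to-left direction unpacks $\widetilde{\phi}(s)$ as $\exists \vec{y}(\xi(s,\vec{y}) \wedge \phi(\vec{y}))$, then uses $\xi(s,\vec{x}) \wedge \xi(s,\vec{y})$—i.e., $\pi_i(s)=x_i \wedge \pi_i(s)=y_i$—to conclude $\vec{x}=\vec{y}$ by transitivity of equality (no need to invoke (\ref{eq:prod2}) here; mere functionality of the $\pi_i$ suffices), and finally substitutes to get $\phi(\vec{x})$.

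With this equivalence in hand, the hypothesis $\Tp \models \widetilde{\phi}(s) \vdash_s \widetilde{\psi}(s)$ yields, by existentially quantifying $s$ on both sides after conjoining with $\xi(s,\vec{x})$,
\[
\Tp \models \exists s\,(\xi(s,\vec{x}) \wedge \widetilde{\phi}(s)) \vdash_{\vec{x}} \exists s\,(\xi(s,\vec{x}) \wedge \widetilde{\psi}(s)),
\]
which by the auxiliary equivalence is precisely $\Tp \models \phi(\vec{x}) \vdash_{\vec{x}} \psi(\vec{x})$. Since $\phi$ and $\psi$ are both pure $\Sigma$-formulas, conservativity of the Morita extension $\Tp$ over $\T$ delivers $\T \models \phi \vdash_{\vec{x}} \psi$.

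The only real obstacle is bookkeeping in the auxiliary equivalence: one must be careful with variable renaming in the inner existential of $\widetilde{\phi}(s)$ and with the application of (\ref{eq:frob}) when pushing quantifiers across conjunctions. Once those routine manipulations are dispatched, the argument is a short composition of the equivalence with the hypothesis and an appeal to Proposition \ref{cemor}.
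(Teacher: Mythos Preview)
Your argument is correct: the forward direction is immediate, and for the backward direction your auxiliary equivalence $\phi(\vec{x}) \dashv\vdash_{\vec{x}} \exists s\,(\xi(s,\vec{x}) \wedge \widetilde{\phi}(s))$ together with conservativity (Proposition~\ref{cemor}) does exactly the job. The paper gives no proof of this proposition---it simply remarks that it ``follows easily''---so your write-up is a faithful and complete unpacking of what the author leaves implicit, using precisely the tools (the product-sort sequents and conservativity of Morita extensions) that the surrounding text makes available.
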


With all this in mind we can proceed to our constructions.

\begin{cons}[$\widehat{\T}$] 
We construct $\widehat{\T}$ from $\T$ in two steps: in the first step, we add arbitrary products and in the second step we add subsorts for any $\Sigma$-formula-in-context $\phi$ together with function symbols between these subsorts for all $\T$-provably functional relations between such formulas. More precisely we first expand the signature $\Sigma$ as follows
\begin{align*}
\Sigma_1 = \Sigma &\cup \lbrace \prod_{i=1}^n S_i \vert S_i \in \Sigma\textbf{-Sort}, n \in \mathbb{N} \rbrace \\ 
&\cup \lb \pi_i \colon \prod_{i=1}^n S_i \rightarrow S_i \vert i =1,...,n, \prod_{i=1}^n S_i \in (\Sigma_1 \setminus \Sigma)\textbf{-Sort}, n \in \mathbb{N} \rb
\end{align*}
and then define the following $\Sigma_1$-theory.
\[
\T_1 = \T \cup \lbrace \sigma_{\Pi_i S_i} \vert \prod_iS_i \in (\Sigma_1 \setminus \Sigma)\textbf{-Sort} \rb 
\]
We then we expand $\Sigma_1$ as follows
\begin{align*}
\widehat{\Sigma}=\Sigma_1 &\cup \lb S_\phi \vert S \in \Sigma_1\textbf{-Sort}, \phi \in \Sigma (S) \rb \cup \lb 1,0 \rb \\
&\cup \lb i_\phi \colon S_\phi \rightarrow S \vert S_\phi \in (\widehat{\Sigma} \setminus \Sigma_1)\textbf{-Sort} \rb \\
&\cup \lb !_S \colon S \rightarrow 1 \vert S \in \widehat{\Sigma}\textbf{-Sort} \rb \cup \lb 0_S \colon 0 \rightarrow S \vert S \in \widehat{\Sigma}\textbf{-Sort} \rb  \\
&\cup \lb \theta \colon S_\phi \rightarrow T_\psi \vert \text{$\theta$ is $\T$-provably functional from $\phi$ to $\psi$} \rb /{\sim} \\
&\cup \lb \pi_{\vec{S}} \colon \vec{S} \times \vec{T} \rightarrow \vec{S} \vert \vec{S}, \vec{S} \times \vec{T} \in \Sigma_1\textbf{-Sort} \rb
\end{align*}
where on the first line $1,0$ are ``terminal'' and ``initial'' sorts as explained in the beginning of this section and in the next-to-last line the symbol $\sim$ expresses $\T$-provable equivalence, i.e. we identify $\theta$ and $\theta'$ iff $\T \models \theta \dashv \vdash \theta'$ (over the appropriate context). Moreover we adopt the convention that $S_\top$ is the same as $S$, i.e. we do not add an extra sort symbol for the subsort defined by formula $\top$. Now for any such (equivalence class represented by a) functional relation $[\theta] \colon S_\phi \rightarrow T_\psi$ we add the following sequent, explicitly defining it:
\[
 [\theta](s)=t \dashv \vdash_{s, t} \exists \vec{x} \colon \vec{S} \exists \vec{y} \colon \vec{T} (\xi(\vec{x},i_\phi(s)) \wedge \xi (\vec{y},i_\psi(t)) \wedge \theta( \vec{x}, \vec{y})) \tag{$\tau_{\theta}$}
\]

Similarly, we add sequents defining the ``compound projection'' symbols $\pi_{\vec{S}}$ in the obvious way: given sorts $\vec{S} = S_1 \times ... \times S_n$ and $\vec{T} = T_1 \times ... \times T_m$ we have
\[
\pi_{\vec{S}} (t) = s \dashv \vdash_{t \colon \vec{S} \times \vec{T}, s \colon \vec{S}} \bigwedge_{i=1}^n \pi^{\vec{S}}_i (s) = \pi^{\vec{S} \times \vec{T}}_i = (t) \tag{$\gamma_{\vec{S},\vec{S}\times\vec{T}}$}
\]
where $\pi_i^{\vec{S}} \colon S_1 \times ... \times S_n \rightarrow S_i$ are the projections associated to the product of the sorts in $\vec{S}$ and $\pi_i^{\vec{S} \times \vec{T}} \colon T_1 \times ... \times T_m \rightarrow T_i$ are the projections associated to the product of the sorts in $\vec{S}$ together with $\vec{T}$.
Thus we get:
\begin{align*}
\widehat{\T} = \T &\cup \lb \sigma_{S_\phi} \vert S_\phi \in (\widehat{\Sigma}\setminus \Sigma_1)\textbf{-Sort}\rb \\
&\cup \lb \tau_\theta \vert [\theta] \in (\widehat{\Sigma} \setminus \Sigma)\textbf{-Fun} \rb \\
&\cup \lb \gamma_{\vec{S}, \vec{S} \times \vec{T}} \vert  \vec{S}, \vec{S} \times \vec{T} \in \Sigma_1\textbf{-Sort} \rb
\end{align*}
Clearly $\T_1$ is a Morita extension of $\T$ since we are only adding explicit definitions of new sort symbols. $\widehat{\T}$ is a Morita extension of $\T_1$ since we are only adding explicit definitions of new sort symbols (including a singleton sort) and new function symbols. Thus we have defined a Morita chain from $\T$ to $\widehat{\T}$.
\end{cons}


\begin{cons}[$\widetilde{\T}$] 
We are given the syntactic category $\C_\T$ of $\T$ viewed as a coherent category. Firstly, without loss of generality we pass to a skeleton of $\C_\T$ (described by what follows.)  
In particular we now choose one specific sort symbol for (the object part) of every limit in $\C_\T$ together with specific (formulas representing the associated) universal arrows.
The choices we make are the canonical ones, as laid out in D1 of \cite{Elephant}. 
Let $\Sigma_\T$ be the signature of the canonical language of $\C_\T$ (i.e. of the skeleton of our original $\C_\T$.) We take $\Sigma_\T$ to consist of symbols in $\widehat{\Sigma}$ (this is our choice of skeleton): we write $\lb x_1 \colon S_1,...,x_n \colon S_n. \phi \rb$ as $(S_1 \times...\times S_n)_\phi$ (or $\vec{S}_\phi$ for short) and we use the already available symbols in $\widehat{\Sigma}$ for the function symbols.
This also means that $\lb x \colon S. \top \rb$ will be written as $S$, $\lb []. \top \rb$ as $1$ and $\lb [].\bot \rb$ as $0$.
Since we are considering objects of $\C_\T$ up to renaming of variables all this involves no loss of information. 
 Also, for example for two objects $\vec{S}_\phi$ and $\vec{T}_\psi$ we will take their product to be given by $(\vec{S}\times \vec{T})_{\phi \wedge \psi}$. 
 So this rewriting clearly exhibits $\Sigma_\T$ as a subsignature of $\widehat{\Sigma}$. 
We now define the following extension of $\Sigma_\T$
\begin{align*}
\widetilde{\Sigma} = \Sigma_\T &\cup \lb f \colon \vec{S} \rightarrow T \vert f \in \Sigma\textbf{-Fun} \rb \cup \lb R \subset \vec{S} \vert R \in \Sigma\textbf{-Rel} \rb \\
&\cup \lb \pi_{i} \colon S_1 \times ... \times S_n \rightarrow  S_i \vert S_1\times ... \times S_n \in \Sigma_\T\textbf{-Sort}, i \in \lb 1,...,n \rb  \rb \\
&\cup \lb i_\phi \colon \vec{S}_\phi \rightarrow \vec{S} \vert \vec{S} \in \Sigma_{\T}\textbf{-Sort},  \phi \in \Sigma(\vec{S})\rb \\
&\cup \lb !_S  \colon S \rightarrow 1 \vert S \in \Sigma_\T\textbf{-Sort} \rb \cup \lb 0_S \colon 0 \rightarrow S \vert S \in \Sigma_\T \textbf{-Sort} \rb
\end{align*}
where function symbols $f$ are defined as having arity $S_1,...,S_n \rightarrow T$ rather than $S_1 \times ... \times S_n \rightarrow T$, and relation symbols $R$ are defined as $n$-ary predicates $R \subset S_1\times ... \times S_n$ rather than as unary predicates $R \subset \vec{S}$. 
Hence we define the following $\widetilde{\Sigma}$-theory $\widetilde{\T}$:
\begin{align*}
\widetilde{\T} = \T_{\C_\T} \cup \lb &\delta_{\vec{S},i} \equiv \pi_i (s) = x \dashv \vdash_{s \colon \vec{S}, x \colon S_i} [x_i = x_i](s) = x \vert   \rb \\
\cup \lb &\epsilon_{\vec{S}, \phi} \equiv i_\phi (s) = x \dashv \vdash_{s,x} [\phi](s) = x \vert \vec{S} \in \Sigma_{\T}\textbf{Sort},  \phi \in \Sigma(\vec{S}) \rb  \\  
\cup \lb &\zeta_S \equiv 0_S (s) = x \dashv \vdash_{s,x} \bot \vert S \in \Sigma_\T \textbf{-Sort} \rb \\ 
\cup  \lb &\iota_S \equiv \text{ }  !_S (s) = x \dashv \vdash_{s,x} s=s \wedge x=x \vert S \in \Sigma_\T \textbf{-Sort} \rb \\ 
\cup \lb &\mu_f \equiv   f(\vec{x})=y \dashv \vdash_{\vec{x} \colon \vec{S}, y \colon T}  \exists s \colon \vec{S} \exists t \colon T (\xi(s,\vec{x}) \wedge \xi(t,y) \wedge \\  &[f(\vec{x})=y] (s) = t  \vert f \in \Sigma\textbf{-Fun}\rb \\ 
\cup \lb &\nu_R \equiv R\vec{x} \dashv \vdash_{\vec{x} \colon \vec{S}} \exists r \colon \vec{S}_{R\vec{x}} \exists s \colon \vec{S} (\xi(s,\vec{x}) \wedge i_{R\vec{x}} (r) = s) \vert R \in \Sigma\textbf{-Rel} \rb \\ 
\end{align*}
where the codes used in sequents $\mu_f$ and $\nu_R$ are of course expressed in terms of the symbols defined by the sequents $\delta_{\vec{S},i}$ and $\epsilon_{\vec{S},\phi}$. Clearly $\widetilde{\T}$ is a Morita extension of $\T_{\C_\T}$ since it is obtained from the latter by the addition of explicit definitions for new function and relation symbols. 
\end{cons}


\begin{lemma}\label{tilde}
Let $\phi$ be any $\Sigma$-formula, viewed as a $\widetilde{\Sigma}$-formula. Then we have:
\[
\widetilde{\T} \models \widetilde{\phi} \dashv \vdash_{s \colon \vec{S}} \exists y \colon \vec{S}_\phi (i_\phi (y) = s)
\]

\end{lemma}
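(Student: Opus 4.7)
The plan is to proceed by induction on the structure of the coherent formula $\phi$, using the definitional axioms $\mu_f$ and $\nu_R$ of $\widetilde{\T}$ to bridge the $\Sigma$-syntax in which $\phi$ is written and the $\Sigma_\T$-syntax in which $\vec{S}_\phi$ and $i_\phi$ live, and exploiting that $\T_{\C_\T} \subseteq \widetilde{\T}$ contains all sequents satisfied by $\C_\T$ under its canonical interpretation. The first move in every step is to apply the axiom $\epsilon_{\vec{S},\phi}$ to rewrite the right-hand side as $\exists y\colon\vec{S}_\phi\,([\phi](y)=s)$: since $[\phi]\colon \vec{S}_\phi \to \vec{S}$ is the canonical subobject inclusion $\lb\vec{x}.\phi\rb\hookrightarrow\lb\vec{x}.\top\rb$ in $\C_\T$, this expresses that $s$ lies in the image of that subobject, and the target equivalence amounts to showing that $\widetilde{\phi}(s)$ picks out exactly this image.

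For the base cases, $\phi\equiv\top$ is immediate from the convention $\vec{S}_\top=\vec{S}$ and Lemma \ref{codex}. For $\phi\equiv R(\vec{x})$ I would apply $\nu_R$ and then collapse the two nested existentially-quantified codes via Lemmas \ref{codex} and \ref{codefunc}: the functionality of codes identifies $s$ with the code variable introduced by $\nu_R$, leaving exactly $\exists r\,(i_{R\vec{x}}(r)=s)$. Equations $t_1=t_2$ are handled analogously by iterating $\mu_f$ through the term tree.

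For the inductive step I would exploit that in $\C_\T$ the subobject-forming operations track the logical connectives canonically: $\vec{S}_{\phi\wedge\psi}$ is the pullback of $\vec{S}_\phi, \vec{S}_\psi$ over $\vec{S}$, $\vec{S}_{\phi\vee\psi}$ is their join, and $\vec{S}_{\exists y\,\phi}$ is the image of $[\phi]$ along the projection associated with $y$. Each of these facts is satisfied by $\C_\T$ and hence lies in $\T_{\C_\T}$, giving sequents such as $\exists y\,([\phi\wedge\psi](y)=s)\dashv\vdash\exists y_1\,([\phi](y_1)=s)\wedge\exists y_2\,([\psi](y_2)=s)$. Combined with the inductive hypothesis on each component and with elementary code manipulation (distributivity, Frobenius, Lemma \ref{codex}), this delivers the equivalence for each connective.

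The main obstacle is the bookkeeping of codes throughout: at every stage, the code $\xi(s,\vec{x})$ inside $\widetilde{\phi}(s)$ has to be coordinated with the fresh codes introduced by $\mu_f$ and $\nu_R$ in the translation of atomic subformulas, and pairs of codes must be collapsed against each other via Lemma \ref{codefunc}. A secondary subtlety arises in the existential case when the bound variable itself has a product sort, since its code then involves projection symbols that must be reorganized before the inductive hypothesis applies.
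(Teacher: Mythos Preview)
Your proposal is correct and follows essentially the same route as the paper's proof: induction on the complexity of $\phi$, with the atomic cases handled via $\nu_R$ (resp.\ $\mu_f$) together with Lemmas~\ref{codex} and~\ref{codefunc}, and the inductive steps for $\wedge$, $\vee$, $\exists$ handled by invoking the internal-language sequents in $\T_{\C_\T}$ that express the corresponding pullback, sup, and image constructions in $\C_\T$. The paper spells out the conjunction case in detail via an explicit pullback diagram (with $\phi_1$ and $\phi_2$ allowed different contexts $\vec{S}$, $\vec{T}$, which is slightly more general than the single-context picture you sketch), but the underlying idea is identical; your worry about bound variables of product sort in the existential case is moot here, since $\phi$ is a $\Sigma$-formula and hence its bound variables are of sorts in $\Sigma$.
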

\begin{proof}
We proceed by induction on the complexity of $\phi$. If $\phi \equiv R\vec{x}$ for some relation symbol $R \in \Sigma\textbf{-Rel}$ of arity $\vec{S}$ then we have
\begin{align*}
\widetilde{\T} \models \widetilde{R\vec{x}} &\dashv \vdash_{s \colon \vec{S}} \exists \vec{x} \colon \vec{S} (\xi(s,\vec{x}) \wedge R\vec{x}) \\ 
&\dashv \vdash_{s \colon \vec{S}} \exists \vec{x} \colon \vec{S} (\xi(s,\vec{x})\wedge(\exists s' \colon \vec{S}\exists y \colon \vec{S}_{R\vec{x}} (\xi(s',\vec{x}) \wedge i_{R\vec{x}} (y) = s'))) \tag{$\nu_R$}\\
&\dashv \vdash_{s \colon \vec{S}} \exists y \colon \vec{S}_{R\vec{x}} (\exists \vec{x} \colon \vec{S}( \xi(s,\vec{x}) \wedge i_{R\vec{x}} (y) = s  ))  \tag{\ref{codefunc}} \\
&\dashv \vdash_{s \colon \vec{S}} \exists y \colon \vec{S}_{R\vec{x}} (i_{R\vec{x}} (y) = s ) \tag{\ref{codex}}
\end{align*}

If $\phi \equiv s=t$ for $\Sigma$-terms $s,t$ then it clearly suffices to prove the proposition only for simple atomic formulas, i.e. for $s \equiv f (\vec{x})$ for some $f \in \Sigma\textbf{-Sort}$ and for $t\equiv y$, a variable of the appropriate sort given the arity of $f$. The proof then follows exactly analogously as in the case of relation symbols.

Now suppose $\phi \equiv \phi_1 \wedge \phi_2$ such that the inductive hypothesis holds for $\phi_1$ and $\phi_2$. This means we have
\[
\widetilde{\T} \models \widetilde{\phi_1} \dashv \vdash_{s \colon \vec{S}} \exists y \colon \vec{S}_{\phi_1} (i_{\phi_1} (y) = s), \widetilde{\phi_2} \dashv \vdash_{s \colon \vec{S}} \exists y \colon \vec{S}_{\phi_2} (i_{\phi_2} (y) = s) \tag{IH} \label{IH1a} 
\]
Note that the following diagram commutes in $\C_\T$ and that each square (and therefore each rectangle) is a pullback:
\[
\xymatrix{
(\vec{S} \times \vec{T})_{\phi_1 \wedge \phi_2} \ar[r]^q \ar[d]_{p} &(\vec{S} \times \vec{T})_{\phi_2 \wedge \vec{x} = \vec{x}} \ar[d]_{p_{22}} \ar[r]^{p_{21}} &\vec{T}_{\phi_2} \ar[d]^{i_{\phi_2}} \\
(\vec{S} \times \vec{T})_{\phi_1 \wedge \vec{y} = \vec{y}} \ar[r]_{p_{12}} \ar[d]_{p_{11}} &\vec{S} \times \vec{T} \ar[r]^{\pi_{\vec{T}}} \ar[d]_{\pi_{\vec{S}}} &\vec{T} \\ 
\vec{S}_{\phi_1} \ar[r]^{i_{\phi_1}} &\vec{S}
}
\]
In the deduction below we will refer to the sequents satisfied by $\widetilde{\T}$ related to properties of this diagram by the obvious abbreviations. So when we write (comm) below we are invoking the relevant sequent expressing the commutativity of some part of the diagram -- similarly with (pull) for pullbacks and (inj) for some map being a mono.
With this in mind we have:
\begin{align*}
\widetilde{\T} \models \widetilde{\phi_1 \wedge \phi_2} (r) &\dashv \vdash_{r \colon \vec{S} \times \vec{T}} \exists s\colon \vec{S} \exists t \colon \vec{T} (\xi(r,s,t) \wedge \widetilde{\phi_1} \wedge \widetilde{\phi_2}) \\
&\dashv \vdash_r \exists s \exists t \exists z \colon \vec{S}_{\phi_1} \exists w \colon \vec{T}_{\phi_2} (\xi(r,s,t) \wedge \tag{\ref{IH1a}} \\
&i_{\phi_1} (z)=s \wedge i_{\phi_2} (w) = t) \\
&\dashv \vdash_r \exists z \exists w (i_{\phi_1} (z) = \pi_{\vec{S}} (r) \wedge i_{\phi_2} (w) = \pi_{\vec{T}} (r)  ) \tag{subst} \\ 
&\dashv \vdash_r \exists z \exists w \exists x \colon (\vec{S} \times \vec{T})_{\phi_1 \wedge \vec{y} = \vec{y}} \exists  y \colon (\vec{S} \times \vec{T})_{\phi_2 \wedge \vec{x} = \vec{x}} \tag{pull} \\
&(p_{11}(x) = z \wedge p_{12}(x) = r \wedge p_{21}(y) = w \wedge p_{22}(y) = r) \\
&\dashv \vdash_r  \exists z \exists w \exists x \exists y \exists v \colon (\vec{S} \times \vec{T})_{\phi_1 \wedge \phi_2} \tag{pull} \\ 
&(p_{11}(x) = z \wedge p_{21}(y) = w \wedge \\
&p(v) = x \wedge q(v)=y \wedge p_{12}(x) = r \wedge p_{22}(y) = r) \\
&\dashv \vdash_r \exists v (p_{12}(p(v)) = r \wedge p_{22}(q(v)) = r \tag{subst} \\
&\wedge \exists z \exists w (p_{11}(p(v)) = z \wedge p_{21}(q(v)) = w)) \tag{\ref{eq:frob}}\\ 
&\dashv \vdash_r \exists v (i_{\phi_1 \wedge \phi_2} (v) = r \tag{comm} \\ 
&\wedge \exists z \exists w (i_{\phi_1} (p_{11}(p(v))) = i_{\phi_1} (z) \wedge i_{\phi_2}(p_{21}(q(v))) = i_{\phi_2} (w))) \tag{inj} \\ 
&\dashv \vdash_r \exists v (i_{\phi_1 \wedge \phi_2} (v) = r \tag{comm} \\
&\wedge  \exists z \exists w (\pi_{\vec{S}}(p_{12}(p(v))) = i_{\phi_1} (z) \wedge \pi_{\vec{T}}(p_{22}(q(v))) = i_{\phi_2} (w))) \\
&\dashv \vdash_r \exists v (i_{\phi_1 \wedge \phi_2} (v) = r \tag{subst} \\
&\wedge  \exists z \exists w (\pi_{\vec{S}}(r) = i_{\phi_1} (z) \wedge \pi_{\vec{T}}(r) = i_{\phi_2} (w))) \tag{p}\\
&\dashv \vdash_r \exists v (i_{\phi_1 \wedge \phi_2} (v) = r) \tag{$*$}
\end{align*}
The first sequent is by  a simple unpacking of the definition of $\widetilde{\phi_1 \wedge \phi_2}$ where $\xi(r,s,t)$ stands for $\pi_{\vec{S}} (r) = s \wedge \pi_{\vec{T}} (r) = t$. The last step $(*)$ is accomplished by noting that the conjunct in the penultimate sequent labelled (p) is the same as the (RHS of the) sequent appearing at the third step. Since we know that everything that follows after the third step is logically equivalent to it, we may thus eliminate the conjunct in moving to $(*)$. 
%
The cases $\phi \equiv \phi_1 \vee \phi_2$ and $\psi \equiv \exists y \phi$ proceed by exactly analogous internal language arguments involving the sups and image factorizations of the relevant subobjects in $\C_\T$. We omit the details.

\end{proof}

\begin{prop}\label{hateqtil}
Let $\T$ be a coherent theory over a signature $\Sigma$. Then $\widehat{\T}$ is logically equivalent to $\widetilde{\T}$.
\end{prop}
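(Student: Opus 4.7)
The strategy is to prove logical equivalence by checking that each theory derives every axiom of the other. Both $\widehat{\T}$ and $\widetilde{\T}$ are defined over essentially the same expanded signature (the symbols of $\Sigma_\T$ together with the original $\Sigma$-function and $\Sigma$-relation symbols, plus the injections, projections, and $0_S$, $!_S$), so after checking that the signatures can be identified in the obvious way, the task reduces to checking coincidence of the defining sequents up to provable equivalence.

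First I would show $\widetilde{\T} \models \widehat{\T}$. The axioms of $\widehat{\T}$ fall into four families: (i) the axioms of $\T$, (ii) the subsort axioms $\sigma_{S_\phi}$, (iii) the functional-relation axioms $\tau_\theta$, and (iv) the compound-projection axioms $\gamma_{\vec{S},\vec{S}\times\vec{T}}$. For an axiom $\phi \vdash_{\vec{x}} \psi$ of $\T$, using $\mu_f$ and $\nu_R$ to unfold $\Sigma$-symbols and Lemma \ref{tilde} to rewrite $\widetilde{\phi}$ and $\widetilde{\psi}$ in terms of the injections $i_\phi, i_\psi$, the sequent reduces to the corresponding subobject inclusion in $\C_\T$, which lies in $\T_{\C_\T}$. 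For sub$_1$ of $\sigma_{S_\phi}$, Lemma \ref{tilde} gives exactly the required equivalence, while sub$_2$ follows because $i_\phi$ is a monomorphism in $\C_\T$ and that sequent is satisfied by $\T_{\C_\T}$. The $\tau_\theta$ and $\gamma$ sequents follow from the $\delta_{\vec{S},i}$ and $\epsilon_{\vec{S},\phi}$ definitions together with the fact that $[\theta]$ and the compound projections are interpreted in $\C_\T$ exactly as the morphisms they are intended to name.

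Second I would show $\widehat{\T} \models \widetilde{\T}$. The definitional sequents $\delta, \epsilon, \zeta, \iota, \mu_f, \nu_R$ are all direct consequences of the way the relevant symbols are defined inside $\widehat{\T}$: for example, $\mu_f$ simply says that the original function symbol $f$ agrees, under the coding, with the morphism $[f(\vec{x})=y]$ of $\C_\T$ that represents its graph, which is immediate from $\tau_{[f(\vec{x})=y]}$; the $\delta, \epsilon$ sequents are similarly routine unpackings. For an arbitrary sequent $\alpha \vdash_{s\colon \vec{S}_\phi} \beta$ of $\T_{\C_\T}$, the idea is to translate it back to a sequent over $\Sigma$. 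Each atomic subformula of $\alpha, \beta$ is built from morphisms $[\theta]$ of $\C_\T$, each of which is defined in $\widehat{\T}$ by $\tau_\theta$ in terms of a $\T$-provably functional $\Sigma$-formula; substituting these definitions yields $\Sigma$-formulas $\alpha^\sharp, \beta^\sharp$, and the fact that the original sequent holds in $\C_\T$ is equivalent to $\T \models \phi \wedge \alpha^\sharp \vdash_{\vec{x}\colon\vec{S}} \beta^\sharp$. By Proposition \ref{tildequiv} this is equivalent to $\widehat{\T} \models \widetilde{\phi \wedge \alpha^\sharp} \vdash_s \widetilde{\beta^\sharp}$, and one more application of Lemma \ref{tilde} converts this to the desired sequent about $i_\phi$.

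The main obstacle is the bookkeeping needed to pass between formulas in the canonical language $\Sigma_\T$ (whose function symbols are equivalence classes $[\theta]$ of formulas of $\C_\T$) and their $\Sigma$-presentations via the coding machinery. In particular, composition of morphisms $[\theta_1] \circ [\theta_2]$ in $\C_\T$ must be matched with the appropriate composite functional relation on the $\Sigma$ side, and this match is exactly what the $\tau_\theta$, $\delta_{\vec{S},i}$, and $\epsilon_{\vec{S},\phi}$ sequents encode. Once this translation dictionary is established, the remaining checks are routine inductions on the structure of $\Sigma_\T$-formulas, parallel in form to the induction carried out in the proof of Lemma \ref{tilde}.
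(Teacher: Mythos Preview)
Your overall two-direction strategy matches the paper's, and your treatment of the direction $\widetilde{\T}\models\widehat{\T}$ (axioms of $\T$ via Lemma~\ref{tilde} and Proposition~\ref{tildequiv}, subsort axioms via the monos $[\phi]$ in $\C_\T$, $\tau_\theta$ via the $\epsilon$- and $\delta$-definitions) is essentially what the paper does, though you have omitted the product-sort axioms $\sigma_{\Pi_iS_i}$ from your list of $\widehat{\T}$-axioms; the paper treats these separately and they require their own argument.

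The real divergence is in the direction $\widehat{\T}\models\T_{\C_\T}$. The paper does \emph{not} attempt to handle an arbitrary sequent of the internal theory by a generic translation $\alpha\mapsto\alpha^\sharp$; instead it axiomatizes $\T_{\C_\T}$ by the coherent-category data (commutative triangles, terminal object, binary products, equalizers, sups, images) and verifies each of these schemas in $\widehat{\T}$ by explicit internal-language computations using the product and subsort axioms. Your proposed shortcut---assert that ``the original sequent holds in $\C_\T$ iff $\T\models\phi\wedge\alpha^\sharp\vdash\beta^\sharp$'' and then invoke Proposition~\ref{tildequiv}---hides exactly this work: to know that the unfolding $\alpha\mapsto\alpha^\sharp$ is faithful to satisfaction in $\C_\T$ you must already know that $\widehat{\T}$ interprets the canonical language as a genuine coherent-category model, i.e.\ that composition, finite limits, images and sups are respected by the $\tau_\theta$-definitions. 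Your ``main obstacle'' paragraph correctly identifies composition (the commutative-diagram case) as the crux, but the limit and image cases are of comparable weight and cannot be absorbed into an induction ``parallel to Lemma~\ref{tilde}''; they require the specific diagram-chasing arguments the paper gives. In short, your translation is the right dictionary, but establishing that it \emph{is} a dictionary is the content of the proof, and the paper's axiom-by-axiom verification is how that content gets discharged.
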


\begin{proof}
We need to prove that each sequent in $\widehat{\T}$ is derivable from sequents in $\widetilde{\T}$ and vice versa.

$\widetilde{\T} \subset \widehat{\T}$: We take each of the axioms of $\widetilde{\T}$ and show that they are derivable in $\widehat{\T}$.

\textbf{Commutative Diagrams}: We have $\widetilde{\T} \models \top \vdash_{x \colon \vec{S}_\phi} [\alpha](x) = [\gamma] ([\theta] (x))$ for every commutative diagram
\[
\xymatrix{
&\vec{T}_{\psi} \ar[rd]^{[\gamma]} \\
\vec{S}_\phi \ar[rr]^{[\alpha]} \ar[ur]^{[\theta]} & &\vec{Q}_{\chi}
}
\]
in $\C_\T$. Such a diagram is commutative if and only if 
\[
\T \models \alpha(\vec{x},\vec{z}) \dashv \vdash_{\vec{x},\vec{z}} \exists \vec{y} \colon \vec{T} (\theta (\vec{x},\vec{y}) \wedge \gamma (\vec{y},\vec{z}))
\]
by the definition of the  syntactic category $\C_\T$. This means that $[\alpha]$ and $[\exists y \theta \wedge \gamma]$ are the same function symbol in $\widehat{\T}$ since by construction we consider these function symbols only up to $\T$-provable equivalence. Starting with $\tau_{\exists y(\gamma \wedge \theta)}$ we thus have the following derivation:
\begin{align*}
\widehat{\T} \models [\exists y(\theta \wedge \gamma)] (s) = q &\dashv \vdash_{s,q} \exists \vec{x} \exists \vec{z} (\xi(s,\vec{x},q,\vec{z}) \wedge \exists \vec{y} (\theta \wedge \gamma)) \\
&\dashv \vdash_{s,q} \exists \vec{x} \exists \vec{z} \exists \vec{y} (\xi(s,\vec{x},q,\vec{z}) \wedge (\theta \wedge \gamma)) \tag{\ref{eq:frob}}\\
&\dashv \vdash_{s,q} \exists \vec{x} \exists \vec{z} \exists \vec{y} \exists t ((\xi(s,\vec{x},t, \vec{y}) \wedge \theta) \wedge (\xi(q,\vec{z},t, \vec{y}) \wedge \gamma)) \tag{\ref{codex},\ref{codefunc}} \\
&\dashv \vdash_{s,q} \exists t ([\gamma](t) = q \wedge [\theta](s) = t) \tag{$\tau_\theta, \tau_\gamma$} \\
&\dashv \vdash_{s,q} [\gamma] ([\theta](s)) = q \tag{\ref{etran}}
\end{align*}
So we have
$
\widehat{\T} \models [\exists y(\theta \wedge \gamma)] (s) = q \dashv \vdash_{s,q} [\gamma] ([\theta](s)) = q 
$
and by substituting $[\gamma] ([\theta](s))$ for $q$ in the above sequent we get
$$
\widehat{\T} \models \top \vdash_s [\exists y(\theta \wedge \gamma)] (s) = [\gamma] ([\theta](s)) \equiv \top \vdash_s [\alpha] (s) = [\gamma] ([\theta](s))
$$
as required.


\textbf{Finite Limits}: Next we must deal with all the sequents in $\widetilde{\T}$ expressing that a diagram is a (finite) limit diagram in $\C_\T$. 
For terminal objects we know that an object $1$ in $\C_\T$ is terminal if and only if $\C_\T$ satisfies
$
\top \vdash \exists x \colon 1 (x=x)
$
and
$
\top \vdash_{x,x' \colon 1} x=x'
$
. But those are exactly the sequents that $1$ satisfies as a ``terminal object sort'' in $\widehat{\T}$. So $\widehat{\T}$ also satisfies these sequents.
For binary products, let $\vec{S}_\phi$ and $\vec{T}_\psi$ be arbitrary sorts in $\Sigma_\T$ (i.e. objects in $\C_\T$). Then we have that 
\[
\widetilde{\T} \models p_1 (r) = p_1 (r') \wedge p_2 (r) = p_2 (r') \vdash_{r,r' \colon (\vec{S}\times\vec{T})_{\phi \wedge \psi}} r=r'
\]
\[
\widetilde{\T} \models \top \vdash_{s\colon \vec{S}_\phi, t \colon \vec{T}_\psi} \exists z (p_1(z) = s \wedge p_2 (z) = t)
\]
with projections given by $p_1 \equiv [\phi \wedge \psi]$ and $p_2 \equiv [\phi \wedge \psi]$. In order to show that $\widehat{\T}$ also satisfies these sequents, first note that the following diagram commutes in $\C_\T$:
\[
\xymatrix{
\vec{S}_\phi \ar[d]^{i_\phi}  &(\vec{S} \times \vec{T})_{\phi \wedge \psi} \ar[d]^{i_{\phi \wedge \psi}} \ar[l]_{p_1} \ar[r]^{p_2} &\vec{T}_\psi \ar[d]^{i_\psi} \\
\vec{S}  &\vec{S} \times \vec{T} \ar[l]^{\pi_{\vec{S}}} \ar[r]_{\pi_{\vec{T}}} &\vec{T} 
}
\]
Now since $\vec{S}\times\vec{T}$ is a product sort in $\widehat{\T}$, we have
\[
\widehat{\T} \models \pi_{\vec{S}} (z) = \pi_{\vec{S}} (z') \wedge \pi_{\vec{T}} (z) = \pi_{\vec{T}} (z') \vdash_{z,z'} z=z'
\]
By substitution we thus get
\[
\widehat{\T} \models \pi_{\vec{S}} (i_{\phi \wedge \psi} (r)) = \pi_{\vec{S}} (i_{\phi \wedge \psi} (r')) \wedge \pi_{\vec{T}} (i_{\phi \wedge \psi} (r)) = \pi_{\vec{T}} (i_{\phi \wedge \psi} (r')) \vdash_{r,r'} i_{\phi \wedge \psi} (r) = i_{\phi \wedge \psi} (r')
\]
Let's call the above sequent $\sigma$. From the fact that the above diagram commutes we know that 
\[
\widetilde{\T} \models \top \vdash_r i_\phi(p_1(r)) = \pi_{\vec{S}}(i_{\phi \wedge \psi}(r)), \top \vdash_r i_\psi(p_2(r)) = \pi_{\vec{T}}(i_{\phi \wedge \psi}(r))
\]
and therefore from part (1) of the Theorem proven above about commutative diagrams we get 
\[
\widehat{\T} \models \top \vdash_r i_\phi(p_1(r)) = \pi_{\vec{S}}(i_{\phi \wedge \psi}(r)), \top \vdash_r i_\psi(p_2(r)) = \pi_{\vec{T}}(i_{\phi \wedge \psi}(r)) \tag{$*$} \label{star}
\]
From the two sequents above combined with $\sigma$ we get
\[
\widehat{\T} \models (i_\phi(p_1(r)) = i_\phi(p_1(r'))) \wedge (i_\psi(p_1(r)) = i_\psi(p_1(r'))) \vdash_{r,r'} i_{\phi \wedge \psi} (r) = i_{\phi \wedge \psi} (r') 
\]
But $i_\phi, i_\psi$ and $i_{\phi \wedge \psi}$ are such that the ``injectivity'' sequent (\ref{eq:sub2}) is satisfied in $\widehat{\T}$ -- and this gives us that 
\[
\widehat{\T} \models p_1 (r) = p_1 (r') \wedge p_2 (r) = p_2 (r') \vdash_{r,r' \colon (\vec{S}\times\vec{T})_{\phi \wedge \psi}} r=r'
\]
as required.

On the other hand, the fact that $\widehat{\T} \models (\ref{eq:prod1})$ for the relevant product gives us
\[
\widehat{\T} \models \top \vdash_{x,y} \exists r \colon \vec{S} \times \vec{T} (\pi_{\vec{S}} (r) = i_\phi (x) \wedge \pi_{\vec{T}}(r) = i_\psi (y)) \label{eq:seq2} \tag{$\dag$}
\]
By (\ref{eq:sub1}) we have that
\[
\widehat{\T} \models \widetilde{\phi \wedge \psi} \dashv \vdash_{r \colon \vec{S} \times \vec{T}} \exists z \colon (\vec{S} \times \vec{T})_{\phi \wedge \psi} (i_{\phi \wedge \psi} (z) = r)
\]
But by definition we know that
\[
\widetilde{\phi \wedge \psi}(r) \equiv \exists s \colon \vec{S} \exists t \colon \vec{T} (\xi(r,s,t) \wedge \tilde{\phi} (s) \wedge \tilde{\psi} (t))
\] 
and and by applying (\ref{eq:sub1}) to the RHS of the above for the conjuncts $\widetilde{\phi}$ and $\widetilde{\psi}$ we get that
\begin{align*}
\widehat{\T} \models \widetilde{\phi \wedge \psi}(r) &\dashv \vdash_r \exists s \colon \vec{S} \exists t \colon \vec{T} (\xi(r,s,t) \wedge \exists x (i_\phi(x) = s) \wedge \exists y (i_\psi(y)=t)) \\
&\dashv \vdash_r \exists s \exists t \exists x \exists y (\xi(r,s,t) \wedge i_\phi(x) = s \wedge i_\psi(y)=t) \tag{\ref{eq:frob}}
\end{align*}
Now the RHS of the above sequent is clearly implied by the RHS of (\ref{eq:seq2}), which means then that the RHS of (\ref{eq:seq2}) implies $\widetilde{\phi \wedge \psi}$, i.e. we can derive from (\ref{eq:seq2}) the following
\begin{align*}
\widehat{\T} \models  \top &\vdash_{x,y} \exists r \colon \vec{S} \times \vec{T} (\pi_{\vec{S}} (r) = i_\phi (x) \wedge \pi_{\vec{T}}(r) = i_\psi (y) \wedge \widetilde{\phi \wedge \psi}(r)) \\
&\vdash_{x,y} \exists r (\pi_{\vec{S}} (r) = i_\phi (x) \wedge \pi_{\vec{T}}(r) = i_\psi (y) \wedge \exists z (i_{\phi \wedge \psi} (z) = r)) \tag{\ref{eq:sub1}} \\ 
&\vdash_{x,y} \exists r \exists z (\pi_{\vec{S}} (r) = i_\phi (x) \wedge \pi_{\vec{T}}(r) = i_\psi (y) \wedge  i_{\phi \wedge \psi} (z) = r) \tag{\ref{eq:frob}}\\ 
&\vdash_{x,y} \exists z (\pi_{\vec{S}} (i_{\phi \wedge \psi} (z)) = i_\phi (x) \wedge \pi_{\vec{T}}(i_{\phi \wedge \psi} (z)) = i_\psi (y)) \tag{subst}\\
&\vdash_{x,y}  \exists z ( i_\phi (p_1 (z)) = i_\phi (x) \wedge i_\psi(p_2 (z)) = i_\psi (y)) \tag{\ref{star}}\\
&\vdash_{x,y} \exists z (p_1 (z) = x \wedge p_2 (z) = y) \tag{\ref{eq:sub2}}
\end{align*}
which gives us the desired result. 
 An exactly analogous (and even simpler) argument as the above, gives us the required result for sequent in $\widetilde{\T}$ expressing that a diagram is an equalizer. We omit the details.

\textbf{Sups and Images}: Analogous, and omitted. 
%
%

\textbf{Explicit Definitions}: This follows easily since each of the sequents $\mu_f, \nu_R, \delta_{\vec{S},i}, \epsilon_{\vec{S}, \phi}, \zeta_S$ and $\iota_S$ in $\widetilde{\T}$ can easily be derived as special instances of the sequent $\tau_\theta$ for particular choices of $\theta$. For example if $f \colon \vec{S} \rightarrow T$ is a function symbol 
we can use
$
\tau_{f(\vec{x})=y} \equiv [f(\vec{x})=y](s)=t \dashv \vdash_{s,t} \exists \vec{x} \exists y (\xi(s,\vec{x}) \wedge \xi(t,y) \wedge f(\vec{x}) =y)
$
to derive $\mu_f$ by adding and eliminating quantifiers in accordance with properties of product sorts.
The remaining cases follow similarly, taking care of the usual degenerate cases. We omit the details.


$\widehat{\T} \subset \widetilde{\T}$: Similarly, we take each of the axioms of $\widehat{\T}$ and show that they are derivable in $\widetilde{\T}$.

\textbf{Sequents in $\T$}: We have that $\widehat{\T} \models \phi \vdash \psi$ whenever $\T \models \phi \vdash \psi$ since $\widehat{\T}$ is an extension of $\T$. Now whenever $\T \models \phi \vdash \psi$ we know by (\cite{Elephant}, Lemma D1.4.4(iv)) that in $\C_\T$ we have a morphism
$
\vec{S}_\phi \hookrightarrow \vec{S}_\psi
$
and moreover that this morphism is given by $[\phi]$ and that it fits in the following commutative triangle
\[
\xymatrix{
& \vec{S} \\
\vec{S}_\phi \ar[ru]^{i_\phi} \ar[rr]^{[\phi]} & &\vec{S}_{\psi} \ar[lu]_{i_\psi}
}
\]
Since $\widetilde{\T}$ is an extension of $\T_{\C_\T}$ we have that
$
\widetilde{\T} \models \top \vdash_{y \colon \vec{S}_\phi} i_\phi (y) = i_\psi ([\phi] (y))
$
and from this and $\wedge$-introduction and substitution it easily follows that
$
\widetilde{\T} \models \exists y \colon \vec{S}_\phi (i_\phi (y) = s) \vdash_{s} \exists z \colon \vec{S}_\psi (i_\psi (z) = s) 
$.
By Lemma \ref{tilde} followed by 
Proposition \ref{tildequiv} we then get
$
\widetilde{\T} \models \phi \vdash_{\vec{x}} \psi
$
as required.



 \textbf{Sequents for subsorts}: 
We use a similar argument as in the case of products. Let $\vec{S}_\phi \in \widehat{\Sigma}\textbf{-Sort}$ be a subsort of $\vec{S} \in \Sigma_1\textbf{-Sort}$ with injection $i_\phi \colon \vec{S}_\phi \rightarrow \vec{S}$. Then $\widehat{\T} \models (\ref{eq:sub1}), (\ref{eq:sub2})$. Now in $\C_\T$ we have the following mono
$
[\phi] \colon \vec{S}_\phi \hookrightarrow \vec{S} 
$
. By sequents $\epsilon_{\vec{S}, \phi}$ we may write $[\phi]$ as $i_\phi$. Thus by soundness as above, we get that
$
\widetilde{\T} \models i_\phi (s) = i_\phi (s') \vdash s = s
$
which is of course exactly the sequent (\ref{eq:sub2}). Now from Lemma \ref{tildequiv} we get 
$
\widetilde{\T} \models \widetilde{\phi} \dashv \vdash_{s \colon \vec{S}} \exists y \colon \vec{S}_\phi (i_\phi(y) = s)
$
which (given our convention of suppressing the tilde in subscripts) is exactly (\ref{eq:sub1}).

 \textbf{Sequents for product sorts}: Analogous to subsorts, and omitted.


\textbf{Explicit definitions}: We have that $\widehat{\T} \models \tau_\theta$ for all $\theta$ such that $[\theta] \in \widehat{\Sigma} \setminus \Sigma\textbf{-Fun}$. We need to show that $\widetilde{\T}$ also satisfies these sequents. To do so, first recall that $\theta$ is a $\Sigma$-formula in context $\vec{S}\times\vec{T}$, i.e. $\theta \in \Sigma(\vec{S} \times \vec{T})$. Assume that it is a $\T$-provable functional relation between $\phi \in \Sigma (\vec{S})$ and $\psi \in \Sigma(\vec{T})$, i.e. a morphism $[\theta] \colon \vec{S}_\phi \rightarrow \vec{T}_\psi$ in $\C_\T$. Thus we have
$
\widetilde{\theta}(r) \equiv \exists \vec{x},\vec{y} \colon \vec{S} \times \vec{T} (\xi(r,\vec{x},\vec{y}) \wedge \theta(\vec{x},\vec{y}))
$
and by Lemma \ref{tilde} we get
$
\widetilde{\T} \models \widetilde{\theta} \dashv \vdash_{r \colon \vec{S} \times \vec{T}} \exists z \colon (\vec{S} \times \vec{T})_\theta (i_{\theta}(z)=r)	
$
and therefore
\[
\widetilde{\T} \models \theta(\vec{x},\vec{y}) \dashv \vdash_{\vec{x},\vec{y}} \exists z \colon (S \times T)_\theta (\xi(\vec{x},\pi_{\vec{S}}(i_\theta(z)))  \wedge \xi(\vec{y},\pi_{\vec{T}}(i_\theta(z))))) \label{eq:tild} \tag{$\dag$}
\]
Now note that the following diagram commutes in $\C_\T$:
\[
\xymatrix{
&\vec{S} &\vec{S} \times \vec{T} \ar[l]_{\pi_{\vec{S}}} \ar[r]^{\pi_{\vec{T}}} & \vec{T} \\
\vec{S}_\phi \ar@/_30pt/[rrrr]_{[\theta]} \ar[ru]^{i_\phi} &(\vec{S} \times \vec{T})_{\phi \wedge \vec{y}=\vec{y}} \ar[l]_{[\phi]} \ar[ru]^{i_{\phi \wedge \vec{y}=\vec{y}}} &(\vec{S}\times\vec{T})_\theta \ar[l]_{[\theta]_1} \ar[r]^{[\theta]_2} \ar[u]^{i_\theta} &(\vec{S}\times\vec{T})_{\psi \wedge \vec{x}=\vec{x}} \ar[lu]_{i_{\psi \wedge \vec{x}=\vec{x}}} \ar[r]^{[\psi]} &\vec{T}_\psi \ar[lu]_{i_\psi} \\
}
\]
where $[\theta]_1$ and $[\theta]_2$ are the unique maps guaranteed to exist by (\cite{Elephant}, D1.4.4.(iv)) since $\T \models \theta \vdash \phi$ and $\T \models \theta \vdash \psi$. From the bottom part of the diagram and the fact that $\theta$ is a functional relation we thus get
\[
\widetilde{\T} \models [\theta](s)=t \dashv \vdash_{s,t} \exists z \colon (\vec{S} \times \vec{T})_\theta (s=p_1(z) \wedge t=p_2(z))
\]
where $p_1 = [\phi] \circ [\theta]_1$ and $p_2 = [\psi] \circ [\theta]_2$. Now we have the following derivation:
\begin{align*}
\widetilde{\T} \models [\theta](s)=t &\dashv \vdash_{s,t} \exists z \colon (\vec{S} \times \vec{T})_\theta (s=p_1(z) \wedge t=p_2(z)) \\
&\dashv \vdash \exists z \colon (\vec{S} \times \vec{T})_\theta (i_\phi (s)= i_\phi (p_1(z)) \wedge i_\psi(t)=i_\psi(p_2(z))) \tag{\ref{eq:sub2}} \\
&\dashv \vdash \exists z \colon (\vec{S} \times \vec{T})_\theta (i_\phi (s)= \pi_{\vec{S}}(i_\theta(z)) \wedge i_\psi(t)=\pi_{\vec{T}}(i_\theta(z))) \tag{comm} \\
&\dashv \vdash \exists \vec{x} \colon \vec{S} \exists \vec{y} \colon \vec{T} \exists z \colon (S \times T)_\theta (\xi(\vec{x},i_\phi(s)) \wedge \xi(\vec{y},i_\psi(t)) \tag{\ref{codex}}\\ &\wedge \xi(\vec{x},\pi_{\vec{S}}(i_\theta(z)))  \wedge \xi(\vec{y},\pi_{\vec{T}}(i_\theta(z)))) \\ 
&\dashv \vdash \exists \vec{x} \colon \vec{S} \exists \vec{y} \colon \vec{T} (\xi(\vec{x},i_\phi(s)) \wedge \xi(\vec{y},i_\psi(t)) \tag{\ref{eq:frob}}\\  &\wedge (\exists z \colon (S \times T)_\theta (\xi(\vec{x},\pi_{\vec{S}}(i_\theta(z)))  \wedge \xi(\vec{y},\pi_{\vec{T}}(i_\theta(z)))))) \\  
&\dashv \vdash \exists \vec{x} \colon \vec{S} \exists \vec{y} \colon \vec{T} (\xi(\vec{x},i_\phi(s)) \wedge \xi(\vec{y},i_\psi(t)) \wedge \theta(\vec{x},\vec{y})) \tag{$\dag$}
\end{align*}
where by (comm) we indicate the fact that the relevant sequent is satisfied since the above diagram commutes in $\C_\T$. 


%


\end{proof}

\begin{cor}\label{ttct}
For any coherent theory $\T$, $\T$ is T-Morita equivalent to $\T_{\C_\T}$.
\end{cor}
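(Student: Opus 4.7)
The plan is to assemble the corollary directly from the two explicit Morita chains already constructed plus Proposition \ref{hateqtil}. The construction of $\widehat{\T}$ exhibits a Morita chain $\T = \T_0, \T_1, \widehat{\T}$ (the first step adds the product sorts, the second step adds the subsort, singleton, initial, functional-relation and compound-projection symbols), and the construction of $\widetilde{\T}$ exhibits a Morita chain $\T_{\C_{\T}}, \widetilde{\T}$ (adding explicit definitions for new function and relation symbols). So the entire work has been to produce, at the ``other end'' of each chain, a theory that we can identify: that is exactly what Proposition \ref{hateqtil} gives, namely $\widehat{\T}$ and $\widetilde{\T}$ are logically equivalent.

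First I would unwind the definition of T-Morita equivalence (Definition \ref{Tmor}) to recall that it is the transitive closure, modulo renaming of shared signature symbols, of the existence of a Morita span. Then I would explicitly assemble the Morita span
\[
\T = \T_0 \to \T_1 \to \widehat{\T} = \widetilde{\T} \leftarrow \T_{\C_{\T}},
\]
where ``$=$'' denotes logical equivalence as guaranteed by Proposition \ref{hateqtil} and each single arrow is a Morita extension by the constructions of $\widehat{\T}$ and $\widetilde{\T}$. This exhibits $\T$ and $\T_{\C_{\T}}$ as pre-T-Morita equivalent, provided their signatures are disjoint; if they are not (which in general they will not be, since $\Sigma_{\C_{\T}}$ reuses symbols from $\Sigma$), then I would first pass to a renamed copy $\T_{\C_{\T}}^{\,\prime}$ with signature disjoint from $\Sigma$, apply the span to it, and invoke Definition \ref{Tmor} to conclude T-Morita equivalence of $\T$ and $\T_{\C_{\T}}$.

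There is essentially no obstacle in this last step: the only thing one must verify is that the two Morita chains displayed in the constructions of $\widehat{\T}$ and $\widetilde{\T}$ really are Morita chains in the sense of Definition \ref{morex}, and this was already noted in each construction (each step introduces only new sort symbols with the appropriate defining sequents of product, subsort, singleton or initial type, or explicit definitions of new function and relation symbols in the style of \cite{Hodges}). The main conceptual work was done in Proposition \ref{hateqtil}, which ensures that the two chains can be glued at their endpoints. Hence the corollary is essentially a bookkeeping consequence of the previous results, and the proof will be a short paragraph rather than a further induction.
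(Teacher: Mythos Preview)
Your proposal is correct and follows essentially the same approach as the paper: assemble the Morita span from the two constructed chains and glue them via Proposition~\ref{hateqtil}. Your additional remark about the possible need to rename shared signature symbols before invoking Definition~\ref{Tmor} is a careful point that the paper's one-line proof glosses over, but otherwise the arguments coincide.
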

\begin{proof}
By construction, there is a Morita chain from $\T$ to $\widehat{\T}$ and a Morita chain from $\T_{\C_\T}$ to $\widetilde{\T}$ and from Proposition \ref{hateqtil} we have that $\widehat{\T}$ and $\widetilde{\T}$ are logically equivalent.
\end{proof}


We are now finally ready to prove Theorem \ref{main2}.

\begin{proof}[Proof of Theorem \ref{main2}]
Let $\T$ and $\Tp$ be J-Morita equivalent coherent theories. By Corollary \ref{ttct} we know that $\T$ is T-Morita equivalent to $\T_{\C_\T}$ and  $\Tp$ is T-Morita equivalent to $\T_{\C_{\Tp}}$. But since $\T$ and $\Tp$ are J-Morita equivalent, by Proposition \ref{tct} we get that $\T_{\C_\T}$ and $\T_{\C_\Tp}$ are T-Morita equivalent. 
\end{proof}

Combining Theorem \ref{main} and Theorem \ref{main2} we have thus arrived at the desired characterization:

\begin{theo}\label{TiffJ}
Two coherent theories $\T$ and $\Tp$ are T-Morita equivalent if and only if they are J-Morita equivalent.
\end{theo}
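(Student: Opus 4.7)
The plan is to observe that this biconditional is precisely the conjunction of Theorem \ref{main} (T-Morita implies J-Morita) and Theorem \ref{main2} (J-Morita implies T-Morita), both of which have been established. So the proof itself is a one-line combination; the substantive work has been done upstream, and I would simply cite those two results and appeal to transitivity of each equivalence relation where needed.

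For the forward direction via Theorem \ref{main}, the plan reduces (by the definition of T-Morita equivalence as the transitive closure of the Morita span relation, together with symmetry) to showing that a single Morita extension $\T_1 \subset \T_2$ induces an equivalence of classifying toposes. I would package this as an application of the Comparison Lemma to the inclusion $i \colon \C_{\T_1} \hookrightarrow \C_{\T_2}$: full faithfulness is Lemma \ref{fullfaith}, the induced topology coincides with the coherent topology by Lemma \ref{indeqcoh}, and the generation of $J_2$-covers from $\C_{\T_1}$-objects is Proposition \ref{comlemmhyp}. The technical engine behind all three is the ``recoding'' apparatus (Lemmas \ref{codex}, \ref{codefunc}, \ref{prekey}, \ref{key}) which rewrites any $\Sigma_2$-formula as a disjunction $\bigvee_j \exists\vec{y}_j(\xi_j \wedge \psi^*_j)$ of $\Sigma_1$-formulas paired with codes. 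I would emphasize that this is the hard part in Theorem \ref{main}.

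For the reverse direction via Theorem \ref{main2}, the argument I would run factors through the internal theory of the syntactic category. Corollary \ref{ttct} establishes that every coherent theory $\T$ is T-Morita equivalent to $\T_{\C_\T}$, by building two Morita chains $\T \rightsquigarrow \widehat{\T}$ and $\T_{\C_\T} \rightsquigarrow \widetilde{\T}$ and proving logical equivalence of $\widehat{\T}$ with $\widetilde{\T}$ (Proposition \ref{hateqtil}); Proposition \ref{tct} transports a J-Morita equivalence between $\T$ and $\Tp$ to a T-Morita equivalence between $\T_{\C_\T}$ and $\T_{\C_{\Tp}}$, using that pretopos completion is itself a Morita extension and that equivalent coherent categories have T-Morita equivalent internal theories (via a common skeleton). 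Chaining $\T \sim_T \T_{\C_\T} \sim_T \T_{\C_{\Tp}} \sim_T \Tp$ through Proposition \ref{Tmorer} finishes this direction.

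So the proof I would write is essentially: assume T-Morita equivalent, apply Theorem \ref{main} to conclude J-Morita equivalent; conversely, assume J-Morita equivalent, apply Theorem \ref{main2} to conclude T-Morita equivalent. There is no new obstacle at this final step — the genuine difficulties were Lemma \ref{key} on the T-to-J side and Proposition \ref{hateqtil} on the J-to-T side, both of which are already in place. The only thing to double-check when writing it out is that the two equivalence relations match up on the nose (not merely up to signature renaming), but this is exactly what Definition \ref{Tmor} and Proposition \ref{Tmorer} were arranged to guarantee.
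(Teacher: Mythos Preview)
Your proposal is correct and matches the paper's approach exactly: the paper proves Theorem \ref{TiffJ} in a single sentence by combining Theorem \ref{main} and Theorem \ref{main2}. Your additional recap of the upstream machinery (recoding lemmas for one direction, the $\widehat{\T}/\widetilde{\T}$ constructions and Proposition \ref{hateqtil} for the other) is accurate but not needed for the proof of this particular statement.
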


\begin{cor}
Two coherent theories $\T$ and $\Tp$ are T-Morita equivalent if and only if $\T\textbf{-Mod} (\E) \simeq \Tp\textbf{-Mod}(\E)$ naturally for any Grothendieck topos $\E$.
\end{cor}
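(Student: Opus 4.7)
By Theorem \ref{TiffJ}, it suffices to show that two coherent theories $\T$ and $\Tp$ are J-Morita equivalent (i.e.\ have equivalent classifying toposes) if and only if $\T\textbf{-Mod}(\E) \simeq \Tp\textbf{-Mod}(\E)$ naturally in any Grothendieck topos $\E$. The entire proof is then an application of the universal property of the classifying topos, and is essentially a 2-categorical Yoneda argument.

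For the forward direction, I would invoke the defining universal property of the classifying topos (cf.\ \cite{Elephant} D3.1.1 or \cite{MM} Chapter VIII): for any coherent theory $\T$ and any Grothendieck topos $\E$, there is an equivalence
\[
\T\textbf{-Mod}(\E) \simeq \textbf{Geom}(\E, \mathcal{E}[\T])
\]
natural in $\E$, where $\mathcal{E}[\T]$ denotes the classifying topos of $\T$. Thus if $\mathcal{E}[\T] \simeq \mathcal{E}[\Tp]$, composing the equivalences
\[
\T\textbf{-Mod}(\E) \simeq \textbf{Geom}(\E, \mathcal{E}[\T]) \simeq \textbf{Geom}(\E, \mathcal{E}[\Tp]) \simeq \Tp\textbf{-Mod}(\E)
\]
yields the desired natural equivalence in $\E$.

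For the converse direction, a natural equivalence $\T\textbf{-Mod}(\E) \simeq \Tp\textbf{-Mod}(\E)$ in $\E$ transfers along the same universal property to a natural equivalence
\[
\textbf{Geom}(\E, \mathcal{E}[\T]) \simeq \textbf{Geom}(\E, \mathcal{E}[\Tp]).
\]
I would then appeal to the 2-categorical Yoneda lemma in the 2-category of Grothendieck toposes and geometric morphisms, which guarantees that any pseudonatural equivalence between representable 2-functors $\textbf{Geom}(-, \mathcal{E}[\T])$ and $\textbf{Geom}(-, \mathcal{E}[\Tp])$ is induced by an equivalence $\mathcal{E}[\T] \simeq \mathcal{E}[\Tp]$, thereby establishing J-Morita equivalence.

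The only conceptually delicate point is the precise formulation of the naturality condition and the invocation of 2-Yoneda; once the universal property is on the table, both implications are essentially formal. There is no real obstacle: the corollary is a packaging of Theorem \ref{TiffJ} together with the definition of a classifying topos.
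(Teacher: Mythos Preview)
Your proposal is correct and is exactly the argument the paper leaves implicit: the corollary is stated without proof because it is immediate from Theorem \ref{TiffJ} together with the universal property of the classifying topos, and your write-up spells out precisely that. The only remark is that the paper does not explicitly invoke 2-Yoneda, but this is the standard way to justify the converse direction and there is nothing to object to.
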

%

\begin{cor}
Two coherent theories $\T$ and $\Tp$ are T-Morita equivalent if and only if they have equivalent pretopos completions.
\end{cor}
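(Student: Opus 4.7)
My plan is to reduce this corollary to Theorem \ref{TiffJ} by using the well-known fact that two coherent theories share the same classifying topos precisely when they share the same pretopos completion. The essential ingredient is already implicitly present in the excerpt: in the paragraph preceding Proposition \ref{tct} it is noted that the pretopos completion $\P_\T$ of $\C_\T$ is obtained from $\C_\T$ by successively adjoining finite coproducts and quotients of equivalence relations, and that each of these two steps is a Morita extension in our syntactic sense. In particular there is a Morita span between $\T_{\C_\T}$ and $\T_{\P_\T}$, so by Corollary \ref{ttct} and transitivity, $\T$ is T-Morita equivalent to $\T_{\P_\T}$.

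For the forward direction, suppose $\T$ and $\Tp$ are T-Morita equivalent. By Theorem \ref{TiffJ} they are J-Morita equivalent, i.e. their classifying toposes are equivalent. Since the classifying topos of a coherent theory is the sheaf topos on its pretopos completion with the coherent (= effective epimorphic) topology, and since by Makkai--Reyes (\cite{MakkaiReyes}, 8.4) a pretopos is recovered up to equivalence from its coherent sheaf topos as the full subcategory of coherent (= finitely presentable and decidable in the appropriate sense) objects, we conclude that $\P_\T \simeq \P_{\Tp}$ as pretoposes.

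For the backward direction, suppose $\P_\T \simeq \P_{\Tp}$ as pretoposes. Then their internal theories $\T_{\P_\T}$ and $\T_{\P_{\Tp}}$ are T-Morita equivalent by exactly the same argument as in Proposition \ref{tct} (pass to a common skeleton, whose internal theory is logically equivalent to each of the two). By the observation in the first paragraph, $\T$ is T-Morita equivalent to $\T_{\P_\T}$ and $\Tp$ is T-Morita equivalent to $\T_{\P_{\Tp}}$. Chaining these three T-Morita equivalences via Proposition \ref{Tmorer} yields that $\T$ and $\Tp$ are T-Morita equivalent.

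I do not expect any serious obstacle here; the only point requiring a small amount of care is the reconstruction of the pretopos completion from the classifying topos in the forward direction, which is a standard result I would simply cite rather than reprove. An alternative, entirely self-contained route avoids this citation by observing that the equivalence $\textbf{Sh}(\C_\T, J_\T) \simeq \textbf{Sh}(\P_\T, J_{\P_\T})$ (which follows directly from the Comparison Lemma, since $\C_\T \hookrightarrow \P_\T$ is coherent-dense) already shows J-Morita equivalence of $\T$ and $\T_{\P_\T}$; combining this with Theorem \ref{TiffJ} would allow one to phrase the whole proof purely in terms of J-Morita equivalence and then translate once at the end, which may in fact be the cleanest presentation.
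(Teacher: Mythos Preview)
Your argument is correct, and since the paper states this corollary without proof (treating it as an immediate consequence of Theorem~\ref{TiffJ}), your proposal supplies exactly the details the paper suppresses: namely, that J-Morita equivalence coincides with having equivalent pretopos completions, either via the standard reconstruction of the pretopos as the coherent objects of the classifying topos, or via the syntactic chain $\T \sim \T_{\C_\T} \sim \T_{\P_\T}$ already set up before Proposition~\ref{tct}. Both routes you sketch are sound and faithful to the paper's framework; the ``alternative, entirely self-contained'' version you mention at the end is arguably the cleanest, since it avoids the external citation and stays within the Comparison Lemma machinery the paper has already developed.
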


\section{Generalizations}\label{gen}



For geometric theories, if we extend the definition of a Morita extension to infinitary coproducts in the obvious way, and extend our deductive system to include the relevant rules for infinitary distributivity of disjunction over conjunction, then all the proofs in Sections \ref{tmortojmor} and \ref{jmortotmor} still go through unchanged, except finite disjunctions will be replaced by arbitrarily large ones. The only issue that arises is the issue of defining categories of sheaves on large sites since $\C_\T$ is no longer necessarily small. 
This is only a minor difficulty: essentially the fix is contained in what is said in the proof of (\cite{Elephant}, Lemma D1.4.10(iv)).
As such we obtain the analogues of Theorems \ref{main} and \ref{main2} for geometric logic, if in their statement we take the notion of Morita extension and Morita equivalence in the expanded sense, in which we include infinitary coproducts. 

\begin{theo}\label{TiffJgeom}
Let $\T_1$ and $\T_2$ be geometric theories. Then they are J-Morita equivalent if and only if they are T-Morita equivalent.
\end{theo}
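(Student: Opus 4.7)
The plan is to mimic the two directions of Theorem \ref{TiffJ} (i.e.\ Theorems \ref{main} and \ref{main2}), checking that each step of the coherent argument survives the passage to geometric logic once the deductive system is enlarged to include infinitary distributivity and Morita extensions are enlarged to include infinitary coproduct sorts. Throughout I would fix the convention that a coproduct sort $\coprod_{i \in I} S_i$ is now allowed for arbitrary (set-sized) $I$, with the obvious infinitary versions of (\ref{eq:cop1})--(\ref{eq:cop3}), and that the coverage $J_\T$ on $\C_\T$ is the geometric coverage, whose basic covers come from families $\{[\theta_i] \colon \lb \vec{x}_i.\phi_i \rb \to \lb \vec{y}.\psi \rb\}_{i \in I}$ with $\T \models \psi \dashv \vdash \bigvee_{i \in I} \exists \vec{x}_i \theta_i$.

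For the direction $T \Rightarrow J$, I would re-run the argument of Section \ref{tmortojmor}. Propositions \ref{cemor}, Lemmas \ref{fullfaith} and \ref{indeqcoh} go through verbatim since conservativity of Morita extensions, fullness/faithfulness of $i \colon \C_{\T_1} \hookrightarrow \C_{\T_2}$, and the matching of induced vs.\ geometric topology only use that the extended signature is conservative over the base. The recoding lemmas \ref{codex}, \ref{codefunc}, \ref{prekey} and \ref{key} are inherently disjunctive, and I would simply allow the disjunctions that appear to be indexed by arbitrary sets (coming from codes for variables of infinitary coproduct sort). The induction on complexity in Lemma \ref{key} now has to handle the connective $\bigvee_{i \in I}$ instead of just binary $\vee$, but this case is essentially the same as the binary one: $\bigvee_{i \in I}$ commutes with $\exists$ and distributes over $\wedge$ by the added axioms, so the code/formula decomposition propagates through. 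Proposition \ref{comlemmhyp} then gives a geometric cover of any $\lb \vec{y}.\psi \rb \in \C_{\T_2}$ by objects in the image of $i$, and the Comparison Lemma finishes the argument.

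For the direction $J \Rightarrow T$, I would re-run Section \ref{jmortotmor}, replacing the pretopos completion by the $\infty$-pretopos (or geometric pretopos) completion of $\C_\T$, which is obtained by adding set-indexed coproducts and quotients of equivalence relations. Each of these two steps is again an (infinitary) Morita extension, so there is a Morita span from $\T_{\C_\T}$ to $\T_{\P_\T}$, and hence an analogue of Proposition \ref{tct} holds: equivalent geometric syntactic categories have T-Morita equivalent internal theories. The constructions of $\widehat{\T}$ and $\widetilde{\T}$ carry over unchanged, where now $\widehat{\Sigma}$ contains a subsort $\vec{S}_\phi$ for every geometric formula $\phi$ and a function symbol for every $\T$-provably functional geometric relation. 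Lemma \ref{tilde} is proved by the same induction, with an added case for $\bigvee_{i \in I}$ treated identically to the finite disjunction case (using the sup of the corresponding subobjects in $\C_\T$, which exists geometrically). Proposition \ref{hateqtil} and Corollary \ref{ttct} then follow, and combining them with the geometric analogue of Proposition \ref{tct} yields the theorem.

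The one genuine obstacle, as the paper hints, is size: for a geometric $\T$ the category $\C_\T$ is in general only locally small, so some care is needed in invoking the Comparison Lemma in the $T \Rightarrow J$ direction and in defining the signature of $\widehat{\T}$ in the $J \Rightarrow T$ direction (which has a proper class of sorts indexed by geometric formulas). The standard fix, alluded to in the statement and in \cite{Elephant} D1.4.10, is to work with a small separating set of geometric formulas (for example, replacing $\C_\T$ by an equivalent small full subcategory closed under the operations we need, or passing to sheaves on a small dense subsite). Once this is done, the rest of the argument is word-for-word the same as in the coherent case, so the analogues of Theorems \ref{main} and \ref{main2} for geometric logic combine to give the desired biconditional.
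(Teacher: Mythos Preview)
Your proposal is correct and follows essentially the same approach as the paper: the paper's argument for Theorem \ref{TiffJgeom} consists precisely of the observation that the proofs in Sections \ref{tmortojmor} and \ref{jmortotmor} go through unchanged once finite disjunctions and coproducts are replaced by infinitary ones, with the only genuine obstacle being the size of $\C_\T$, handled via the small dense subsite argument of \cite{Elephant} D1.4.10(iv). You have simply spelled out in more detail which lemmas require the infinitary distributivity axioms and where the size issue bites, but the strategy is identical.
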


In the case of regular theories, we can define a \emph{regular Morita extension} (resp. \emph{regular T-Morita equivalence}) to be the same notion as described in Definition \ref{morex} but without coproduct sorts. Then we get:

\begin{theo}\label{regmain}
Let $\T$ and $\Tp$ be regular theories. Then they are J-Morita equivalent if and only if they are regular T-Morita equivalent.
\end{theo}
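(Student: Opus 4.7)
The plan is to mirror the proof of Theorem \ref{TiffJ} in the regular setting. The dropping of coproduct sorts from the definition of a regular Morita extension is exactly mirrored by the dropping of $\vee$ (and $\bot$) from the deductive system, and a systematic inspection of Sections \ref{tmortojmor} and \ref{jmortotmor} shows that neither disjunction nor coproducts was essentially used anywhere in the coherent proof except precisely to handle the coproduct cases that are now absent. For consistency the defining formula $\phi$ in the subsort clause of Definition \ref{morex} is also further restricted to be regular, so that the ``empty subsort'' clause is dropped and $0$-sorts are excluded from the start.

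For the direction regular T-Morita $\Rightarrow$ J-Morita, I would re-prove the chain of lemmas of Section \ref{tmortojmor} under the assumption that $\T_2$ is a regular Morita extension of $\T_1$. Since the only source of disjunction in codes was the coproduct case, every code is now a single conjunction and the outer $\bigvee_j$ vanishes from Lemmas \ref{codex}, \ref{prekey} and \ref{key}: each $\Sigma_2$-formula $\psi$ becomes $\T_2$-provably equivalent to a \emph{single} formula of the form $\exists \vec{y}(\xi(\vec{x},\vec{y}) \wedge \psi^*(\vec{\bar{x}},\vec{y}))$ with $\psi^*$ a regular $\Sigma_1$-formula. The inductions go through using only (\ref{etran}), (\ref{eq:frob}), routine $\wedge$-manipulation, (\ref{eq:sub1}), (\ref{eq:sub2}), (\ref{eq:prod1}), (\ref{eq:prod2}), (\ref{eq:quot1}) and (\ref{eq:quot2}); rules (\ref{eq:dist}) and (\ref{evee}), invoked in the coherent proof only to shuffle disjunctions produced by coproduct codes, are no longer needed. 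The regular analogue of Proposition \ref{comlemmhyp} then yields, for each object $\lb \vec{\bar{x}},\vec{x}.\psi \rb$ of $\C_{\T_2}$, a single arrow $[\theta]\colon \lb \vec{\bar{x}},\vec{y}. \psi^* \rb \to \lb \vec{\bar{x}},\vec{x}. \psi \rb$ whose image is the maximal subobject, i.e.\ a regular cover. Applying the Comparison Lemma to the regular syntactic sites (whose topologies are generated by single regular epis) gives equivalent sheaf categories, and hence equivalent regular classifying toposes.

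For the reverse direction, I would replay Section \ref{jmortotmor} in the regular setting. The constructions of $\widehat{\T}$ and $\widetilde{\T}$ already use only products, subsorts and quotients, so they remain regular Morita extensions. The analogue of Proposition \ref{tct} follows from the fact that the effective regular completion of a regular category is obtained by adjoining quotients of equivalence relations (\cite{MakkaiReyes}, 8.4(B)), which is itself a Morita extension using quotient sorts alone; hence any equivalence of regular classifying toposes yields, via the regular reflection, a span of regular Morita extensions between $\T_{\C_\T}$ and $\T_{\C_{\Tp}}$. The body of Proposition \ref{hateqtil} then goes through verbatim in regular logic: the ``Sups and Images'' paragraph now has no sups to verify, since regular logic has no disjunctions, and the remaining image-factorization argument is identical.

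The main obstacle, and hence the main thing to check, is the systematic verification that nowhere in the original coherent proofs is a disjunction or a coproduct genuinely needed outside of the coproduct case. The trickiest point is the existential-quantifier step in the induction of Lemma \ref{key}, where once all codes are single conjunctions one must confirm the step closes without any appeal to $\exists \vee$; this is routine but must be done explicitly. With this verification carried out and the small adjustments above made to Definition \ref{morex} (excluding empty subsorts and requiring $\phi$ to be regular), composing the two directions via the regular analogues of Corollary \ref{ttct} and Proposition \ref{tct} yields Theorem \ref{regmain}.
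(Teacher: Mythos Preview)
Your proposal is correct and follows essentially the same approach as the paper's proof sketch: both directions are obtained by replaying the coherent arguments with coproducts and disjunctions removed, and the one substantive change you flag in the necessity direction---replacing the pretopos completion by the effective (exact) completion, so that only quotient sorts are adjoined---is exactly what the paper singles out as the key modification (the paper points to \cite{Elephant}, 3.3.10, where you cite \cite{MakkaiReyes}, 8.4(B), but these describe the same construction). Your additional care in explicitly restricting the subsort-defining formula to be regular and in dropping the $0$-sort from the constructions of $\widehat{\T}$ and $\widetilde{\T}$ is a point the paper leaves implicit but which is indeed needed, since $\bot$ is not a regular formula.
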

\begin{proof}[Proof Sketch]
For sufficiency, the same proof strategy as Section \ref{tmortojmor} works again here. 
For necessity, we may once again repeat the constructions of Section $\ref{jmortotmor}$ dropping any mention of coproduct sorts. One important difference however is how we apply the argument that leads up to Proposition \ref{tct} because we now we need the fact that the effectivization of a regular category $\C$ is equivalent to adding quotients of equivalent relations (but not coproducts) to $\T_\C$. This fact -- to our knowledge -- is nowhere directly recorded in the literature, although it is an easy consequence of many well-known constructions. The reader is referred to (\cite{Elephant}, 3.3.10) where the construction of $\textbf{Eff}(\C)$ given there in the setting of allegories can easily be seen to involve the free addition to $\T_\C$ of quotient sorts for equivalence relations.
\end{proof}

In the case of cartesian theories, we say that two cartesian theories $\T$ and $\Tp$ are \emph{(cartesian) J-Morita equivalent} if and only if their syntactic categories $\C_\T$ and $\C_{\Tp}$ are equivalent. The corresponding notion of \emph{cartesian T-Morita extension} (resp. \emph{cartesian T-Morita equivalence}) can be defined as in Definition \ref{Tmor} except we allow only product sorts, subsorts as well as singleton sorts (i.e. sorts $T$ satisfying the sequents
$
\top \vdash \exists x \colon T (\top) \label{sing1} 
$
and
$
\top \vdash_{x,y \colon T} x=y \label{sing2} 
$
) since we can no longer obtain singletons as quotient sorts.

\begin{theo}\label{cartmain}
Let $\T$ and $\Tp$ be cartesian theories. Then they are J-Morita equivalent if and only if they are cartesian T-Morita equivalent.
\end{theo}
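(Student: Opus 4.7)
The plan is to follow the proof template established for the coherent case (Theorems \ref{main} and \ref{main2}) with three adjustments tailored to cartesian logic: (a) drop coproduct and quotient sorts, which forces every disjunction appearing in the recoding lemmas of Section \ref{tmortojmor} to collapse to a single conjunct; (b) replace the uses of quotient sorts in Section \ref{jmortotmor} that were there to manufacture the terminal object by the singleton-sort axioms directly, and omit the ``initial object sort'' entirely, since $\bot$ is not even a cartesian formula; (c) bypass the Comparison Lemma, since cartesian J-Morita equivalence is defined as equivalence of the syntactic categories themselves, so essential surjectivity of the inclusion $\C_{\T_1} \hookrightarrow \C_{\T_2}$ is precisely what must be produced.

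For the direction cartesian T-Morita $\Rightarrow$ J-Morita, suppose $\T_2$ is a cartesian Morita extension of $\T_1$. Lemma \ref{fullfaith} applies verbatim, since its proof only invokes conservativity. For essential surjectivity, specialize Lemmas \ref{prekey} and \ref{key} to the cartesian setting: with no coproduct case to handle, the $\bigvee_j$ in their conclusions is a single disjunct, and Lemma \ref{codefunc} together with the uniqueness built into (\ref{eq:prod2}) and (\ref{eq:sub2}) shows that every existential introduced by the recoding is $\T_2$-provably unique, so the output formulas are cartesian. Thus for any $\lb \vec{\bar{x}}, \vec{x}.\psi\rb$ in $\C_{\T_2}$ we obtain a $\Sigma_1$-formula $\psi^*$ and a code $\xi$ with $\T_2 \models \psi \dashv \vdash \exists \vec{y}(\xi \wedge \psi^*)$, and the single relation $\theta \equiv \xi \wedge \psi^*$ is provably functional in both directions (functionality from $\psi^*$ to $\psi$ is the second clause of Lemma \ref{key}; functionality the other way uses Lemma \ref{codefunc} and the existence clauses of the sort-defining axioms). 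Hence $[\theta]$ is an isomorphism $\lb \vec{\bar{x}}, \vec{y}.\psi^*\rb \xrightarrow{\sim} \lb \vec{\bar{x}}, \vec{x}.\psi\rb$ in $\C_{\T_2}$, whose domain lies in the image of $i$.

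For the converse, repeat Section \ref{jmortotmor} using only cartesian Morita extensions: in $\widehat{\T}$, restrict $\phi$ to range over cartesian $\Sigma$-formulas, add $1$ directly as a singleton sort rather than as a quotient, and omit all coproduct, quotient, and initial-sort sequents; the chain from $\T$ to $\widehat{\T}$ remains a cartesian Morita chain. In $\widetilde{\T}$, drop $\zeta_S$ and replace the terminal-via-quotient construction by the singleton-sort axioms $\iota_S$. The cartesian analogue of Proposition \ref{tct} is immediate, since $\C_\T$ is already a cartesian category and no analogue of the pretopos completion is required — an equivalence $\C_\T \simeq \C_{\Tp}$ yields T-Morita equivalent internal theories just by passing to a common skeleton. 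The cartesian version of Proposition \ref{hateqtil} is obtained by restricting its proof to the surviving cases (commutative diagrams, terminal objects, binary products, equalizers, subsorts, product sorts, explicit definitions); each goes through by exactly the internal-language argument already given. Combining these two directions proves the theorem.

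The main obstacle is the bookkeeping needed to verify that every existential produced by the cartesian recoding is genuinely provably unique, so that the intermediate formulas remain cartesian throughout the induction in Lemma \ref{key}. This reduces to checking that the basic codes for product, subsort, and singleton variables are functional in the sense of Lemma \ref{codefunc}, and that this functionality is preserved through the inductive steps for equality, relations, conjunction, and cartesian existentials. Once this uniqueness-tracking is confirmed, everything else is a direct transcription of the coherent proofs with the disjunctive and quotient cases excised.
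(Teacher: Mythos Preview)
Your proposal is correct and, for the necessity direction (J-Morita $\Rightarrow$ cartesian T-Morita), matches the paper's argument exactly: restrict the constructions of Section \ref{jmortotmor} to product sorts and subsorts, introduce the singleton sort axiomatically rather than as a quotient, drop the initial-sort material, and observe that Proposition \ref{tct} becomes trivial because cartesian J-Morita equivalence is already defined at the level of syntactic categories, so no pretopos-style completion intervenes.

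For the sufficiency direction the paper takes a shorter path than you do. Rather than re-running the recoding Lemmas \ref{prekey} and \ref{key} in full and tracking that every introduced existential remains provably unique, the paper observes that essential surjectivity of $i \colon \C_{\T_1} \hookrightarrow \C_{\T_2}$ reduces to exhibiting the two elementary isomorphisms
\[
\lb z \colon S \times T.\, \top \rb \;\cong\; \lb x \colon S,\, y \colon T.\, \top \rb
\qquad\text{and}\qquad
\lb s \colon S_\phi.\, \top \rb \;\cong\; \lb x \colon S.\, \phi \rb,
\]
which follow directly from (\ref{eq:prod1}), (\ref{eq:prod2}), (\ref{eq:sub1}), (\ref{eq:sub2}). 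Once these hold, any $\C_{\T_2}$-object is isomorphic to one whose context consists entirely of $\Sigma_1$-sorts, and then fullness --- which only invokes Lemma \ref{key} in the degenerate case of a pure $\Sigma_1$-context, where the code is $\top$ and no uniqueness bookkeeping arises --- finishes the job. Your route via the full recoding machinery also works: the bijectivity of product and subsort codes does make each recoded existential provably unique, and your single-$[\theta]$ isomorphism is exactly what one gets by composing the paper's basic isomorphisms. The paper's reduction simply sidesteps the inductive uniqueness-tracking you correctly identify as the main obstacle.
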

\begin{proof}[Proof Sketch]
For sufficiency it suffices to note that if $\T_2$ (over $\Sigma_2$) is a cartesian Morita extension of $\T_1$ (over $\Sigma_1$) then the induced inclusion functor $i \colon C_{\T_1} \hookrightarrow \C_{\T_2}$ as in the proof of Lemma \ref{fullfaith} is now actually essentially surjective, and therefore an equivalence 
This can easily be seen to reduce to proving that each $\C_{\T_2}$-object of the form $\lb z \colon S \times T. \top \rb$ or $\lb s \colon S_\phi. \top \rb$ is isomorphic to $\lb x \colon S, y \colon T. \top \rb$ and $\lb x \colon S. \phi \rb$ respectively and both these facts follow straightforwardly from the relevant sequents for product sorts and subsorts.

Conversely, as in Theorem \ref{regmain}, we merely have to note that in the construction of $\widehat{\T}$ and $\widetilde{\T}$ only product and subsorts are used except in the case of the singleton sort which was there defined as a quotient sort and which will here be defined directly as a singleton sort as described above. The relevant parts of the proof of Proposition \ref{tildequiv} (those involving only commutative diagrams and finite limits) then follow as before.
\end{proof}


In the case of first-order theories 
there is no longer a ``good'' notion of a classifying topos and as a result no pre-existing notion of J-Morita equivalence. Even though it is possible to construct a topos that contains a generic model of a first-order theory $\T$ (cf. \cite{Elephant} D.3.1.18) geometric morphisms into this topos no longer correspond to models of $\T$. 
One can of course rectify the situation by adding suitable extra conditions (cf. e.g. \cite{JB}).
On the other hand, in \cite{Barrett} Barrett and Halvorson have shown that if two (classical) first-order theories $\T, \Tp$ are T-Morita equivalent then $\T \textbf{-Mod}_e (\textbf{Set}) \simeq \T' \textbf{-Mod}_e (\textbf{Set})$. 
%
From the well-known process of Morleyization (\cite{Elephant}, D1.5.13) we know that for any classical first-order theory $\T$ over a signature $\Sigma$ there is a coherent theory $\T_{m}$ over a signature $\Sigma_m \supset \Sigma$ such that
$
\T \textbf{-Mod}_e (\S) \simeq \T_m \textbf{-Mod} (\S)
$
where $\S$ is any Boolean coherent category. This suggests the following:

\begin{theo}\label{morlTiffJ}
Let $\T$ and $\Tp$ be be first-order theories. Then they are T-Morita equivalent if and only if their Morleyizations $\T_m$ and $\Tp_m$ are J-Morita equivalent as coherent theories.
\end{theo}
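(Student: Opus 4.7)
The plan is to reduce this to the coherent case using Theorem \ref{TiffJ}. Applied to the coherent Morleyizations $\T_m$ and $\Tp_m$, that theorem says that they are J-Morita equivalent if and only if they are T-Morita equivalent as coherent theories. Hence the task reduces to showing: $\T$ and $\Tp$ are T-Morita equivalent as first-order theories if and only if $\T_m$ and $\Tp_m$ are T-Morita equivalent as coherent theories.

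For the easier ($\Leftarrow$) direction, I would first note that the Morleyization $\T_m$ is itself a first-order Morita extension of $\T$: it introduces no new sort symbols and only new relation symbols $R_\phi$ with sequents expressing $R_\phi \dashv \vdash \phi$, which is a classical definitional extension and hence a Morita extension in the sense of Definition \ref{morex}. Thus $\T$ and $\T_m$, and likewise $\Tp$ and $\Tp_m$, are first-order T-Morita equivalent. Since every coherent Morita extension is a fortiori a first-order Morita extension, coherent T-Morita equivalence of $\T_m$ and $\Tp_m$ propagates to first-order T-Morita equivalence, and transitivity (the first-order analogue of Proposition \ref{Tmorer}) then yields T-Morita equivalence of $\T$ and $\Tp$.

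For the harder ($\Rightarrow$) direction, by the transitive-closure definition of T-Morita equivalence it is enough to show that whenever $\T_2$ is a single first-order Morita extension of $\T_1$, the Morleyizations $(\T_1)_m$ and $(\T_2)_m$ are coherent T-Morita equivalent. I would exhibit an explicit coherent Morita chain from $(\T_1)_m$ to a theory logically equivalent to $(\T_2)_m$ in two stages: first, adjoin one at a time the new sort symbols of $\Sigma_2 \setminus \Sigma_1$ together with the sequents from Definition \ref{morex}; second, as a pure definitional extension, adjoin the remaining Morleyization relation symbols $R_\psi$ and their coherent defining sequents for the first-order $\Sigma_2$-formulas $\psi$ that involve the new sorts. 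For product, coproduct and quotient sorts the defining data from Definition \ref{morex} is already coherent, so the only subtlety is subsorts: a subsort in $\T_2$ may be defined by an arbitrary first-order $\Sigma_1$-formula $\phi$, but $(\T_1)_m$ already contains a coherent relation $R_\phi$ provably equivalent to $\phi$, and we simply take $R_\phi$ as the coherent defining formula of the subsort so as to remain inside the coherent fragment.

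The main obstacle will be verifying that the theory $\T''$ produced at the end of this chain is genuinely logically equivalent to $(\T_2)_m$, i.e.\ that both halves of each axiom $R_\psi \dashv \vdash \psi$ of $(\T_2)_m$ are derivable in $\T''$ and, conversely, that the sequents added along the chain are already derivable from $(\T_2)_m$. This amounts to a bookkeeping induction on the complexity of first-order $\Sigma_2$-formulas $\psi$, in which the Morleyization clauses are pushed through each connective and quantifier, and the subsort-injection axioms are used to substitute between $\phi$ and $R_\phi$ inside the scope of a subsort's defining data. Once this verification is in place, Theorem \ref{TiffJ} converts coherent T-Morita equivalence back into J-Morita equivalence, completing the proof.
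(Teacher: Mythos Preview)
Your approach is essentially the paper's: reduce to Theorem \ref{TiffJ}, show that Morleyization carries a first-order Morita extension to a coherent one by replacing each first-order defining formula $\phi$ by its coherent avatar $C_\phi$, and for the converse use that $\T$ and $\T_m$ are first-order T-Morita equivalent together with transitivity.

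Two small corrections are needed. First, $\T_m$ is not literally a definitional extension of $\T$: its axioms are coherent sequents such as $C_{\phi\wedge\psi}\dashv\vdash C_\phi\wedge C_\psi$ and $C_\phi\vdash C_\psi$ for each axiom $\phi\vdash\psi$ of $\T$, not sequents of the form $C_\phi\dashv\vdash\phi$. The paper isolates this as Proposition \ref{fomor} (via Lemma \ref{Mequiv}), showing that $\T_m$ is logically equivalent, as a first-order theory, to the definitional extension you describe. Second, it is not only subsorts whose defining data may fail to be coherent: quotient sorts carry a possibly first-order equivalence relation $\phi$ in (\ref{eq:quot1}), and new function and relation symbols in $\Sigma_2\setminus\Sigma_1$ may also be explicitly defined by first-order $\Sigma_1$-formulas. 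In each of these cases the same fix applies---replace $\phi$ by $C_\phi$---so your argument goes through once you extend the substitution uniformly.
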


Before proving the theorem we require some preliminary results.



\begin{lemma}\label{Mequiv}
Let $\T$ be a first-order theory over classical logic and $\T_m$ its Morleyization regarded as a first-order theory. Then
$
\T_m \models \phi \dashv \vdash_{\vec{x}} C_\phi 
$
, where $\vec{x}$ is the canonical context for $\phi$.
\end{lemma}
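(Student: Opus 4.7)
The plan is to proceed by structural induction on the complexity of the first-order formula $\phi$, exploiting the fact that the Morleyization axioms, although they are individually coherent sequents, together pin down each $C_\phi$ uniquely up to classical first-order equivalence with $\phi$. Recall that Morleyization introduces, for each first-order formula $\phi(\vec{x})$, a fresh relation symbol $C_\phi$ of arity matching the canonical context, and then adds coherent sequents that encode the recursive clauses of a truth-definition for $\phi$ in terms of its immediate subformulas.

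The base case consists of atomic formulas, for which $C_\phi$ is either the symbol $\phi$ itself or a directly defined abbreviation; the sequent $\phi \dashv \vdash_{\vec{x}} C_\phi$ is then immediate from the defining Morleyization axioms. For the inductive step I would first dispose of the coherent connectives $\wedge, \vee, \exists, \top, \bot$: in each case Morleyization adds a pair of coherent sequents (e.g.\ $C_{\phi \wedge \psi} \dashv \vdash C_\phi \wedge C_\psi$, and similarly $C_\phi \vee C_\psi \dashv\vdash C_{\phi \vee \psi}$, $\exists y\, C_\phi \dashv\vdash C_{\exists y \phi}$), so combining these with the inductive hypothesis applied to the immediate subformulas yields the desired biconditional directly.

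The genuinely interesting cases are $\neg, \rightarrow$ and $\forall$, where Morleyization only provides partial coherent axioms whose full force requires classical first-order reasoning. For negation, the Morleyization axioms take the form $C_\phi \wedge C_{\neg \phi} \vdash_{\vec{x}} \bot$ and $\top \vdash_{\vec{x}} C_\phi \vee C_{\neg \phi}$; once we are allowed to use the classical connective $\neg$ and the law of excluded middle in our deductive system, these two coherent sequents together entail $C_{\neg \phi} \dashv \vdash_{\vec{x}} \neg C_\phi$, and substituting the inductive hypothesis $\phi \dashv \vdash C_\phi$ on the right yields $C_{\neg \phi} \dashv\vdash \neg \phi$. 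Implication is handled either analogously via its Morleyization clauses or by rewriting $\phi \rightarrow \psi$ as $\neg(\phi \wedge \neg \psi)$ and reducing to the previous cases. For the universal quantifier, the axioms $C_{\forall y\, \phi}(\vec{x}) \vdash_{\vec{x},y} C_\phi(\vec{x},y)$ and $C_\phi(\vec{x},y) \vdash_{\vec{x}} C_{\forall y\, \phi}(\vec{x})$ — with the side condition that $y$ is not free in the conclusion of the second — combine in classical first-order logic to give $C_{\forall y\, \phi} \dashv \vdash \forall y\, C_\phi$, and again the inductive hypothesis closes the case.

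The main obstacle, and the only point at which care is needed, is the bookkeeping in the classical cases: one must keep track of the free-variable contexts (so that the canonical context of $\phi$ matches the arity assigned to $C_\phi$) and invoke classical reasoning precisely in the places where coherent deduction alone would be insufficient, namely in crossing from a pair of inconsistency/totality axioms for $\neg$ to an actual biconditional. Once this is done uniformly, the induction goes through for each first-order connective and the lemma follows.
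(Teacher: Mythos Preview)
Your overall strategy---structural induction on $\phi$---is exactly what the paper does, and your treatment of the coherent connectives is fine. But there is a genuine gap in your handling of $\forall$, and it stems from a mis-description of the Morleyization itself.

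The Morleyization used here (following \cite{Elephant}, D1.5.13) introduces \emph{two} fresh relation symbols $C_\phi$ and $D_\phi$ for every first-order $\phi$, with coherent axioms $\top \vdash_{\vec{x}} C_\phi \vee D_\phi$ and $C_\phi \wedge D_\phi \vdash_{\vec{x}} \bot$ for all $\phi$. The paper's proof begins by deriving from these, in classical logic, the auxiliary fact $D_\phi \dashv\vdash_{\vec{x}} \neg C_\phi$, and this is what drives the non-coherent cases. Your proposal never mentions the $D_\phi$ symbols; your negation clause effectively reinvents $D_\phi$ as $C_{\neg\phi}$, which is harmless there, but it leaves you without the right tool for $\forall$.

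Concretely, the sequent you write as ``$C_\phi(\vec{x},y) \vdash_{\vec{x}} C_{\forall y\phi}(\vec{x})$ with $y$ not free in the conclusion'' is not a well-formed sequent in this system: $y$ is free on the left but absent from the declared context. There is no coherent axiom of this shape in any Morleyization---the whole point of Morleyizing is that every axiom is a coherent sequent, and you cannot express ``for all $y$'' coherently on the $C$-side. The actual axiom is on the $D$-side: $D_{\forall y\phi} \dashv\vdash_{\vec{x}} \exists y\, D_\phi$, which \emph{is} coherent. The paper then argues
\[
C_{\forall y\phi} \;\dashv\vdash\; \neg D_{\forall y\phi} \;\dashv\vdash\; \neg\exists y\, D_\phi \;\dashv\vdash\; \neg\exists y\,\neg\phi \;\dashv\vdash\; \forall y\,\phi,
\]
invoking the auxiliary fact twice and the inductive hypothesis once. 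You could alternatively have reduced $\forall y$ to $\neg\exists y\neg$ (as you propose doing for $\rightarrow$), but as written your $\forall$ case does not go through.
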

\begin{proof}
This follows by a straightforward induction on the complexity of $\phi$. Before proceeding note also that
\[
\T_m \models D_\phi \dashv \vdash_{\vec{x}} \neg C_\phi  \label{eq:neg} \tag{neg}
\] 
since by construction we have
$\T_m \models \top \vdash C_\phi \vee D_\phi,  C_\phi \wedge D_\phi \vdash \bot$
and we are over classical logic.
Now if $\phi$ is atomic (including $\top$ or $\bot$) then the result holds by construction. For the inductive step, 
let us do only the case universal quantification since the rest follow similarly:
\begin{align*}
\T_m \models C_{\forall x \phi} &\dashv \vdash \neg D_{\forall x \phi} \tag{\ref{eq:neg}} \\
&\dashv \vdash \neg \exists x D_\phi \tag{by construction} \\
&\dashv \vdash \neg \exists x \neg \phi \tag{(\ref{eq:neg}),inductive hypothesis} \\
&\dashv \vdash \forall x \phi \tag{tautology}
\end{align*}
\end{proof}

\begin{prop}\label{fomor}
Any first order theory $\T$ is T-Morita equivalent to its Morleyization $\T_m$ when the latter is regarded as a first-order theory.
\end{prop}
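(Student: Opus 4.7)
The plan hinges on the observation that Morleyization introduces \emph{only new relation symbols and no new sort symbols}. Specifically, the signature $\Sigma_m$ extends $\Sigma$ by adjoining, for every first-order $\Sigma$-formula $\phi$, two relation symbols $C_\phi$ and $D_\phi$ (with arities matching the canonical context of $\phi$). Since $(\Sigma_m \setminus \Sigma)\textbf{-Sort} = \varnothing$, the sort-theoretic requirements in Definition \ref{morex} are vacuously satisfied, and the only thing to check in order to conclude that $\T_m$ is a (first-order) Morita extension of $\T$ is that $\T_m$ is a \emph{definitional extension} of $\T$ in the first-order sense. Once this is done, we obtain a one-step Morita chain $\T, \T_m$ and hence a trivial Morita span exhibiting $\T$ and $\T_m$ as (pre-)T-Morita equivalent.

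To establish the definitional-extension claim I would invoke Lemma \ref{Mequiv}, which gives $\T_m \models C_\phi \dashv \vdash_{\vec{x}} \phi$, together with the companion equivalence $\T_m \models D_\phi \dashv \vdash_{\vec{x}} \neg\phi$ recorded as (\ref{eq:neg}) in that proof. These supply the required explicit first-order definitions of $C_\phi$ and $D_\phi$ in terms of pure $\Sigma$-formulas. To make the comparison fully rigorous I would consider the auxiliary theory $\T^\star$ obtained from $\T$ by adjoining exactly the explicit definitions $C_\phi \dashv \vdash \phi$ and $D_\phi \dashv \vdash \neg\phi$ (one pair for each first-order $\Sigma$-formula), and show that $\T^\star$ and $\T_m$ are logically equivalent. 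One direction is Lemma \ref{Mequiv} itself; the other amounts to checking, by induction on complexity, that each inductive Morleyization clause (for instance $C_{\phi \wedge \psi} \dashv \vdash C_\phi \wedge C_\psi$, $D_{\exists x \phi} \dashv \vdash \forall x D_\phi$, and so on) follows from the explicit definitions via a first-order tautology. With $\T_m$ thus identified (up to logical equivalence) with a pure definitional extension of $\T$ that introduces no new sort symbols, the Morita extension property is immediate.

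The main obstacle, modest as it is, lies in the mild bookkeeping required to avoid circularity: Lemma \ref{Mequiv} is proved using the inductive Morleyization axioms, so when one uses it to exhibit $\T_m$ as a definitional extension one must be careful to run the complementary induction (deriving the Morleyization axioms from the explicit definitions in $\T^\star$) independently. Both inductions follow the same base-case/connective-case template as Lemma \ref{Mequiv} and present no real difficulty; everything else reduces to unpacking Definition \ref{morex} in the degenerate case $(\Sigma_m \setminus \Sigma)\textbf{-Sort} = \varnothing$.
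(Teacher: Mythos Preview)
Your proposal is correct and follows essentially the same approach as the paper: the auxiliary theory you call $\T^\star$ is exactly the paper's $\T_1$, and both arguments show it is a definitional (hence Morita) extension of $\T$ that is logically equivalent to $\T_m$, with Lemma \ref{Mequiv} handling the $\T^\star \subset \T_m$ direction and a direct verification of the Morleyization axioms handling the converse. The only minor point you gloss over is that the $\T^\star \subset \T_m$ direction also requires checking that $\T_m$ derives the original axioms of $\T$ (not just the explicit definitions), but this follows immediately from Lemma \ref{Mequiv} applied to the translated axioms $C_\phi \vdash C_\psi$ in $\T_m$.
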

\begin{proof}
We construct an extension ($\T_1$, $\Sigma_1$) of $\T$ as follows: we let $\Sigma_1 =\Sigma_m$, i.e. for every first-order formula $\phi$ over $\Sigma$ we add two relation symbols $D_\phi$ and $C_\phi$ of arity the same as the canonical context of $\phi$. We then define $\T_1$ as
$
\T_1 = \T \cup \lb C_\phi \dashv \vdash_{\vec{x}} \phi \vert \phi \in \Sigma\textbf{-foForm} \rb 
\cup \lb D_\phi \dashv \vdash_{\vec{x}} \neg \phi \vert \phi \in \Sigma\textbf{-foForm} \rb
$
where $\Sigma\textbf{-foForm}$ is the set of first-order $\Sigma$-formulas. Clearly $\T_1$ is a Morita extension (indeed a definitional extension) of $\T$ and it is over the same signature as $\T_m$. We claim that $\T_1$ and $\T_m$ are logically equivalent. 

$\T_1 \subset \T_m$: $\T_1$ consists of the axioms of $\T$ together with the explicit definitions of the new relation symbols as described above. So if $\phi \vdash_{\vec{x}} \psi$ is an axiom of $\T$, we know that $\T_m \models C_\phi \vdash_{\vec{x}} C_\psi$ and so by Lemma \ref{Mequiv} and substitution of equivalents we get that $\T_m \models \phi \vdash_{\vec{x}} \psi$. On the other hand, for every explicit definition $C_\phi \dashv \vdash \phi$ or $D_\phi \dashv \vdash \neg \phi$ in $\T_1$ we get that $\T_m$ satisfies them directly from Lemma \ref{Mequiv} and (\ref{eq:neg}) (as it appears in the proof of \ref{Mequiv}.)

$\T_m \subset \T_1$: If $C_\phi \vdash_{\vec{x}} D_\psi$ is an axiom of $\T_m$ then $\T \models \phi \vdash_{\vec{x}} \psi$ and therefore by the sequents explicitly defining $C_\phi$ and $C_\psi$ in $\T_1$ we get $\T_1 \models C_\phi \vdash_{\vec{x}} C_\psi$. 
Now we also have $\T_m \models \top \vdash_{\vec{x}} C_\phi \vee D_\phi$ and $\T_m \models C_\phi \wedge D_\phi \vdash_{\vec{x}} \bot$ for every first-order $\Sigma$-formula $\phi$. Since we are over classical logic we have:
\begin{align*}
\T_1 \models \top &\vdash_{\vec{x}} \phi \vee \neg \phi \tag{tautology} \\
&\vdash_{\vec{x}} C_\phi \vee D_\phi \tag{explicit definition axioms}
\end{align*}
We obtain
$
\T_1 \models C_\phi \wedge D_\phi \vdash_{\vec{x}}  \bot
$
exactly analogously. Finally, each of the sequents defining the relation symbols in $\T_m$ is also derivable in $\T_1$ immediately since the latter contains essentially exactly the same definitions. Let's do the universal quantifier as an illustration: we have that 
$
\T_m \models D_{\forall x\phi} \dashv \vdash_{\vec{y}} \exists x D_\phi
$.
To see that $\T_1$ also satisfies this sequent we can argue as follows:
\begin{align*}
\T_1 \models D_{\forall x \phi} &\dashv \vdash_{\vec{y}} \neg \forall x \phi \tag{axiom}\\
&\dashv \vdash_{\vec{y}}  \exists x \neg \phi \tag{tautology} \\
&\dashv \vdash_{\vec{y}} \exists x D_\phi \tag{axiom}
\end{align*}
\end{proof}

We are now ready to prove Theorem \ref{morlTiffJ}.

\begin{proof}(of Theorem \ref{morlTiffJ})
Let $\T_1$ (over $\Sigma_1$) be a first-order theory and $\T_2$ (over $\Sigma_2$) a Morita extension of $\T_1$. Let $\T_{1m}$ be the Morleyization of $\T_1$. Define $\T_{2m}$ over $\Sigma_2$ as follows. If $f$ is a function symbol in $\Sigma_2 \setminus \Sigma_1$\textbf{-Fun} explicitly defined by some $\Sigma_1$-formula $\phi$ as
$
\T_2 \models \top \vdash \forall \vec{x} \forall y (f(\vec{x})=y \leftrightarrow \phi(x,y))
$
then $\T_2$ contains the sequent
$
f(x)=y \dashv \vdash_{\vec{x},y} C_{\phi}(\vec{x},y)
$
. Similarly if $R$ is a relation symbol in $\Sigma_2 \setminus \Sigma_1$\textbf{-Rel} explicitly defined by some $\Sigma_1$-formula $\rho$ as 
$
\T_2 \models \top \vdash \forall \vec{x} (R\vec{x} \leftrightarrow \rho(\vec{x}))
$
then $\T_2$ contains the sequent
$
R \vec{x} \dashv \vdash_{\vec{x}} C_{\rho} (\vec{x})
$
. If $\T_2$ contains any new sort symbols then $\T_{2m}$ also contains the sequents attached to these new sort symbols -- since all these sequents are coherent, this poses no problem. Therefore $\T_{2m}$ is a Morita extension of $\T_{1m}$ and in fact it is not hard to see that $\T_{2m}$ is in fact exactly the Morleyization of $\T_2$ as usually constructed. Thus we may say that ``Morleyizing preserves Morita extensions.''  The only situation in which it is not immediate to verify this fact is if $\T_2$ contains subsorts or quotient sorts defined via first-order $\Sigma_1$-formulas. Take the case of subsorts as an illustration. Suppose that $S_\phi$ is a subsort in $\T_2$ defined via a $\Sigma_1$-formula $\phi$. This means that
$
\T_2 \models \phi(x) \dashv \vdash_{x \colon T} \exists y \colon S (i(y)=x)
$
and
$
\T_2 \models i(x)=i(y) \vdash_{x,y\colon S} x=y  
$
Therefore the Morleyization of $\T_2$ contains a sort symbol $S_\phi$ together with the sequents
$
C_\phi(x) \dashv \vdash_{x \colon T} \exists y \colon S (i(y)=x)
$
and
$
i(x)=i(y) \vdash_{x,y\colon S} x=y  
$
. But these last two sequents exactly define $S_\phi$ as a subsort of $S$ via the formula $C_\phi$. 
Since logically equivalent first-order theories have logically equivalent Morleyizations, this means that the Morita span from $\T$ to $\Tp$ induces a Morita span between $\T_m$ and $\Tp_m$. By Theorem \ref{TiffJ} this means that $\T_m$ and $\Tp_m$ are J-Morita equivalent.

Conversely, suppose that the Morleyizations of $\T_m$ and $\Tp_m$ of two first-order theories are J-Morita equivalent. By Theorem \ref{TiffJ} they are T-Morita equivalent. Clearly if two coherent theories are T-Morita equivalent as coherent theories then they are T-Morita equivalent as first-order theories. Thus $\T_m$ and $\Tp_m$ are T-Morita equivalent (as first-order theories.) But by Proposition \ref{fomor} we know that $\T$ and $\Tp$ are respectively T-Morita equivalent to $\T_m$ and $\Tp_m$, so we are done.
\end{proof}

\section{Applications and Implications}\label{appimp}

One of the main mathematical upshots of our result is that it gives a new way of generating Morita-equivalences between theories. As an indication of the possible mathematical rewards of this consider some recent remarks of L. Lafforgue \cite{Lafforgue} on the Langlands correspondence based on O. Caramello's work as best summarized in \cite{Caram}. 
Our result could also be another way to judge the viability of such an approach, on whose prospects we remain neutral. We intend to explore some tentative results in this direction in future work.



Another consequence of our result is that topos-theoretic invariants automatically become invariants of theories up to T-Morita equivalence. 
This is a kind of inversion of the philosophy expounded in \cite{Caram}. 
More precisely, given a topos-theoretic invariant $P$ 
(a property $P$ of a topos $\mathcal{E}$ invariant under geometric equivalences)
we automatically get a property $P'$ of a geometric theory $\T$ that is invariant under T-Morita equivalence. This is interesting in (at least) two ways. Firstly, it gives us a syntactic way of determining whether a property of a theory $\T$ corresponds to a topos-theoretic invariant (of the classifying topos of $\T$), by seeing whether or not it is invariant under T-Morita equivalence. 

\begin{exam}
For a trivial example, consider the property of being single-sorted. There are clearly examples of single-sorted (geometric) theories that are T-Morita equivalent to many-sorted ones. Automatically this means that there cannot be a topos-theoretic invariant property expressing the fact that the geometric theory corresponding to a particular topos is single-sorted.
\end{exam}

Secondly, it allows us to automatically conclude that existing topos-theoretic invariants correspond to T-Morita invariant properties of theories

\begin{exam}
It is known (cf. \cite{Caram2012}) that a complete geometric theory is countably categorical if and only if its classifying topos is atomic. Since being atomic is a topos-theoretic invariant, from Theorem \ref{TiffJgeom} this means that countable categoricity is invariant under T-Morita equivalence. 
\end{exam}


Finally T-Morita equivalence gives us a more precise way of understanding what is done by the following functors:
\[
\T \mapsto \C_\T \colon \textbf{CohTheo}  \rightleftarrows \textbf{PreTop} \colon \C \mapsto \T_\C
\]
In particular, taking a coherent theory to the pretopos completion of its syntactic category and then taking the internal theory of that pretopos (as a coherent category) gives us back a T-Morita equivalent theory. Since T-Morita equivalences are essentially spans of Morita extensions one can raise the question about whether \textbf{PreTop} can be characterized by a localization procedure on \textbf{CohTheo}. This would be a logical version of the localization constructions in \cite{Moerdijk1,Moerdijk2}. We plan to explore this idea in future work.

\subsection*{Acknowledgments} The author would like to thank Thomas Barrett, Hans Halvorson, Neil Dewar and Dan Dore for many helpful remarks during the preparation of this work.

\begin{bibdiv}
\begin{biblist}

%
%
%
%
%
%

%
\bib{PAT}{article}{
	title={Propositions as [Types]}
	author={S. Awodey}
	author={A. Bauer}
	journal={The Royal Swedish Academy of Sciences, Institut Mittag-Leffler}
	eprint={https://www.mittag-leffler.se/preprints/files/IML-0001-34.pdf}
}

\bib{JB}{article}{
	author={P. Johnstone}
	author={C. Butz}
	title={Classifying Toposes for First-Order Theories}
	journal={BRICS Reports series}
	year={1997}
}

\bib{AwoFors}{article}{
	title={First-Order Logical Duality}
	author={Awodey, S.}
	author={Forsell, H.}
	journal={arXiv:1008.3145v2}
	date={2013}
}

\bib{Lafforgue}{article}{
	title={La theorie de Caramello: un cadre en construction pour des correspondances du type de celle de Langlands?}
	author={Lafforgue, L.}
	date={2013}
	eprint={http://www.ihes.fr/~lafforgue/math/TheorieCaramello.pdf}
}

\bib{Caram}{article}{
	title={The unification of Mathematics via Topos Theory}
	author={O. Caramello}
	date={2010}
	journal={arXiv:1006.3930}
}

\bib{Caram2009}{article}{
	title={Lattices of Theories}
	author={O. Caramello}
	date={2009}
	journal={arXiv:math.CT/0811.3547}
}

\bib{Caram2012b}{article}{
	title={Universal Models and Definability}
	author={O. Caramello}
	date={2012}
	journal={Mathematical Proceedings of the Cambridge Philosophical Society}
	volume={152}
	number={2}
	pages={279-302}
}

\bib{Caram2012}{article}{
	title={Atomic Toposes and Countable Categoricity}
	author={O. Caramello}
	date={2012}
	journal={Applied Categorical Structures}
	volume={20}
	number={4}
	pages={379-391}
}

\bib{Hodges}{book}{
	author={W. Hodges}, 
	title={Model Theory},
  	year={1993},
  	publisher = {Cambridge University Press}
}

\bib{MakkaiReyes}{book}{
	title={First-Order Categorical Logic}
	author={Makkai, M.}
	author={Reyes, G.}
	publisher={Lecture Notes in Mathematics}
	date={1971}
}

\bib{MakkaiDD}{book}{
	title={Duality and Definability in First-Order Logic}
	author={Makkai, M.}
	series={Memoirs of the AMS, no. 503}
	date={1993}
}

\bib{MakkaiSD}{article}{
	title={Stone Duality for First-Order Logic}
	author={Makkai, M.}
	journal={Advances in Mathematics}
	volume={65}
	number={2}
	date={1987}
	pages={97-170}
}

\bib{MM}{book}{
	title={Sheaves in Geometry and Logic: A First Introduction to Topos Theory}
	author={I. Moerdijk}
	author={S. Mac Lane}
	publisher={Springer}
	series={Universitext}
	year={1994}
}

\bib{MoePal}{article}{
	title={Type Theories, Toposes and Constructive Set Theory: Predicative Aspects of AST}
	author={I. Moerdijk}
	author={E. Palmgren}
	journal={Annals of Pure and Applied Logic}
	volume={114}
	date={2002}
	pages={155-201}
}

\bib{Maietti}{article}{
	title={Modular correspondence between dependent type theories and categories including topoi and pretopoi}
	author={M. E. Maietti}
	journal={Mathematical Structures in Computer Science}
	volume={15}
	number={6}
	pages={1089-1149}
	date={2005}
}

\bib{MaiSam}{article}{
	title={Towards a Minimalist Foundation for Constructive Mathematics}
	author={M. E. Maietti}
	author={G. Sambin}
	journal={In:  \emph{From Sets and Types to Topology and Analysis} (eds. L. Crosilla and P. Schuster)}
	editors={L. Crosilla}
	publisher={Oxford University Press}
	pages={91-114}
	date={2005}
}

\bib{Moerdijk1}{article}{
	title={The classifying topos of a continuous groupoid I},
	journal={Transactions of the American Mathematical Society}
	author={Moerdijk, I.},
	volume={310},
	number={2},
	date={1988},
	pages={629-668}
}

\bib{Moerdijk2}{article}{
	title={The classifying topos of a continuous groupoid II},
	journal={Cahiers de Topologie et Geometrie Differentielle Categoriques}
	author={Moerdijk, I.},
	volume={31},
	number={2},
	date={1990},
	pages={137-168}
}

\bib{Moerdijk3}{article}{
	title={Morita Equivalence for Continuous Groups},
	journal={Math. Proc. of the Cambridge Phil. Soc.}
	author={Moerdijk, I.},
	volume={103},
	number={1},
	date={1988},
	pages={97-115}
}

\bib{Moerdijk4}{article}{
	title={Prodiscrete Groups and Galois Toposes},
	journal={Indagationes Mathematicae (Proceedings)}
	author={Moerdijk, I.},
	volume={92},
	number={2},
	date={1989},
	pages={219-234}
}

\bib{Moerdijk5}{article}{
	title={Toposes and Groupoids},
	journal={Categorical Algebra and its Applications} 
	series={Lecture Notes in Mathematics}
	author={Moerdijk, I.},
	volume={1348},
	date={1988},
	pages={280-298}
}

\bib{Mycielski}{article}{
	title={A Lattice of Interpretability Types of Theories},
	journal={The Journal of Symbolic Logic}
	author={J. Mycielski},
	volume={42},
	number={2},
	date={1977},
	pages={297-305}
}

\bib{HoCL}{book}{
	title={Introduction to Higher Order Categorical Logic}
	author={Lambek, J.}
	author={Scott, P. J.}

}

\bib{Elephant}{book}{
	title={Sketches of an Elephant: A Topos Theory Compendium}
	author={Johnstone, P.}
	publisher={Oxford University Press}
	address={Oxford}
	date={2003}
}

\bib{Barrett}{article}{
	title={Morita Equivalence}
	author={Barrett, T.}
	author={Halvorson, H.}
	note={forthcoming}
}

\bib{GamAcz}{article}{
  author={N. Gambino}
  author={P. Aczel}
  title={The generalized type-theoretic interpretation of constructive set theory}
  journal={Journal of Symbolic Logic}
  volume  = {71}
  date={2006}
  pages={67-103}
}

\bib{Halv1}{article}{
  author={H. Halvorson}
  title={What Scientific Theories Could Not Be}
  journal={Philosophy of Science}
  volume  = {79}
  date={2012}
  number={2}
  pages={183-206}
}

\bib{Halv2}{article}{
  author={H. Halvorson}
  title={The Semantic View, if plausible, is syntactic}
  journal={Philosophy of Science}
  volume  = {80}
  date={2013}
  number={3}
  pages={475-478}
}

\bib{Harnik}{article}{
  author={V. Harnik}
  title={Model Theory vs. Categorical Logic: Two approaches to Pretopos completion (a.k.a. $T^{\text{eq}}$)}
  journal={in \emph{Models, Logics and Higher-Dimensional Categories: A Tribute to the Work of Mihaly Makkai}, CRM Proceedings and Lecture Notes}
  date={2011}
  volume={53}
  pages={79-106}
}


\bib{Pinter}{article}{
	author={C. C. Pinter}
	title={Properties Preserved under Definitional Equivalence and Interpretations}
	journal={Mathematical Logic Quarterly}
	volume={24}
	pages={481-488}
	date={1978}
}

\bib{SamVal}{article}{
	author={G. Sambin}
	author={S. Valentini}
	title={Building up a toolbox for Martin-L\"{o}f type theory: subset theory.}
	journal={In:  \emph{Twenty-Five Years of Constructive Type Theory} (eds. G. Sambin and J. M. Smith)}
	editors={L. Crosilla}
	publisher={Oxford University Press}
	pages={221-244}
	date={1998}
}

\bib{Shelah}{book}{
	author={S. Shelah}, 
	title={Classification Theory and the Number of nonisomorphic models},
  	year={1978},
  	publisher = {North Holland, Amsterdam}
}

\end{biblist}
\end{bibdiv}

\end{document}